  \newcommand{\subsubsubsection}{\@startsection{paragraph}{4}{\z@}%
    {1.0\Cvs \@plus.5\Cdp \@minus.2\Cdp}%
    {.1\Cvs \@plus.3\Cdp}%
%    {\reset@font\sffamily\normalsize}
    {\reset@font\it\normalsize}
  }
\newtheorem{thm}{Theorem}
\newtheorem{prop}{Proposition}
\newtheorem{conj}{Conjecture}
\newtheorem{lemma}{Lemma}
\newtheorem{claim}{Claim}
\newtheorem{fact}{Fact}
\newtheorem{prob}{Problem}
\def\erf{\mathop{\mathrm{erf}}}
\def\tp{{\mathsf T}}
\def\N{\mathbb N}
\def\Z{\mathbb Z}
\def\R{\mathbb R}
\def\P{\mathbb P}
\def\AA{\mathcal A}
\def\BB{\mathcal B}
\def\FF{\mathcal F}
\def\GG{\mathcal G}
\def\AK{{\rm AK}}
\def\BD{{\rm BD}}
\def\bd{{\rm bd}}
\def\aa{{\alpha}}
\def\bb{{\beta}}
\title{The maximum measure of non-trivial 3-wise intersecting families}
\author{Norihide Tokushige}\thanks{The author was supported by JSPS KAKENHI 18K03399 and 23K032101.}
\address{Norihide Tokushige\\
University of the Ryukyus\\
1 Senbaru, Okinawa 903-0213 JAPAN}
\email{hide@edu.u-ryukyu.ac.jp}
\keywords{intersecting family, product measure, 
linear programming, weak duality}
\subjclass{05D05, 90C05}
\begin{document}
\maketitle
\begin{abstract}
Let $\GG$ be a family of subsets of an $n$-element set. 
The family $\GG$ is called non-trivial $3$-wise intersecting
if the intersection of any three subsets in $\GG$ is non-empty, but
the intersection of all subsets is empty.
For a real number $p\in(0,1)$ we define the measure of the family 
by the sum of $p^{|G|}(1-p)^{n-|G|}$ over all $G\in\GG$. 
We determine the maximum measure of non-trivial $3$-wise intersecting 
families. We also discuss the uniqueness and stability of the corresponding
optimal structure. These results are obtained by solving linear programming
problems.
\end{abstract}

\section{Introduction}
We determine the maximum measure of non-trivial $3$-wise intersecting
families, and discuss the stability of the optimal structure.
To make the statement precise let us start with some definitions.

Let $n\geq t\geq 1$ and $r\geq 2$ be integers.
For a finite set $X$ let $2^X$ denote the power set of $X$.
We say that a family of subsets $\GG\subset 2^{X}$ is
$r$-wise $t$-intersecting if $|G_1\cap\cdots\cap G_r|\geq t$ for all
$G_1,\ldots,G_r\in\GG$. 
If $t=1$ then we omit $t$ and say an $r$-wise 
intersecting family to mean an $r$-wise $1$-intersecting family.

Let $0<p<1$ be a real number and let $q=1-p$. 
For $\GG\subset 2^{X}$ we define its measure (or $p$-measure) 
$\mu_p(\GG:X)$ by
\[
 \mu_p(\GG:X):=\sum_{G\in\GG}p^{|G|} q^{|X|-|G|}.
\]
We mainly consider the case $X=[n]$, where $[n]:=\{1,2,\ldots,n\}$.
In this case we just write $\mu_p(\GG)$ to mean $\mu_p(\GG:[n])$.

We say that an $r$-wise $t$-intersecting family $\GG\subset 2^{[n]}$ is
non-trivial if $|\bigcap\GG|<t$, where $\bigcap\GG:=\bigcap_{G\in\GG}G$.
Let us denote the maximum $p$-measure of such families by $M_r^t(n,p)$, 
that is,
\[
 M_r^t(n,p):=\max\{\mu_p(\GG):\text{$\GG\subset 2^{[n]}$ is non-trivial
$r$-wise $t$-intersecting}\}.
\]
If a family $\GG\subset 2^{[n]}$ is non-trivial $r$-wise $t$-intersecting,
then so is 
\[
\GG':=\GG\sqcup\{G\sqcup\{n+1\}:G\in\GG\}\subset 2^{[n+1]}. 
\]
Since $\GG$ and $\GG'$ have the same $p$-measure, the function
$M_r^t(n,p)$ is non-decreasing in $n$ for fixed $r,t,p$, and we can define
\[
M_r^t(p):=\lim_{n\to\infty}M_r^t(n,p). 
\]
For simplicity if $t=1$ then we just write $M_r(n,p)$ and $M_r(p)$.

What is generally known about $M_r(n,p)$ and $M_r(p)$?
For the case $r=2$ we have the following.
\begin{align*}
 M_2(p)=\begin{cases}
       p & \text{if } 0<p\leq\frac12,\\
       1 & \text{if } \frac12<p<1.
      \end{cases}
\end{align*}
Indeed it is easy to see that $M_2(n,\frac12)=\frac12$, and
it is known from \cite{AKa} that $M_2(n,p)<p$ for $p<\frac12$.
Thus $M_2(p)\leq p$ for $p\leq\frac12$.
On the other hand we construct a non-trivial $r$-wise intersecting family by
$\FF:=(\{F\in 2^{[n]}:1\in F\}\setminus\{\{1\}\})\cup\{[2,n]\}$,
where $[i,j]:=[j]\setminus[i-1]$.
Then we have $\mu_p(\FF)=p-pq^{n-1}+qp^{n-1}\to p$ as $n\to\infty$.
Thus $M_2(p)=p$ for $p\leq\frac12$.
For the case $p>\frac12$ we construct a non-trivial $r$-wise intersecting 
family $\GG:=\{F\in 2^{[n]}:|F|>n/2\}$.
Then $\mu_p(\GG)=\sum_{k>n/2}\binom nkp^kq^{n-k}\to 1$ as $n\to\infty$,
and so $M_2(p)=1$ for $p>\frac12$.

The case $p=\frac12$ (and arbitrary $r\geq 2$) is also known.
Brace and Daykin \cite{BD} determined the maximum size of non-trivial 
$r$-wise intersecting families. In other words, they determined
$M_r(n,\frac12)$. To state their results we define a non-trivial
$r$-wise intersecting family $\BD_r(n)$ by
\[
 \BD_r(n):=\{F\in 2^{[n]}:|F\cap[r+1]|\geq r\}.
\]
Then $\mu_p(\BD_r(n))=(r+1)p^rq+p^{r+1}$. 

\begin{thm}[Brace and Daykin \cite{BD}]\label{thm:BD}
For $r\geq 2$ we have $M_r(n,\frac12)=\mu_{\frac12}(\BD_r(n))$.
If $r\geq 3$ then $\BD_r(n)$ is the only optimal family (up to isomorphism)
whose measure attains $M_r(n,\frac12)$.
\end{thm}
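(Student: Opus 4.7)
The lower bound $M_r(n,\frac12)\geq\mu_{\frac12}(\BD_r(n))$ can be verified directly. Any $r$ members $F_1,\dots,F_r\in\BD_r(n)$ have each $F_i\cap[r+1]$ missing at most one element of $[r+1]$, so together the $r$ traces miss at most $r$ elements and leave a common element; the $r+1$ sets $([r+1]\setminus\{j\})\cup[r+2,n]$ witness non-triviality; and counting admissible traces on $[r+1]$ gives $\mu_p(\BD_r(n))=(r+1)p^rq+p^{r+1}$.

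For the upper bound, my plan is to combine two reductions with a small linear program. First, replacing $\GG$ by its upward closure $\GG^{\uparrow}$ preserves the $r$-wise intersecting property, preserves non-triviality (since $\bigcap\GG^{\uparrow}\subseteq\bigcap\GG$), and does not decrease the measure, so I may assume $\GG$ is an up-set. Second, shifting (left-compression) preserves the $r$-wise intersecting property, up-set-ness, and the size, hence the measure at $p=\frac12$, but it may destroy non-triviality. I therefore split into two cases depending on whether the shifted up-set $\GG^{*}$ remains non-trivial.

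In Case (a), $\GG^{*}$ is a shifted non-trivial $r$-wise intersecting up-set. Exploiting shifted-ness and up-set-ness, one can reduce the extremal problem to a bounded-size LP whose variables encode the relevant trace structure on a short initial segment (whose length depends only on $r$, not on $n$); constructing matching primal and dual solutions establishes $\mu_{\frac12}(\GG^{*})\leq(r+2)2^{-r-1}$, and for $r\geq 3$ the unique primal optimum corresponds exactly to the Brace--Daykin profile $|F\cap[r+1]|\geq r$. In Case (b), shifting introduces a common element in $\GG^{*}$; here the trivial bound $\mu(\GG^{*})\leq p=\frac12$ is too weak when $r\geq 3$, so one must exploit the shift-blocking obstructions inside the unshifted $\GG$ (pairs $G,\,(G\setminus\{j\})\cup\{1\}$ both in $\GG$ that force the shift $s_{1j}$ to be blocked) to refine the estimate down to $(r+2)2^{-r-1}$.

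The main obstacle is precisely this two-case analysis, and within it Case (b), since the naive containment bound fails and one must extract quantitative information from the obstructions to shifting. The hypothesis $r\geq 3$ plays its essential role in the uniqueness part of Case (a): for $r=2$ the family $\FF$ from the introduction attains the same measure as $\BD_2(n)$ without being isomorphic to it, so the LP must have a higher-dimensional optimal face; for $r\geq 3$ one has to verify that the LP pins down a unique extremal primal, and this is where I expect the argument to be most technically delicate.
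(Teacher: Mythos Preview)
The paper does not prove Theorem~\ref{thm:BD}; it is quoted as a known result of Brace and Daykin \cite{BD}, so there is no proof in the paper to compare your proposal against.

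As for the proposal itself, it is a plan rather than a proof, and the gap you yourself flag in Case~(b) is genuine and unresolved: if shifting creates a common element, the containment bound $\mu_{1/2}(\GG^{*})\leq\tfrac12$ is far too weak, and ``exploit the shift-blocking obstructions'' is not an argument. A cleaner route that sidesteps Case~(b) entirely is to use the observation recorded in the paper just before Conjecture~1: a non-trivial $r$-wise intersecting family is automatically $(r-1)$-wise $2$-intersecting, and \emph{this} property is preserved by shifting. Thus you may shift freely and work with a shifted, inclusion-maximal family that is simultaneously $r$-wise intersecting and $(r-1)$-wise $2$-intersecting; the latter condition replaces non-triviality and survives the reduction. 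Your Case~(a) is also only a sketch: ``a bounded-size LP whose variables encode the relevant trace structure'' needs to be written down explicitly, with the constraints derived and the primal/dual pair exhibited, before it constitutes a proof.
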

\noindent
Here two families $\FF,\GG\subset 2^{[n]}$ are isomorphic if there is 
a permutation $\tau$ on $[n]$ such that 
$\FF=\{\{\tau(g):g\in G\}:G\in\GG\}$. In this case we write $\FF\cong\GG$. 

The other thing we know is about the case $p$ close to $\frac12$. 
In this case we can extend Theorem~\ref{thm:BD} if $r\geq 8$ as follows.
\begin{thm}[\cite{T2008}]
Let $r\geq 8$. Then there exists $\epsilon=\epsilon(r)>0$ such that
$M_r(n,p)=\mu_p(\BD_r(n))$ for $|p-\frac12|<\epsilon$, and 
$\BD_r(n)$ is the only optimal family (up to isomorphism).
\end{thm}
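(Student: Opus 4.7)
The plan is to reduce the problem, via the standard shifting technique, to shifted (compressed) families: shifting preserves both the $r$-wise intersecting property and the $p$-measure of $\GG$, and non-triviality can be tracked by watching the minimal generators. Hence I would first argue that we may assume $\GG$ is shifted and still non-trivial $r$-wise intersecting.

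Next, I would analyze $\GG$ through its set of minimal elements, i.e.\ its basis. For a shifted $r$-wise intersecting family the basis is itself shifted, and a short argument shows that each minimal generator has size bounded by a constant depending only on $r$. Consequently, up to relabeling of coordinates there are only finitely many possible basis configurations, a finite list depending on $r$ but not on $n$. For each such type $i$ one obtains an upper bound
\[
 \mu_p(\GG)\le P_i(p)+o_n(1),
\]
where $P_i(p)$ is a polynomial in $p$ of degree depending only on $r$.

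Now Theorem~\ref{thm:BD} together with its uniqueness clause (which uses $r\geq 3$) gives $P_i(\tfrac12)<\mu_{\tfrac12}(\BD_r(n))$ for every basis type $i$ different from that of $\BD_r(n)$. Since there are only finitely many polynomials $P_i$, elementary continuity produces a common $\epsilon=\epsilon(r)>0$ such that the strict inequality $P_i(p)<\mu_p(\BD_r(n))$ persists on $|p-\tfrac12|<\epsilon$, yielding both the extremal value $M_r(n,p)=\mu_p(\BD_r(n))$ and the uniqueness of $\BD_r(n)$ on this neighborhood.

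The main obstacle will be justifying the hypothesis $r\geq 8$. The continuity argument above is uniform in $n$ only if one has genuine control over how closely the $P_i$ can approach $\mu_p(\BD_r(n))$ near $p=\tfrac12$. The natural near-BD competitors are obtained by swapping a bounded number of generators in $\BD_r(n)$, and a direct calculation of the resulting polynomials shows that the first-order gap at $p=\tfrac12$ has the correct sign precisely when $r\geq 8$; for smaller $r$, certain modifications of $\BD_r(n)$ overtake it on one side of $\tfrac12$. This last verification is naturally phrased as a linear program on the finite-dimensional polytope of shifted basis types, in the spirit of the LP approach announced in the abstract, and non-degeneracy of the BD vertex for $r\geq 8$ then furnishes the desired $\epsilon(r)$.
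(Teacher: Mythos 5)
This theorem is not proved in the present paper at all: it is quoted as background from \cite{T2008}, so there is no in-paper argument to compare against, and your sketch has to stand on its own. It does not. The central structural claim --- that a shifted non-trivial $r$-wise intersecting family has all minimal generators of size bounded by a constant depending only on $r$, hence only finitely many basis types independent of $n$ --- is false. For example, $\GG=\{F\subset[n]:|F|>\tfrac{r-1}{r}n\}$ is shifted and $r$-wise intersecting, and its minimal sets have size growing linearly in $n$; more generally the competitors near $p=\tfrac12$ include AK-type frames $\{F:|F\cap[r+2i-1]|\geq r+i-1\}$ whose supports grow without bound. Such families happen to have small measure near $p=\tfrac12$, but that is exactly the quantitative fact one must prove (in \cite{T2008} this is done via shifting plus random-walk estimates of the kind recorded in Section~2 here), and it cannot be replaced by ``finitely many polynomials $P_i$ plus continuity.'' Relatedly, your $\epsilon$ must be uniform in $n$ while your bounds carry $o_n(1)$ errors over an $n$-dependent collection of configurations; acknowledging this obstacle is not the same as overcoming it.

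The second gap is the justification of the hypothesis $r\geq 8$. You assert that a first-order computation at $p=\tfrac12$ shows the gap has the correct sign ``precisely when $r\geq 8$,'' but this is both unproved and inconsistent with what is known: the paper itself notes that explicit constructions beat $\BD_r(n)$ near $p=\tfrac12$ only for $r\leq 5$, and that \cite{T2008} conjectures the theorem remains true for $r=6,7$. So $r\geq 8$ is not a sharp threshold detectable by a sign check on near-BD perturbations; it is where the technical estimates of the actual proof close, and your argument supplies no substitute for those estimates. Finally, the uniqueness clause would need its own stability-type argument (strict inequality for every non-BD family for all $n$, not just in the limit), which your outline does not address.
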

\noindent
In \cite{T2008} it is conjectured the same holds for $r=6$ and $7$ as well.
On the other hand there is a construction showing that
if $r\leq 5$ then $M_r(n,p)>\mu_p(\BD_r(n))$ for $p$ close to $1/2$.

In this paper we focus on the case $r=3$.
We determine $M_3(p)$ for all $p$.

\begin{thm}\label{thm2}
For non-trivial $3$-wise intersecting families we have
\begin{align*}
 M_3(p)=\begin{cases}
       p^2 & \text{if } p\leq\frac13,\\
       4p^3q+p^4 & \text{if } \frac13\leq p\leq\frac12,\\
       p & \text{if } \frac12<p\leq\frac23,\\
       1 & \text{if } \frac23<p<1.
      \end{cases}
\end{align*}
\end{thm}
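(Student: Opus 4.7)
The plan is to establish Theorem~\ref{thm2} by matching a lower and an upper bound in each of the four $p$-ranges. For the \emph{lower bound} I will exhibit, in each range, a non-trivial $3$-wise intersecting family realising the claimed value as $n\to\infty$. For $p\leq\tfrac13$ the family
\[
\AA=\{G\subset[n]:\{1,2\}\subset G,\,G\neq\{1,2\}\}\cup\{[3,n]\cup\{1\},\,[3,n]\cup\{2\}\}
\]
is routinely $3$-wise intersecting and non-trivial (for $n\geq 4$) and satisfies $\mu_p(\AA)=p^2-p^2q^{n-2}+2p^{n-1}q\to p^2$. For $\tfrac13\leq p\leq\tfrac12$ the Brace--Daykin family $\BD_3(n)$ has measure $4p^3q+p^4$. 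For $\tfrac12<p\leq\tfrac23$ I will take
\[
\BB=\{G\subset[n]:1\in G,\,|G|\geq (n+3)/2\}\cup\{[2,n]\};
\]
any two sets $G_2,G_3$ of size $\geq(n+3)/2$ containing $1$ satisfy $|G_2\cap G_3\cap[2,n]|\geq 2$, so $\BB$ is $3$-wise intersecting, and $\mu_p(\BB)\to p$ for $p>\tfrac12$ by the law of large numbers. For $p>\tfrac23$ the family $\{G:|G|>2n/3\}$ is $3$-wise intersecting because $|G_1\cap G_2\cap G_3|\geq|G_1|+|G_2|+|G_3|-2n>0$, is non-trivial (e.g.\ $[n]\setminus\{i\}$ has size $n-1>2n/3$), and its measure tends to $1$.

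For the \emph{upper bound}, which carries the weight of the proof, I intend to use the standard shifting-plus-linear-programming framework. The left-shift operations preserve $\mu_p$ and the $3$-wise intersecting property, and non-triviality can be preserved after at most one extra coordinate (via the extension $\GG'$ recalled in the introduction), so one may assume $\GG$ is shifted. Partitioning $\GG$ by its trace on a short window $[k]$ (I would start with $k=4$, matching $\BD_3$) gives
\[
\mu_p(\GG)=\sum_{S\subset[k]}p^{|S|}q^{k-|S|}\mu_p(\GG_S),\qquad\GG_S=\{G\cap[k+1,n]:G\in\GG,\,G\cap[k]=S\}.
\]
Shiftedness imposes monotone containment relations on the $\GG_S$; the $3$-wise intersecting hypothesis forces, for every triple $S_1,S_2,S_3\subset[k]$ with $S_1\cap S_2\cap S_3=\emptyset$, that any three sets drawn respectively from $\GG_{S_1},\GG_{S_2},\GG_{S_3}$ meet in $[k+1,n]$; non-triviality forbids that every $G\in\GG$ share a fixed element of $[k]$. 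Bounding each $\mu_p(\GG_S)$ either by $1$, by the value of $M_2(p)$ recalled in the introduction, or by $M_3(n-k,p)$ inductively, each feasible combinatorial configuration becomes a finite linear program in the variables $\mu_p(\GG_S)$ whose optimum can be read off directly.

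The main obstacle, I expect, is the case analysis: one must enumerate the feasible trace patterns for a shifted non-trivial $3$-wise intersecting family, solve each resulting LP, and extract the pointwise maximum over cases. The four pieces of the piecewise formula will correspond to four trace patterns each becoming dominant in its own $p$-range, and the discontinuities of $M_3(p)$ at $p=\tfrac12$ and $p=\tfrac23$ will arise from a change of the dominant pattern rather than from a continuous deformation within a single pattern. A subsidiary difficulty is to choose $k$ small enough to keep the enumeration tractable yet large enough that all four lower-bound constructions are visible in the trace on $[k]$; $k=4$ should suffice, since the $\BD_3$-extremiser lives on a $4$-element kernel while the other three constructions have kernels of size at most $4$.
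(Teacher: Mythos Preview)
Your lower-bound constructions are essentially those of the paper and are fine. The upper-bound plan, however, has genuine gaps.

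First, the paper does not use a single LP-on-traces argument across all four ranges. For $p<\tfrac13$ it simply observes that a non-trivial $3$-wise intersecting family is $2$-wise $2$-intersecting and invokes Ahlswede--Khachatrian (Theorem~\ref{thm:AK}) to get $\mu_p\leq p^2$ in one line. For $\tfrac12<p\leq\tfrac23$ the bound $\mu_p\leq p$ is quoted from prior work (\cite{FGL,FT2003,T2021}) on arbitrary, not just non-trivial, $3$-wise intersecting families; this is a substantial external input that a trace LP will not recover on its own. Second, in the range where an LP argument \emph{is} used---and in fact only for $\tfrac25\leq p\leq\tfrac12$, since Ahlswede--Khachatrian already handles $[\tfrac13,\tfrac25)$ via $\AA(n,2,1)=\BD_3(n)$---your proposed constraints are far too weak. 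The paper partitions on $[3]$ (not $[4]$) and the key point is that for each triple $I_1,I_2,I_3\subset[3]$ with $I_1\cap I_2\cap I_3=\emptyset$ the families $\GG_{I_1},\GG_{I_2},\GG_{I_3}$ are $3$-cross $\tau$-intersecting with $\tau=7-|I_1|-|I_2|-|I_3|$ ranging up to $7$; these are then controlled by the random-walk bound (Lemma~\ref{la2}), the sum bound of Gupta--Mogge--Piga--Sch\"ulke (Theorem~\ref{thm4}), Ahlswede--Khachatrian for $t=2,3,4,5$, and a separately proved stability result for $3$-cross intersecting families (Theorem~\ref{thm3}). The resulting LPs are closed by exhibiting explicit dual feasible solutions across five structural cases; this is the main content of the paper (Theorem~\ref{thm:main}). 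Bounding $\mu_p(\GG_S)$ only by $1$, by $M_2(p)$, or ``by $M_3(n-k,p)$ inductively'' would not close any of these LPs, and the last option is both circular (you are computing $M_3$) and inapplicable (the $\GG_S$ need not be non-trivial). Finally, your claim that the extension $\GG'$ lets one preserve non-triviality under shifting misreads its role: $\GG'$ establishes monotonicity of $M_3(n,p)$ in $n$, but does nothing to stop a shift from making the family trivial; the paper handles the passage to shifted families via Lemma~\ref{shift-stable}.
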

\begin{figure}
\includegraphics[width=10cm]{./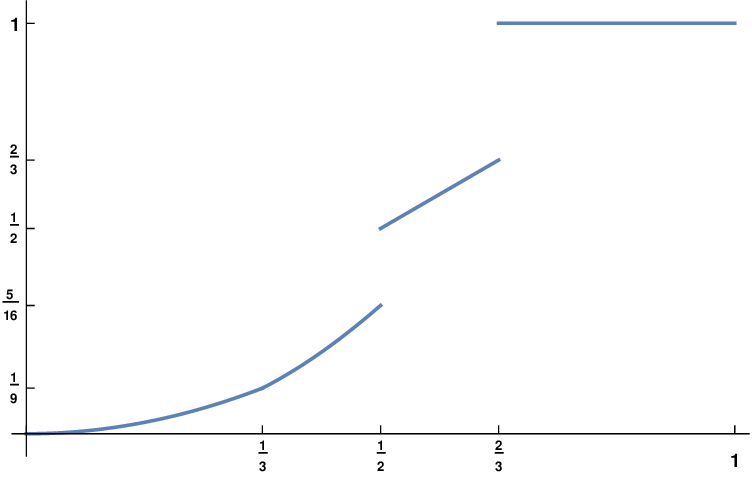}  
\caption{The graph of $M_3(p)$}\label{fig7}
\end{figure}
\noindent
In case $M_2(p)$ there is a jump at $p=\frac12$. In case $M_3(p)$ 
there are two jumps at $p=\frac12$ and $p=\frac 23$ as in 
Figure~\ref{fig7}, and $M_3(p)$ is continuous at $p=\frac13$ 
but not differentiable at this point. We also note that
$\mu_p(\BD_3(n))=4p^3q+p^4$. 

The most interesting part is the case $\frac13\leq p\leq\frac12$.
In this case we determine $M_3(n,p)$ and the corresponding optimal
structure.
\begin{thm}\label{thm:M3(n,p)}
Let $\frac13\leq p\leq\frac12$. Then we have $M_3(n,p)=\mu_p(\BD_3(n))$. 
Moreover, $\BD_3(n)$ is the only optimal family (up to isomorphism), 
that is, if $\FF\subset 2^{[n]}$ is a non-trivial $3$-wise intersecting
family with $\mu_p(\FF)=M_3(n,p)$ then $\FF\cong\BD_3(n)$.
\end{thm}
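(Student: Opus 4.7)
My plan is to shift $\FF$, partition the shifted family by its trace on a canonical $4$-element set, and solve a linear program on the resulting $16$-variable profile whose optimum realizes $\mu_p(\BD_3(n))$ only at the profile of $\BD_3(n)$ itself.

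First I would apply the usual $(i,j)$-shifts to $\FF$; these preserve $\mu_p$ and the $3$-wise intersecting property, though they may destroy non-triviality. If the shifted family $\FF^*$ is still non-trivial, proceed to the main step. Otherwise $\bigcap \FF^* = \{1\}$, while the original $\FF$ contains some $F_0$ with $1 \notin F_0$; every pair $F, F' \in \FF$ must then satisfy $F \cap F' \cap F_0 \neq \emptyset$, so the traces of $\FF$ on $F_0$ form a $2$-wise intersecting family on $F_0$. The bound $M_2(p) \leq p$ recalled in the introduction, together with $\mu_p(\{F : 1 \in F\}) = p$, yields a bound on $\mu_p(\FF)$ strictly smaller than $4p^3 q + p^4$ throughout $\frac13 \leq p \leq \frac12$, ruling out this degenerate case.

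In the main case assume $\FF$ is shifted and non-trivial. For each $T \subseteq [4]$ set $\FF(T) := \{S \subseteq [5,n] : T \cup S \in \FF\}$ and $a_T := \mu_p(\FF(T)) \in [0,1]$, so that
\[
\mu_p(\FF) = \sum_{T \subseteq [4]} p^{|T|} q^{4-|T|} a_T.
\]
Shiftedness yields the monotonicity $a_T \leq a_{T'}$ whenever $T \subseteq T'$, non-triviality of $\FF$ forces $a_{[4] \setminus \{j\}} > 0$ for every $j \in [4]$, and the $3$-wise intersecting condition applied to triples $(T_1, T_2, T_3)$ of subsets of $[4]$ with $T_1 \cap T_2 \cap T_3 = \emptyset$ forces the subfamilies $\FF(T_1), \FF(T_2), \FF(T_3)$ to be $3$-cross-intersecting on $[5,n]$. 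Combined with the self-referential bound $a_\emptyset \leq M_3(n-4,p)$ coming from the fact that $\FF(\emptyset)$ is itself $3$-wise intersecting on $[5,n]$, this gives a finite LP. Computing it (inductively in $n$, or passing to the limit and invoking Theorem~\ref{thm2}) should give the maximum $4p^3 q + p^4$ throughout $\frac13 \leq p \leq \frac12$, attained only at the profile with $a_T = 1$ for $|T| \geq 3$ and $a_T = 0$ otherwise, which matches $\BD_3(n)$.

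The main obstacle is that the cross-intersection requirements on the triples $(\FF(T_1), \FF(T_2), \FF(T_3))$ are not literally linear in the $a_T$; converting them into sharp scalar inequalities requires, for each triple with empty intersection in $[4]$, either a direct use of $M_2(p) \leq p$ on a pairwise-cross-intersecting slice or an inductive application of $M_3(n-4,p)$ itself. Uniqueness is then a matter of inspecting which LP constraints are tight at the optimum: these force the $a_T$ to take the extreme values above, and the free choice of the distinguished $4$-set together with shiftedness identifies $\FF$ with $\BD_3(n)$ up to isomorphism.
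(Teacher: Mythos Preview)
Your outline has the right architecture---shift, partition by trace, solve an LP---and this is indeed what the paper does (partitioning on $[3]$ rather than $[4]$). But several steps, as written, do not go through.

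\textbf{The degenerate case is wrong.} If shifting makes $\bigcap\FF^*=\{1\}$, your trace argument on $F_0$ together with $M_2(p)\leq p$ yields at best $\mu_p(\FF)\leq p$, and $p>4p^3q+p^4$ throughout $[\tfrac13,\tfrac12]$, so this rules out nothing. The correct observation is that a non-trivial $3$-wise intersecting family is $2$-wise $2$-intersecting, a property preserved by shifting; then $\FF^*\subset\{F:1\in F\}$ with $\{F\setminus\{1\}\}$ intersecting gives $\mu_p(\FF^*)\leq p^2\leq 4p^3q+p^4$.

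\textbf{The induction is circular.} You propose bounding $a_\emptyset$ by $M_3(n-4,p)$ ``inductively,'' or alternatively ``passing to the limit and invoking Theorem~\ref{thm2}.'' But Theorem~\ref{thm2} is deduced from Theorem~\ref{thm:M3(n,p)} in the paper, and an induction on $n$ with no base case and no explicit gap constant does not close.

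\textbf{The LP constraints you name are too weak.} Converting $3$-cross intersection into scalar inequalities via $M_2(p)\leq p$ alone is not nearly enough to pin the optimum at $4p^3q+p^4$. The paper's LP (Section~5, proving Theorem~\ref{thm:main}) needs, beyond the poset inequalities, the Ahlswede--Khachatrian bounds $\tilde a_t$ for $2$-wise $t$-intersecting slices with $t$ up to $5$, the random-walk bound $\tilde\alpha$ from Lemma~\ref{la2}, and the sum bound $\sum\mu_p(\FF_i)\leq 3p-\epsilon_p$ of Theorem~\ref{thm3} (itself an LP). Even with all of this, the LP only closes on $[\tfrac25,\tfrac12]$; the paper handles $[\tfrac13,\tfrac25)$ separately by a direct appeal to Ahlswede--Khachatrian, since there $\BD_3(n)=\AA(n,2,1)$ is already the optimal $2$-wise $2$-intersecting family. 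Your single LP over $[\tfrac13,\tfrac12]$ with the constraints you list will not be tight, and you have not shown otherwise.

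\textbf{Uniqueness.} ``Inspecting tight constraints'' is not a proof; the paper gets uniqueness on $[\tfrac25,\tfrac12]$ from the strict gap in Theorem~\ref{thm:main} plus Lemma~\ref{shift-stable} to undo the shifting, and on $[\tfrac13,\tfrac25)$ from the uniqueness clause of the Ahlswede--Khachatrian theorem.
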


We also consider the stability of the optimal family for 
$\frac13\leq p\leq\frac12$. Roughly speaking
we will claim that if a non-trivial 3-wise intersecting family has measure
close to $M_3(n,p)$ then the family is close to $\BD_3(n)$ in structure.
A similar result is known for 2-wise $t$-intersecting families.
For comparison with our case let $\frac13<p<\frac25$ and $t=2$.
Note that $\BD_3(n)$ is a 2-wise 2-intersecting family.
If $\FF\subset 2^{[n]}$ is a 2-wise 2-intersecting family, then 
it follows from the Ahlswede--Khachatrian theorem (see Theorem~\ref{thm:AK})
that $\mu_p(\FF)\leq\mu_p(\BD_3(n))$.
Moreover, if $\mu_p(\FF)$ is close to $\mu_p(\BD_3(n))$ then $\FF$
is close to $\BD_3(n)$. This follows from a stability result (corresponding
to Theorem~\ref{thm:AK}) proved by Ellis, Keller, and Lifshitz.
Here we include a version due to Filmus applied to the case
$\frac13<p<\frac25$ and $t=2$.
\begin{thm}[Ellis--Keller--Lifshitz \cite{EKL}, 
Filmus \cite{Filmus}]\label{thm:EKL}
Let $\frac13<p<\frac25$. There is a constant $\epsilon_0=\epsilon_0(p)$ 
such that the following holds. If $\FF\subset 2^{[n]}$ is a $2$-wise 
$2$-intersecting family with $\mu_p(\FF)=\mu_p(\BD_3(n))-\epsilon$, 
where $\epsilon<\epsilon_0$, then there is a family $\GG\cong\BD_3(n)$ 
such that $\mu_p(\FF\triangle\GG)=O(\epsilon)$,
where the hidden constant depends on $p$ only.
\end{thm}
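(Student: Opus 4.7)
The plan is to combine a general junta approximation theorem for $p$-biased Boolean functions with the Ahlswede--Khachatrian classification of $2$-wise $t$-intersecting families. First I would establish a junta approximation step: for any closed subinterval of $(0,1)$ containing $p$, any $2$-wise $2$-intersecting family $\FF\subset 2^{[n]}$ whose $p$-measure is within $\epsilon$ of the extremal value must be $O(\epsilon)$-close in $\mu_p$-distance to a family $\HH$ that depends only on a set $J\subseteq[n]$ of bounded size $j=j(p)$ independent of $n$. This step naturally passes through the $p$-biased Fourier expansion of the characteristic function of $\FF$ together with hypercontractive estimates in the biased setting, which show that near-extremal $\FF$ concentrates its spectral mass on low-degree characters and hence on $O(1)$ coordinates. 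Filmus' alternative approach in \cite{Filmus} replaces the Fourier machinery by a shifting plus removal-lemma scheme tailored to $t$-intersecting families.

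Next I would use the bounded-size junta to identify $\HH$ exactly. After a small local modification, $\HH$ may be taken to be itself $2$-wise $2$-intersecting when viewed as a family on $J$. Only finitely many such families exist on a bounded ground set (up to isomorphism), so the problem becomes finite-dimensional. In the open range $\frac13<p<\frac25$, the Ahlswede--Khachatrian theorem implies that $\BD_3(n)$ is the strict unique maximizer of $\mu_p$ among $2$-wise $2$-intersecting families, separated by a positive gap $\delta(p)>0$ from every other Frankl family $\{F:|F\cap[2+2i]|\geq 2+i\}$ with $i\neq 1$. Choosing $\epsilon_0(p)$ smaller than $\delta(p)$ divided by the junta-approximation constant forces $\HH\cong\BD_3(n)$.

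For the quantitative bound $\mu_p(\FF\triangle\HH)=O(\epsilon)$, I would exploit the fact that the portion of $\FF$ symmetric-differing from a copy of $\BD_3(n)$ loses $\mu_p$ at a rate linear in the symmetric difference, since $\BD_3(n)$ is a strict local maximum under the $2$-wise $2$-intersecting constraint. Combining this local linearity with the hypothesis $\mu_p(\FF)\geq\mu_p(\BD_3(n))-\epsilon$ yields the desired linear dependence. The linearity in turn follows from an inspection of the short-range Frankl families together with the fact that replacing an element of $\BD_3(n)$ by an element outside strictly decreases $\mu_p$ by an amount bounded below by a positive multiple of $p^3q$.

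The hard part will be the junta approximation step: achieving a junta of size independent of $n$ with \emph{linear} (rather than merely polynomial) dependence on $\epsilon$. A straightforward Friedgut--Kalai--Naor style argument delivers only $O(\sqrt\epsilon)$; sharpening to $O(\epsilon)$ requires either the refined biased-hypercontractivity spectral analysis of Ellis--Keller--Lifshitz, which tracks the second-level spectrum carefully, or Filmus' delicate combinatorial removal argument. In either route the restriction to the open interval $\frac13<p<\frac25$ is essential, since this is precisely the range where the Ahlswede--Khachatrian extremum is uniquely attained by $\BD_3(n)$ with a positive gap; at the endpoints two Frankl families tie and one should expect only $O(\sqrt\epsilon)$ stability.
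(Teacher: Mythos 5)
First, note that the paper does not prove Theorem~\ref{thm:EKL} at all: it is quoted as a known external result of Ellis--Keller--Lifshitz and Filmus, so there is no internal proof to compare against. Judged on its own terms, your proposal correctly identifies the general strategy used in those works (junta approximation for near-extremal $2$-wise $2$-intersecting families, identification of the junta via the uniqueness part of Theorem~\ref{thm:AK} in the open range $\frac13<p<\frac25$, then a bootstrapping step to get linear dependence on $\epsilon$), but it is not a proof: the two genuinely hard ingredients --- the bounded junta approximation and the sharpening from $O(\sqrt\epsilon)$ to $O(\epsilon)$ --- are exactly the content of the cited papers, and your sketch defers to them rather than establishing them. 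As an attribution of the theorem this is fine; as a blind proof it is circular.

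Moreover, the one concrete mechanism you do offer for the linear bound is wrong as stated. You claim that ``replacing an element of $\BD_3(n)$ by an element outside strictly decreases $\mu_p$ by an amount bounded below by a positive multiple of $p^3q$,'' and that $\BD_3(n)$ is a strict local maximum in this per-set sense. The paper's own example \eqref{2w2i not 3w1i} refutes this: one may delete the two sets $\{1,3,4\},\{2,3,4\}$ from $\BD_3(n)$ and add the set $[n]\setminus\{3,4\}$, which lies outside every isomorphic copy of $\BD_3(n)$, while preserving the $2$-wise $2$-intersecting property; the resulting change in measure tends to $0$ as $n\to\infty$, not a quantity bounded below by a constant multiple of $p^3q$. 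The correct route to linearity (in EKL and in Filmus) is a bootstrapping argument in which the measure of the part of $\FF$ outside the extremal family is played off, via cross-intersecting constraints, against the measure of the part of the extremal family that $\FF$ must then miss; the loss is linear in the $\mu_p$-measure of the symmetric difference, not in the number of swapped sets. So your outline points at the right literature and the right overall architecture, but both the junta step and the $O(\epsilon)$ step remain unproved in your write-up, and the local-linearity justification you substitute for the latter is false.
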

\noindent
We note that the condition $\mu_p(\FF\triangle\GG)=O(\epsilon)$ 
in Theorem~\ref{thm:EKL} cannot be replaced with the condition 
$\FF\subset\GG$. To see this, consider a 2-wise 2-intersecting family
\begin{align}\label{2w2i not 3w1i}
 \FF=\left(\BD_3(n)\setminus\{\{1,3,4\},\{2,3,4\}\}\right)\cup
\{[n]\setminus\{3,4\}\}.
\end{align}
Then $\mu_p(\FF)=\mu_p(\BD_3(n))-2p^3q^{n-3}+p^2q^{n-2}\to\mu_p(\BD_3(n))$
as $n\to\infty$, but $\FF$ is not contained in $\BD_3(n)$ (or any 
isomorphic copy of $\BD_3(n)$). 

Note that a non-trivial $r$-wise $t$-intersecting family is necessarily an 
$(r-1)$-wise $(t+1)$-intersecting family. (Otherwise there are $r-1$ 
subsets whose intersection is of size exactly $t$, and so all subsets
contain the $t$ vertices to be $r$-wise $t$-intersecting,
which contradicts the non-trivial condition.)
Thus Theorem~\ref{thm:EKL} also holds 
if we replace the assumption that $\FF$ is a 2-wise 2-intersecting family 
with the assumption that $\FF$ is a non-trivial 3-wise intersecting family.
We also note that the family $\FF$ defined by \eqref{2w2i not 3w1i}
is 2-wise 2-intersecting, but not 3-wise intersecting. 
This suggests a possibility of a stronger stability 
for non-trivial 3-wise intersecting families than
2-wise 2-intersecting families.
\begin{conj}
Let $\frac13<p\leq\frac12$. There is a constant $\epsilon_0=\epsilon_0(p)$ 
such that the following holds. If $\FF\subset 2^{[n]}$ is a non-trivial
$3$-wise intersecting family with $\mu_p(\FF)=\mu_p(\BD_3(n))-\epsilon$, 
where $\epsilon<\epsilon_0$, then there is a family $\GG\cong\BD_3(n)$ 
such that $\FF\subset\GG$.
\end{conj}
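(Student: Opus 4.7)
The plan is to use the 2-wise 2-intersecting stability result Theorem~\ref{thm:EKL} as a starting point (available because every non-trivial 3-wise intersecting family is automatically 2-wise 2-intersecting) and then upgrade its $O(\epsilon)$-distance conclusion to exact containment by exploiting the strictly stronger 3-wise condition. First I would apply it to obtain $\GG\cong\BD_3(n)$ with $\mu_p(\FF\triangle\GG)\le C\epsilon$ for some constant $C=C(p)$; after relabelling I may assume $\GG=\BD_3(n)$. It then suffices to show that $\FF\setminus\GG=\emptyset$ once $\epsilon<\epsilon_0(p)$.

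Suppose for contradiction $F\in\FF\setminus\GG$, so $|F\cap[4]|\le 2$. By the symmetry of $\BD_3(n)$ on $[4]$ I may assume $F\cap[4]\in\{\emptyset,\{1\},\{1,2\}\}$. I would focus on the emblematic case $F\cap[4]=\{1,2\}$, which is precisely the failure mode of the 2-wise 2-intersecting example in \eqref{2w2i not 3w1i}. For this $F$, I seek witnesses $G_1,G_2\in\FF$ of the form $G_1=\{1,3,4\}\cup A_1$ and $G_2=\{2,3,4\}\cup A_2$ with $A_1,A_2\subseteq[5,n]$. A direct computation gives $F\cap G_1\cap G_2 = A_1\cap A_2\cap R$ where $R:=F\cap[5,n]$, so any pair with $A_1\cap A_2\cap R=\emptyset$ produces a forbidden triple $\{F,G_1,G_2\}$. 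The remaining sub-cases $F\cap[4]\in\{\emptyset,\{1\}\}$ are handled analogously using different 3-element subsets of $[4]$.

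To exhibit such a pair, let $\WW_1:=\{G\in\GG:G\cap[4]=\{1,3,4\}\}$ and $\WW_2:=\{G\in\GG:G\cap[4]=\{2,3,4\}\}$; each has $\mu_p(\WW_i)=p^3q$, and the stability estimate $\mu_p(\GG\setminus\FF)\le C\epsilon$ yields $\mu_p(\WW_i\cap\FF)\ge p^3q-C\epsilon$. Under the natural $p$-biased product measure on $(A_1,A_2)\in 2^{[5,n]}\times 2^{[5,n]}$, the event $\{G_1\in\FF,\,G_2\in\FF\}$ has probability at least $1-2C\epsilon/(p^3q)$, while the independent avoidance event $\{A_1\cap A_2\cap R=\emptyset\}$ has probability $(1-p^2)^{|R|}$. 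A union bound produces the desired pair whenever $(1-p^2)^{|R|}>2C\epsilon/(p^3q)$, which holds with $\epsilon_0=\epsilon_0(p)$ independent of $n$ provided $|R|$ is bounded by a constant $k_0(p)$.

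The main obstacle is the regime $|R|=\Omega(n)$, where the avoidance probability collapses exponentially in $n$ and the naive $\epsilon_0$ shrinks with $n$. Removing this dependence is the hard step. Possible routes include iterating the witness construction over a richer collection of star-pairs so that at least one avoidance event succeeds regardless of the position of $R$; promoting Theorem~\ref{thm:EKL} to a junta-type stability that forces every $F\in\FF\setminus\GG$ to depend on boundedly many coordinates, capping $|R|$ a priori; or a compression/shifting reduction carefully designed to preserve the non-trivial 3-wise intersecting property and concentrate any outlier $F$ onto a bounded coordinate set. I expect this to be the main difficulty and presumably the reason the paper leaves the statement as a conjecture.
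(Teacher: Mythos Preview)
The statement you are addressing is labelled a \emph{Conjecture} in the paper and is not proved there. The paper's only progress towards it is Theorem~\ref{thm:main}, which establishes the conclusion for \emph{shifted} families in the narrower range $\tfrac25\le p\le\tfrac12$, via an entirely different route: it partitions $\FF$ according to traces on $[3]$, extracts a web of cross-intersection constraints (Claim~\ref{M} and Claims~\ref{claim:g1}--\ref{claim:gempty}), linearises them, and bounds $\mu_p(\FF)$ by exhibiting feasible dual LP solutions. No use is made of the Ellis--Keller--Lifshitz stability, and the shiftedness hypothesis is essential throughout. Your approach, by contrast, tries to work directly with general (unshifted) families and to bootstrap from Theorem~\ref{thm:EKL}; this is a genuinely different line of attack, and if it could be completed it would give more than the paper proves.

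There are, however, two genuine gaps. First, a range issue: Theorem~\ref{thm:EKL} as stated in the paper is for $\tfrac13<p<\tfrac25$ (this is where $\AA(n,2,1)=\BD_3(n)$ is the \emph{unique} optimal $2$-wise $2$-intersecting configuration in the Ahlswede--Khachatrian theorem). For $\tfrac25\le p\le\tfrac12$ the Ahlswede--Khachatrian optimum is a different family $\AA(n,2,i)$ with $i\ge 2$, so you cannot invoke an EKL-type stability towards $\BD_3(n)$ on that interval without a substitute argument. Thus your very first step is unavailable on most of the conjectured range. Second, the obstacle you yourself isolate is real and you do not resolve it: when the outlier $F$ has $|R|=|F\cap[5,n]|$ growing with $n$, the avoidance probability $(1-p^2)^{|R|}$ decays exponentially while your loss term $2C\epsilon/(p^3q)$ is fixed, so the union bound fails and your $\epsilon_0$ is forced to depend on $n$. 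None of the remedies you list (richer witness families, junta-stability, tailored compressions) is carried out, and each is nontrivial; in particular, shifting can destroy the non-triviality of a $3$-wise intersecting family, which is precisely why the paper imposes shiftedness as a hypothesis rather than reducing to it. As written, your proposal is a reasonable heuristic outline but not a proof, and it does not recover even the partial result the paper establishes.
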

We verify the conjecture for the case $\frac25\leq p\leq\frac12$ 
provided that the family is shifted.
Here we say that a family $\FF\subset 2^{[n]}$ is shifted if
$F\in\FF$ and $\{i,j\}\cap\FF=\{j\}$ for some $1\leq i<j\leq n$, then
$(F\setminus\{j\})\cup\{i\}\in\FF$. The following is our main result in
this paper.
\begin{thm}\label{thm:main}
Let $\frac25\leq p\leq\frac12$, and let $\FF\subset 2^{[n]}$ be a shifted
non-trivial $3$-wise intersecting family. If $\FF\not\subset\BD_3(n)$
then $\mu_p(\FF)<\mu_p(\BD_3(n))-0.0018$.
\end{thm}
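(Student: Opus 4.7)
The plan is to exploit the shifted structure by decomposing $\FF$ according to its trace on $[4]$ and setting up a linear program whose variables are the $p$-measures of the resulting slices. For each $A\subseteq[4]$, define $\FF(A):=\{F\setminus[4]:F\in\FF,\ F\cap[4]=A\}\subseteq 2^{[5,n]}$ and let $x_A:=\mu_p(\FF(A))$, with the $p$-measure computed on the ground set $[5,n]$. Then
\[
\mu_p(\FF)=\sum_{A\subseteq[4]}p^{|A|}q^{4-|A|}\,x_A,
\]
and $\BD_3(n)$ corresponds to the configuration $x_A=1$ iff $|A|\ge 3$, giving $\mu_p(\BD_3(n))=4p^3q+p^4$. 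Under this dictionary, $\FF\not\subset\BD_3(n)$ is equivalent to $x_A>0$ for some $A$ with $|A|\le 2$.

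The next step is to record the constraints on the vector $(x_A)_{A\subseteq[4]}$. Shiftedness inside $[4]$ forces $x_A\le x_{A'}$ whenever $A'$ is obtained from $A$ by lowering one element, while the 3-wise intersecting property of $\FF$ translates to: for any $A,B,C\subseteq[4]$ with $A\cap B\cap C=\emptyset$, the triple $(\FF(A),\FF(B),\FF(C))$ must be 3-cross-intersecting on $[5,n]$. These cross-intersecting relations, combined with $M_3(p)\le p^2$ for $p\le\frac13$ and its 3-cross-intersecting siblings valid throughout $[\frac13,\frac12]$ (both already developed earlier in the paper for the proofs of Theorems~\ref{thm2} and~\ref{thm:M3(n,p)}), yield a finite list of LP inequalities on the $x_A$'s. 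The non-triviality of $\FF$ supplies one further inequality ruling out $\bigcap\FF\ne\emptyset$.

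I would then perform a short case split based on which "bad" coordinate $A$ with $|A|\le 2$ is forced to be nonzero. Shiftedness restricts the list of minimal bad $A$ to $\{1,2\}$, $\{1\}$, and $\emptyset$; in each case I would solve the resulting LP and bound $\max\sum_A p^{|A|}q^{4-|A|}x_A$. Because the feasible polytope has bounded complexity and the objective is linear in $x$ and piecewise linear in $p$, the LP optimum is piecewise linear on $[\frac25,\frac12]$, so it suffices to verify the claimed bound $\mu_p(\BD_3(n))-0.0018$ at the finitely many breakpoints of the LP inside that interval.

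The main obstacle lies in the cases $A=\{1\}$ and $A=\emptyset$, where the 3-cross-intersecting constraints are weakest. The restriction $p\ge\frac25$ (rather than $p>\frac13$) enters precisely because for smaller $p$ the case $A=\emptyset$ admits an LP optimum approaching $\mu_p(\BD_3(n))$; indeed $M_3(p)=p^2$ already meets $4p^3q+p^4$ at $p=\frac13$, so the gap necessarily closes there. I expect the tight case determining the numerical constant $0.0018$ to be $A=\{1,2\}$ at $p=\frac25$, where the LP attains the stated gap and fixes the value $0.0018$.
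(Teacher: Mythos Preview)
Your outline captures the overall architecture---slice by a fixed prefix, record cross-intersection constraints among the slices, and bound the objective by an LP---but the constraints you list are too weak to close the argument. You only record that when $A\cap B\cap C=\emptyset$ the slices $\FF(A),\FF(B),\FF(C)$ are $3$-cross intersecting. The paper's proof (which slices over $[3]$, not $[4]$) rests on the sharper fact that these slices are $3$-cross $\tau$-intersecting with $\tau$ equal to the number of zeros in the pattern matrix (Claim~\ref{M}); e.g.\ $\GG_1,\GG_1,\GG_{23}$ are $3$-cross $3$-intersecting, $\GG_1,\GG_1,\GG_2$ are $3$-cross $4$-intersecting, and $\GG_\emptyset$ alone is $3$-wise $7$-intersecting. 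This boost in $\tau$ is then cashed in via two ingredients you never invoke: the Ahlswede--Khachatrian bounds $\tilde a_t(p)$ on $2$-wise $t$-intersecting families (Claim~\ref{tilde-at}, used for $t=2,3,4,5$) and the random-walk product inequality $x_1^2x_{23}\le\alpha(p)^9$ from Lemma~\ref{la2}. With only the linear sum constraints from Theorem~\ref{thm3} and Lemma~\ref{lemma:3w1i<=1} the LP does not drop below $\mu_p(\BD_3(n))$ in the decisive case $\GG_1\neq\emptyset$, $\GG_2=\emptyset$. Your reference to ``$M_3(p)\le p^2$ for $p\le\tfrac13$'' is also misplaced: that concerns a single $3$-wise family in a different regime of $p$ and plays no role here.

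Two further points. First, your assertion that the LP optimum is piecewise linear in $p$ is false: the objective coefficients $p^{|A|}q^{4-|A|}$ are already degree-$4$ polynomials, and once the necessary right-hand sides $\tilde\alpha(p)^3,\tilde\alpha(p)^4,\tilde\alpha(p)^7,\tilde a_t(p)$ are in place the dual value is a polynomial of degree up to $10$; the paper handles this by exhibiting explicit dual-feasible vectors and bounding the resulting polynomial on $[\tfrac25,\tfrac12]$, not by checking breakpoints. Second, your prediction of the tight case is wrong: the constant $0.0018$ is dictated not by $A=\{1,2\}$ at $p=\tfrac25$ but by the case $\GG_1\neq\emptyset,\GG_2=\emptyset$, subcase $x_{23}\le x_1$, where the dual bound yields $\mu_p(\BD_3(n))-0.00182$.
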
 
For the proof of Theorem~\ref{thm:main} we divide the family into some
subfamilies. These subfamilies are not only 3-wise intersecting, but also
satisfy some additional intersection conditions. 
To capture the conditions we need some more definitions.
We say that $r$ families $\FF_1,\ldots,\FF_r\subset 2^{[n]}$ are 
$r$-cross $t$-intersecting if $|F_1\cap\cdots\cap F_r|\geq t$ for all
$F_1\in\FF_1,\ldots,F_r\in\FF_r$.
If moreover $\FF_i\neq\emptyset$ for all $1\leq i\leq r$, then 
the $r$ families are called non-empty $r$-cross $t$-intersecting.
As usual we say $r$-cross intersecting to mean $r$-cross $1$-intersecting.
The following result is used to prove Theorem~\ref{thm:main}.
\begin{thm}\label{thm3}
Let $\frac13\leq p\leq\frac12$. If $\FF_1,\FF_2,\FF_3\subset 2^{[n]}$ are 
non-empty $3$-cross intersecting families, then
\begin{align}\label{ineq<3p}
 \mu_p(\FF_1)+ \mu_p(\FF_2)+ \mu_p(\FF_3) \leq 3p. 
\end{align}
Suppose, moreover, that $\frac13<p\leq\frac12$, all $\FF_i$ are
shifted, and $\bigcap F=\emptyset$,
where the intersection is taken over all $F\in\FF_1\cup\FF_2\cup\FF_3$.
Then,
\begin{align}\label{ineq<3p-e}
 \mu_p(\FF_1)+ \mu_p(\FF_2)+ \mu_p(\FF_3) \leq 3p-\epsilon_p,
\end{align}
where $\epsilon_p=(2-3p)(3p-1)$.
\end{thm}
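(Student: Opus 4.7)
The plan is to prove \eqref{ineq<3p} by induction on $n$ after reducing to shifted families, and then deduce \eqref{ineq<3p-e} by sharpening the inductive step. First, observe that simultaneous shifting of the three families preserves their measures and the 3-cross intersecting property, so for \eqref{ineq<3p} we may assume all three families are shifted (for \eqref{ineq<3p-e} this is built into the hypothesis). Decompose each $\FF_i$ according to membership of element $1$: let $\AA_i:=\{F\setminus\{1\}:F\in\FF_i,\,1\in F\}$ and $\BB_i:=\{F\in\FF_i:1\notin F\}$, both viewed as families on $[2,n]$. Shiftedness gives $\BB_i\subseteq\AA_i$ and $\mu_p(\FF_i)=p\,\mu_p(\AA_i)+q\,\mu_p(\BB_i)$. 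The 3-cross intersecting hypothesis on the $\FF_i$ passes to three companion hypotheses on $[2,n]$: for each $k\in\{1,2,3\}$, the triple with $\BB_k$ in slot $k$ and $\AA_j$ in each slot $j\neq k$ is 3-cross intersecting.

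For \eqref{ineq<3p}, I induct on $n$. If all three $\BB_k$ are empty, each $\FF_i$ is contained in the star at $1$ and the bound is trivial. Otherwise, WLOG $\BB_1\neq\emptyset$; since the shifted nonempty $\FF_i$ have $\AA_i\neq\emptyset$, the inductive hypothesis on $[2,n]$ applied to the first companion triple yields $\mu_p(\BB_1)+\mu_p(\AA_2)+\mu_p(\AA_3)\leq 3p$. Substituting into the formula for $\sum_i\mu_p(\FF_i)$ and using $\BB_1\subseteq\AA_1$, $\mu_p(\AA_1)\leq 1$, and $p\leq q$, the bound collapses to $\sum_i\mu_p(\FF_i)\leq q+3p^2=3p-(1-p)(3p-1)\leq 3p$ for $p\geq\frac13$, which proves \eqref{ineq<3p}. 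The companion inequalities for $k=2,3$ are used in parallel when the corresponding $\BB_k$ are nonempty, giving the same bound by symmetry.

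For \eqref{ineq<3p-e}, the hypothesis $\bigcap_{F\in\FF_1\cup\FF_2\cup\FF_3}F=\emptyset$ combined with shiftedness forces at least one $\BB_k$ to be nonempty (for shifted $\FF_i$, the intersection $\bigcap\FF_i$ is either empty or contains $1$, so the union's intersection is empty iff some $\FF_i$ has a set missing $1$). Then the 3-cross intersecting condition on $\BB_1,\AA_2,\AA_3$ together with $\BB_1\neq\emptyset$ prevents $\emptyset\in\AA_2\cap\AA_3$, i.e., it prevents $\{1\}$ from being simultaneously in $\FF_2$ and $\FF_3$; this replaces the trivial bound $\mu_p(\AA_j)\leq 1$ used above by a $p$-dependent sharpening. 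Combining all three companion inequalities with $p$-dependent LP weights, and exploiting that each $\BB_k\neq\emptyset$ imposes a support restriction on the other $\AA_j$, should produce the exact gap $(2-3p)(3p-1)$.

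The main obstacle is extracting precisely the value $\epsilon_p=(2-3p)(3p-1)$ rather than the weaker gap $(1-p)(3p-1)$ that the naive inductive argument delivers. Closing this remaining slack requires carefully weighting the three companion inductive bounds and reading off the dual certificate from an extremal configuration that interpolates between three stars at $1$ (optimal at $p=\frac13$, where $\epsilon_p=0$) and a $\BD_3$-type structure (tight at $p=\frac12$, where $\epsilon_p=\frac14$). Identifying this extremal and the associated LP certificate — matching the LP methodology described in the paper's abstract — is the decisive step.
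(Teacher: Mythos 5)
There is a genuine gap, and it is in the part that carries all the content: inequality \eqref{ineq<3p-e}. You yourself concede that your argument only delivers the gap $(1-p)(3p-1)$ and that ``identifying this extremal and the associated LP certificate \dots is the decisive step'' --- but that step is exactly the proof, and your listed constraints provably cannot produce $\epsilon_p=(2-3p)(3p-1)$. With only the three ``companion'' inequalities $b_k+a_i+a_j\le 3p$ and the monotonicity/box constraints, the point $a_1=a_2=a_3=b_1=b_2=b_3=p$ is feasible and has objective value exactly $3p$, so no $\epsilon_p$-improvement can follow; the configuration it encodes (three stars at an element other than $1$) is ruled out only by a consequence of shiftedness that you never extract. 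The paper's proof hinges on precisely this extra input: for $\{i,j,k\}=[3]$, the triples $(\AA_i,\BB_j,\BB_k)$ are $3$-cross $2$-intersecting (proved by a shift replacing the unique common element $x$ by $1$ inside $B_j$), which via Lemma~\ref{lemma:3w1i<=1} gives $a_i+b_j+b_k\le 1$; these inequalities, combined with $b_k+a_i+a_j\le 3p$ in a three-case analysis (according to which $\BB_i$ are non-empty) and explicit dual feasible solutions, yield $2-6p+9p^2=3p-\epsilon_p$ by weak duality. Your vague ``support restriction'' (that $\{1\}$ cannot lie in both $\FF_2$ and $\FF_3$) is far weaker than $a_i+b_j+b_k\le 1$ and does not close the gap. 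Note also that Lemma~\ref{lemma:3w1i<=1}, like inequality \eqref{ineq<3p} itself, rests on the Gupta--Mogge--Piga--Sch\"ulke bound (Theorem~\ref{thm4}); you use neither it nor a substitute.

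Your treatment of \eqref{ineq<3p} also has a flaw as written. The ``collapse'' to $q+3p^2$ is a valid deduction only in the subcase $\BB_2=\BB_3=\emptyset$ (there $\sum_i\mu_p(\FF_i)\le p+qb_1+p(3p-b_1)\le q+3p^2$); if $\BB_2$ or $\BB_3$ is non-empty, the facts you invoke ($b_1+a_2+a_3\le 3p$, $b_1\le a_1\le 1$, $p\le q$) allow the objective to approach $4p$, and ``the same bound by symmetry'' is false --- the best one can extract from all three companion inequalities is $3p$ itself, and even that needs a small LP/averaging argument you do not give. (Also, $\BB_i\subset\AA_i$ does not follow from shiftedness alone; you need to pass to inclusion-maximal families first, which is harmless but must be said.) So the inductive route to \eqref{ineq<3p} is salvageable with extra case work --- and it is a genuinely different, more self-contained route than the paper's one-line deduction from Theorem~\ref{thm4} --- but as written both halves of your proposal stop short of a proof, and the second half is missing its essential mechanism.
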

\noindent
The first inequality \eqref{ineq<3p} is an easy consequence of 
a recent result on $r$-cross $t$-intersecting families obtained by Gupta, 
Mogge, Piga, and Sch\"ulke \cite{GMPS}, while the second inequality 
\eqref{ineq<3p-e} is proved by solving linear programming (LP) problems.
We mention that equality holds in \eqref{ineq<3p} only if 
$|\bigcap_{F\in\FF_1\cup\FF_2\cup\FF_3} F|=1$ unless $p=\frac13$.
This fact will not be used for the proof of Theorem~\ref{thm:main}, but
it follows easily from \eqref{ineq<3p-e} and Lemma~\ref{shift-stable}.

Here we outline the proof of Theorem~\ref{thm:main}. This is done by
solving LP problems as follows. First we divide $\FF$ into subfamilies,
say, $\FF=\FF_1\cup\FF_2\cup\cdots\cup\FF_k$. Let $x_i=\mu_p(\FF_i)$.
These subfamilies satisfy some additional conditions, which give us
(not necessarily linear) constraints on the variables $x_i$. Under
the constraints we need to maximize $\mu_p(\FF)=\sum_{i=1}^kx_i$.
In principle this problem can be solved by the method of Lagrange
multipliers, but in practice it is not so easy even if concrete $p$ is 
fixed. To overcome the difficulty we first replace non-linear constraints
with weaker piecewise linear constraints. In this way the problem turns
into an LP problem with the parameter $p$, which can be solved efficiently
provided $p$ is given. Next, instead of solving this primal problem,
we seek a feasible solution to the dual LP problem with the parameter.
Finally the desired upper bound for $\mu_p(\FF)$ is obtained by the weak
duality theorem. 
As to related proof technique we refer
\cite{Wagner} for application of LP methods to some 
other extremal problems, and also \cite{ST,STT} for SDP methods.

In the next section we gather tools we use to prove our results.
Then in Section~3 we deduce Theorem~\ref{thm2} and 
Theorem~\ref{thm:M3(n,p)} from Theorem~\ref{thm:main}.
In Section~4 we prove Theorem~\ref{thm3}, whose proof is a prototype of
the proof of Theorem~\ref{thm:main}. In Section~5 we prove
our main result Theorem~\ref{thm:main}.
Finally in the last section we discuss possible extensions to non-trivial
$r$-wise intersecting families for $r\geq 4$, and a related $k$-uniform
problem. In particular we include counterexamples to a recent conjecture 
posed by O'Neill and Verstr\"aete \cite{OV} 
(c.f.~Balogh and Linz \cite{BL}).

\section{Preliminaries}\label{prelim}
\subsection{Shifting}
For $1\leq i<j\leq n$ we define the shifting operation 
$\sigma_{i,j}:2^{[n]}\to 2^{[n]}$ by 
\[
\sigma_{i,j}(\GG):=\{G_{i,j}:G\in\GG\}, 
\]
where
\[
G_{i,j}:=\begin{cases}
(G\setminus\{j\})\sqcup\{i\}&\text{if }(G\setminus\{j\})\sqcup\{i\}\not\in\GG,\\
G & \text{otherwise}.	   
\end{cases}
\]
By definition $\mu_p(\GG)=\mu_p(\sigma_{i,j}(\GG))$ follows.
We say that $\GG$ is shifted if $\GG$ is invariant under any shifting 
operations, in other words, if $G\in\GG$ then $G_{i,j}\in\GG$ for all 
$1\leq i<j\leq n$. If $\GG$ is not shifted then 
$\sum_{G\in\GG}\sum_{g\in G}g>\sum_{G'\in\sigma_{i,j}(\GG)}\sum_{g'\in G'}g'$
for some $i,j$, and so starting from $\GG$ we get a shifted $\GG'$ by applying
shifting operations repeatedly finitely many times.
It is not difficult to check that if $\GG$ is $r$-wise $t$-intersecting, 
then so is $\sigma_{i,j}(\GG)$. 
Therefore if $\GG$ is an $r$-wise $t$-intersecting family, 
then there is a shifted $r$-wise $t$-intersecting family $\GG'$ with 
$\mu_p(\GG')=\mu_p(\GG)$.
It is also true that if $\GG_1,\ldots,\GG_r$ are $r$-cross $t$-intersecting
families, then there are shifted $r$-cross $t$-intersecting families
$\GG'_1,\ldots,\GG_r'$ with $\mu_p(\GG_i)=\mu_p(\GG'_i)$ for all 
$1\leq i\leq r$.

For the proof of Theorem~\ref{thm:M3(n,p)} we use the fact that
if $\sigma_{i,j}(\GG)\cong\BD_3(n)$ then $\GG\cong\BD_3(n)$. 
More generally the following holds.
\begin{lemma}\label{shift-stable}
Let $n,a,b$ be positive integers with $a\geq 1$, $b\geq 0$, 
and $n\geq a+2b$, and let $\FF=\{F\subset[n]:|F\cap[a+2b]|\geq a+b\}$.
If $\GG\subset 2^{[n]}$ satisfies $\sigma_{i,j}(\GG)=\FF$ then 
$\GG\cong\FF$.
\end{lemma}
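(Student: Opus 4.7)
The plan is a case analysis on where $i$ and $j$ sit relative to $S := [a+2b]$. Since $i<j$ and $S$ is initial, either $\{i,j\} \subseteq S$, $\{i,j\} \cap S = \emptyset$, or $i \in S$ with $j \notin S$. In each case $\sigma_{i,j}$ restricted to $\GG$ is injective, so $\sigma_{i,j}(\GG) = \FF$ already gives $|\GG| = |\FF|$.

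In the first two configurations a shift swaps two elements on the same side of $S$, hence preserves $|G \cap S|$. Consequently $G \in \FF$ if and only if $G_{i,j} \in \FF$; the hypothesis then forces $\GG \subseteq \FF$, and equality of cardinalities gives $\GG = \FF$. The essential case is $i \in S$, $j \notin S$. Tracing the unique $\sigma_{i,j}$-preimage of each $F \in \FF$: if $i \notin F$ or $j \in F$ then $F$ itself is the only candidate, forcing $F \in \GG$; if $i \in F$ and $j \notin F$, there is a binary choice between $F$ being kept in $\GG$ or $G := (F \setminus \{i\}) \cup \{j\}$ being in $\GG$ and shifted to $F$, with the latter option legal only when $G \notin \FF$, i.e.\ $|F \cap S| = a+b$. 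Setting $\AA_0 := \{F \in \FF : i \in F,\, j \notin F,\, |F \cap S| = a+b\}$, the family $\GG$ is therefore parametrized by a subset $\mathcal{C} \subseteq \AA_0$ of ``kept'' boundary sets.

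The extremal choices $\mathcal{C} = \AA_0$ and $\mathcal{C} = \emptyset$ yield $\GG = \FF$ and $\GG = \tau(\FF)$, respectively, where $\tau$ is the transposition $(i\,j)$; the identification $\GG = \tau(\FF)$ follows by checking that $\tau$ sends each $F \in \AA_0$ to $(F \setminus \{i\}) \cup \{j\}$ and fixes every other member of $\FF$. In both extremes $\GG \cong \FF$. I expect the main obstacle to be excluding mixed $\mathcal{C}$: selecting $F_1 \in \AA_0 \setminus \mathcal{C}$ and $F_2 \in \AA_0 \cap \mathcal{C}$ places both $G_1 := (F_1 \setminus \{i\}) \cup \{j\}$ and $F_2$ in $\GG$, and one must exhibit a third member $F_3 \in \FF$ (chosen using the structure of $\FF$ on the $(a+2b)$-element core $S$) such that $G_1 \cap F_2 \cap F_3 = \emptyset$, contradicting the intersection structure inherited from the context in which the lemma is applied.
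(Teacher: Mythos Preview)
Your case split and the parametrization of the $\sigma_{i,j}$-preimages of $\FF$ by a subset $\mathcal{C}\subseteq\AA_0$ are correct and cleanly set up. The genuine gap is the last paragraph: you never actually exclude a mixed choice $\emptyset\subsetneq\mathcal{C}\subsetneq\AA_0$, and the remedy you sketch appeals to ``the intersection structure inherited from the context in which the lemma is applied.'' But the lemma, as stated, imposes \emph{no} intersection hypothesis on $\GG$, so nothing in the statement licenses that argument.

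In fact, your own parametrization shows that the lemma is false as stated. Take $n=4$, $a=b=1$, so $\FF=\{F\subseteq[4]:|F\cap[3]|\geq 2\}$, and $(i,j)=(1,4)$. Then $\AA_0=\{\{1,2\},\{1,3\}\}$, and the mixed choice $\mathcal{C}=\{\{1,2\}\}$ produces
\[
\GG=\bigl(\FF\setminus\{\{1,3\}\}\bigr)\cup\{\{3,4\}\},
\]
which one checks directly satisfies $\sigma_{1,4}(\GG)=\FF$; yet $\GG\not\cong\FF$, since its three $2$-element members $\{1,2\},\{2,3\},\{3,4\}$ form a path rather than a triangle on a $3$-element core. (The same phenomenon already occurs for $a=2$, $b=1$, the case used for $\BD_3(n)$.) The paper does not prove the lemma at all; it just cites Lemma~6 of \cite{LST}. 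The standard formulations of this shift-stability result carry the relevant intersecting hypothesis---which every family arising along the shifting process in the paper's application does satisfy---and with that extra hypothesis your plan of producing a triple witnessing an empty intersection is exactly the right way to kill the mixed case. Without it, the step simply cannot be completed.
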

\noindent
The above result is well-known, see, e.g., Lemma~6 in \cite{LST} for a
proof and a history. 
We note that the condition $\sigma_{i,j}(\GG)=\FF$ can be replaced with
$\sigma_{i,j}(\GG)\cong\FF$. Indeed if $\sigma_{i,j}(\GG)=\FF'$ and
$\FF'\cong\FF$, then by Lemma~\ref{shift-stable} (and by renaming the
vertices) we have $\GG\cong\FF'$, and so $\GG\cong\FF$.
By choosing $a=r-1$ and $b=1$, we see that if 
$\sigma_{i,j}(\GG)\cong\BD_r(n)$ then $\GG\cong\BD_r(n)$.

For $G,H\subset[n]$ we say that $G$ shifts to $H$, denoted by $G\leadsto H$,
if $G=\emptyset$, or 
if $|G|\leq|H|$ and the $i$th smallest element of $G$ is greater than or equal
to that of $H$ for each $i\leq|G|$. 
Note that the relation $\leadsto$ is transitive, and this fact will be
used later (Claims~\ref{lemma:leadsto} and \ref{poset}).

We say that $\GG$ is inclusion maximal if $G\in\GG$ and $G\subset H$ imply $H\in\GG$.
Since we are interested in the maximum measure of non-trivial $3$-wise 
intersecting families, we always assume that families are inclusion maximal.
If $\GG$ is shifted and inclusion maximal, 
then $G\in\GG$ and $G\leadsto H$ imply $H\in\GG$.

\subsection{Duality in linear programming}
For later use we briefly record the weak duality theorem in linear
programming. See e.g., chapter~6 in \cite{GM} for more details.

A primal linear programming problem (P) is formalized as follows.
\begin{description}
\item[maximize] $c^\tp x$,
\item[subject to] $Ax\leq b$ and $x\geq 0$.
\end{description}
The corresponding dual programming problem (D) is as follows.
\begin{description}
\item[minimize] $b^\tp y$,
\item[subject to] $A^\tp y\geq c$ and $y\geq 0$.
\end{description}

\begin{thm}[Weak duality]\label{thm:weak duality}
 For each feasible solution $x$ of (P) and 
each feasible solution $y$ of (D) we have
$c^\tp x\leq b^\tp y$.
\end{thm}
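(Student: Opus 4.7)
The plan is to prove the inequality by a short chain of two inequalities, each obtained by combining one of the two feasibility conditions with the relevant nonnegativity assumption. Concretely, I would start from $c^\tp x$ and rewrite it using dual feasibility, then apply primal feasibility on the other side.

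First, since $y$ is dual feasible, we have $A^\tp y \geq c$, i.e.\ $A^\tp y - c \geq 0$ coordinatewise. Combined with $x \geq 0$, taking the inner product with $x$ preserves the inequality, so
\[
 c^\tp x \;\leq\; (A^\tp y)^\tp x \;=\; y^\tp A x.
\]
Next, since $x$ is primal feasible, $Ax \leq b$, and because $y \geq 0$, taking the inner product with $y$ again preserves the inequality, giving
\[
 y^\tp A x \;\leq\; y^\tp b \;=\; b^\tp y.
\]
Chaining these two bounds yields $c^\tp x \leq b^\tp y$, as required.

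There is no real obstacle here: the only thing to watch is that the nonnegativity constraints $x \geq 0$ and $y \geq 0$ are precisely what allow the two inequalities $A^\tp y \geq c$ and $Ax \leq b$ to be converted into inequalities between scalars after taking inner products. If either nonnegativity assumption were dropped, the corresponding step in the chain could fail. Since the statement explicitly includes these sign constraints in the definitions of (P) and (D), the argument is a two-line computation and does not require any further ideas.
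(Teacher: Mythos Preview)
Your proof is correct and is the standard two-line argument for weak LP duality. The paper does not actually prove this theorem: it is stated as a known result with a reference to \cite{GM}, so there is nothing to compare against beyond noting that your argument is exactly the textbook one.
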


\subsection{Tools for the proof of Theorem~\ref{thm3}}
Let $n,t,a$ be fixed positive integers with $t\leq a\leq n$.
Define two families $\AA$ and $\BB$ by
\begin{align*}
\AA&=\{F\subset[n]:|F\cap[a]|\geq t\}, \\
\BB&=\{F\subset[n]:[a]\subset F\}.
\end{align*}
Then $\mu_p(\AA)=1-\sum_{j=0}^{t-1}\binom ajp^jq^{a-j}$, and
$\mu_p(\BB)=p^a$.
Let $\FF_1=\AA$, $\FF_2=\cdots=\FF_r=\BB$.
Then $\FF_1,\ldots,\FF_r$ are $r$-cross $t$-intersecting families with
$\sum_{i=1}^r\mu_p(\FF_i)=\mu_p(\AA)+(r-1)\mu_p(\BB)$.
The next result is a special case of Theorem~1.4 in \cite{GMPS}, which
states that the above construction is the best choice to maximize the sum of
$p$-measures of non-empty $r$-cross $t$-intersecting families provided 
$p\leq\frac12$.

\begin{thm}[Gupta--Mogge--Piga--Sch\"ulke \cite{GMPS}]\label{thm4}
Let $r\geq 2$ and $0<p\leq\frac12$. If $\FF_1,\ldots,\FF_r\subset 2^{[n]}$ 
are non-empty $r$-cross $t$-intersecting families, then 
\[
\sum_{i=1}^r\mu_p(\FF_i)\leq
\max\bigg\{\bigg(1-\sum_{j=0}^{t-1}\binom ajp^jq^{a-j}\bigg)+(r-1)p^a
:t\leq a\leq n\bigg\}.
\]
\end{thm}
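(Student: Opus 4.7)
The plan is to follow the standard shifting-plus-induction strategy for cross-intersecting extremal problems. Throughout we may assume each $\FF_i$ is inclusion-maximal, since adding supersets preserves the $r$-cross $t$-intersecting condition and weakly increases $\mu_p(\FF_i)$.

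First, I would apply shifting operations simultaneously to all $r$ families: if $\FF_1,\ldots,\FF_r$ are $r$-cross $t$-intersecting, then so are $\sigma_{i,j}(\FF_1),\ldots,\sigma_{i,j}(\FF_r)$ for any $1\le i<j\le n$ (by a direct check tracing the preimages of a hypothetical too-small intersection back through the shifts), and the measures are unchanged. Iterating over all pairs $(i,j)$, we may assume each $\FF_i$ is shifted.

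Next, I would proceed by double induction on $n$ and $t$. The base cases (small $n$, and $t=1$, where non-empty shifted and inclusion-maximal families force $\{1\}\in\FF_i$) are direct. For the inductive step, decompose each family at the element $n$:
\[
\FF_i^0:=\FF_i\cap 2^{[n-1]},\qquad \FF_i^1:=\{F\setminus\{n\}:n\in F\in\FF_i\}\subset 2^{[n-1]},
\]
so that $\mu_p(\FF_i)=q\,\mu_p(\FF_i^0)+p\,\mu_p(\FF_i^1)$ with measures on the right taken over $[n-1]$. The families $\FF_1^0,\ldots,\FF_r^0$ remain $r$-cross $t$-intersecting on $[n-1]$, while $\FF_1^1,\ldots,\FF_r^1$ are only guaranteed to be $r$-cross $(t-1)$-intersecting, since restoring $n$ to each representative can add at most one element to the global intersection. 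Applying the inductive hypothesis to each group, and noting that $p\le q$ makes the $q$-weighted $t$-intersecting contribution dominant, one is steered toward the extremal configuration with kernel $[a]$, namely $\FF_1=\{F:|F\cap[a]|\ge t\}$ and $\FF_2=\cdots=\FF_r=\{F:[a]\subset F\}$.

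The main obstacle will be the combinatorial bookkeeping around the extremal parameter $a$. Two subtleties stand out. First, some $\FF_i^1$ may be empty even after shifting, if no member of $\FF_i$ contains $n$; in that case the $(t-1)$-intersecting recursion degenerates to fewer than $r$ non-empty families and must be re-derived in that regime (a trivial bound for one $\FF_i^1$, combined with the $r$-cross $t$-intersecting bound for the remaining $\FF_j^0$'s). Second, one must verify that the $q$-weighted $t$-intersecting bound at some optimal $a'$ on $[n-1]$ plus the $p$-weighted $(t-1)$-intersecting bound at some optimal $a''$ on $[n-1]$ is itself bounded by $(1-\sum_{j<t}\binom{a}{j}p^jq^{a-j})+(r-1)p^a$ for a suitable $a\in\{t,\ldots,n\}$. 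A careful case analysis relating $a''$ to $a'$ or $a'-1$, together with the hypothesis $p\le 1/2$ used at the critical monotonicity step, should close the recursion and pin down the extremum.
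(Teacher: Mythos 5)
The paper itself does not prove this statement: it is quoted as a special case of Theorem~1.4 of Gupta--Mogge--Piga--Sch\"ulke, whose argument runs through ``necessary intersection points'' rather than a shifting-plus-induction scheme, so your outline has to stand on its own --- and it has a genuine gap at its core: the proposed double induction does not close. Write $B_t(a):=\sum_{j=0}^{t-1}\binom ajp^jq^{a-j}$ and let $g_t(m):=\max\{1-B_t(a)+(r-1)p^a: t\le a\le m\}$ denote the claimed bound. Your decomposition at the element $n$ only yields $\sum_i\mu_p(\FF_i)\le q\,g_t(n-1)+p\,g_{t-1}(n-1)$, and this quantity is in general strictly larger than $g_t(n)$. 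Indeed, by the recursion $B_t(a)=q\,B_t(a-1)+p\,B_{t-1}(a-1)$, evaluating both inner maxima at the same point $a-1$ gives
\[
q\bigl(1-B_t(a-1)+(r-1)p^{a-1}\bigr)+p\bigl(1-B_{t-1}(a-1)+(r-1)p^{a-1}\bigr)
=1-B_t(a)+(r-1)p^{a-1},
\]
which overshoots the target $1-B_t(a)+(r-1)p^{a}$ by $(r-1)p^{a-1}q$; concretely, for $r=2$, $t=1$, $p=\tfrac12$ one gets $q\,g_1(n-1)+p\,g_0(n-1)=\tfrac12\cdot 1+\tfrac12\cdot 2=\tfrac32>1=g_1(n)$. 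So bounding the $t$-level families $\FF_i^0$ and the $(t-1)$-level families $\FF_i^1$ separately by the inductive hypothesis can never give the stated inequality; all the real work lies in the ``careful case analysis'' you defer, i.e.\ in exploiting correlations between $\FF_i^0$ and $\FF_i^1$ (such as $\FF_i^0\subset\FF_i^1$ after taking monotone closures) or showing the two sub-maxima cannot be achieved simultaneously, and you supply no mechanism for recovering the lost factor.

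There are two further problems. Your base case is wrong as stated: shiftedness and inclusion-maximality do \emph{not} force $\{1\}\in\FF_i$ (take $\FF_i=\{F:|F|\ge n-1\}$, which is shifted, inclusion-maximal and non-empty), and the case $t=1$ of the theorem, $\sum_i\mu_p(\FF_i)\le\max_a\{(1-q^a)+(r-1)p^a\}$, is itself a substantive result about $r$-cross intersecting families, not a triviality one can start the induction from. Finally, the non-emptiness hypothesis --- which is essential, since $\FF_1=\emptyset$, $\FF_2=\cdots=\FF_r=2^{[n]}$ violates the bound --- is not preserved by your decomposition: $\FF_i^1$ (or $\FF_i^0$) may well be empty, the inductive hypothesis then simply does not apply, and the one-line patch you sketch for that regime is not an argument. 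As it stands the proposal is an outline whose recursion provably loses too much to reach the claimed bound.
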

\noindent
We need the non-empty condition to exclude the case
$\FF_1=\emptyset$, $\FF_2=\cdots=\FF_r=2^{[n]}$.

\begin{lemma}\label{lemma:2c1i}
Let $0<p\leq\frac12$. Suppose that $\FF_1,\FF_2 \subset 2^{[n]}$ are 
$2$-cross intersecting families. 
\begin{enumerate}
\item[(i)] $\mu_p(\FF_1)+\mu_p(\FF_2)\leq 1$.
\item[(ii)] $\mu_p(\FF_1)\mu_p(\FF_2)\leq p^2$.
\end{enumerate}
\end{lemma}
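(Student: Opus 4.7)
The plan is to treat the two inequalities separately. For (i), I would first replace each $\FF_i$ by its upward closure; this operation preserves the 2-cross intersecting property (supersets of cross-intersecting sets are still cross-intersecting) and does not decrease $\mu_p$. With both families up-closed, the condition that $F_1\cap F_2\neq\emptyset$ for every $F_1\in\FF_1$, $F_2\in\FF_2$ is equivalent to $\FF_1\cap\{A\subseteq[n]:[n]\setminus A\in\FF_2\}=\emptyset$, since the missing sets are precisely the subsets of the complements of members of $\FF_2$. The involution $A\mapsto[n]\setminus A$ sends $\mu_p$ to $\mu_q$, so taking $p$-measure gives $\mu_p(\FF_1)\leq 1-\mu_q(\FF_2)$. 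Since $\FF_2$ is an up-set and $p\leq q$, the standard monotonicity of $p$-measure on up-sets (FKG-style, or by a coupling of Bernoulli trials) yields $\mu_p(\FF_2)\leq\mu_q(\FF_2)$, and combining these inequalities gives (i).

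For (ii), I would first apply shifting to assume both $\FF_1$ and $\FF_2$ are shifted and inclusion-maximal, and then induct on $n$. Splitting each $\FF_i$ according to membership of element $1$, set $\FF_i^+=\{F\setminus\{1\}:1\in F\in\FF_i\}$ and $\FF_i^-=\{F\in\FF_i:1\notin F\}$, viewed as families in $2^{[2,n]}$. Inclusion-maximality forces $\FF_i^-\subseteq\FF_i^+$, so writing $a_i=\mu_p(\FF_i^+)$ and $b_i=\mu_p(\FF_i^-)$ we get $b_i\leq a_i$ and $\mu_p(\FF_i)=pa_i+qb_i$. A direct check confirms that the three pairs $(\FF_1^+,\FF_2^-)$, $(\FF_1^-,\FF_2^+)$, and $(\FF_1^-,\FF_2^-)$ are each 2-cross intersecting in $2^{[2,n]}$, so by the inductive hypothesis $a_1b_2$, $a_2b_1$, and $b_1b_2$ are all at most $p^2$. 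To control the remaining cross-term $a_1a_2$, I would case-split on whether $\{1\}\in\FF_i$: if $\{1\}\in\FF_1$, then by upward closure $\FF_1$ contains the star of $1$, and cross-intersection forces every $F_2\in\FF_2$ to contain $1$, so $\FF_2$ is contained in the star of $1$ and $\mu_p(\FF_2)\leq p$; the symmetric case $\{1\}\in\FF_2$ is analogous. If neither family contains the singleton $\{1\}$, then all minimal elements have size at least $2$, and a further shifting argument pins down the structure enough to bound $a_1a_2$. Assembling all four terms of $(pa_1+qb_1)(pa_2+qb_2)$ yields the desired bound $p^2$.

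The main obstacle is controlling $a_1a_2$, because the families $\FF_1^+$ and $\FF_2^+$ need not themselves be 2-cross intersecting in $2^{[2,n]}$: any $F_1\cup\{1\}\in\FF_1$ and $F_2\cup\{1\}\in\FF_2$ automatically share the element $1$, so the original cross-intersecting hypothesis gives no information about $F_1\cap F_2$. This is exactly where the singleton case analysis is needed, and the subtlety is that one must also treat the case where neither family contains $\{1\}$ by separately exploiting shiftedness on the smallest minimal elements of $\FF_1$ and $\FF_2$.
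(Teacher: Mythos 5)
Your part (i) is correct, and it takes a genuinely different route from the paper: the paper deduces (i) from the Gupta--Mogge--Piga--Sch\"ulke bound (Theorem~\ref{thm4} with $r=2$, $t=1$) plus the elementary estimate $p^a\leq q^a$, whereas you argue directly by up-closing both families, observing that $\FF_1$ is disjoint from the family of complements of $\FF_2$, so $\mu_p(\FF_1)\leq 1-\mu_q(\FF_2)$, and then using monotonicity of the measure of an up-set in $p$ (valid since $p\leq\tfrac12\leq q$). That is a clean, self-contained alternative that avoids quoting \cite{GMPS}.

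Part (ii), however, has a genuine gap. The paper does not prove (ii) at all: it is quoted as Theorem~2 of \cite{T2010}, and it is a substantially nontrivial statement (note that the straightforward shifting/random-walk bound of Lemma~\ref{la2} specialized to $r=2$, $t=1$ only gives $\mu_p(\FF_1)\mu_p(\FF_2)\leq\alpha(p)^2=(p/q)^2$, which is strictly weaker than $p^2$ for $p<\tfrac12$; so (ii) cannot be reached by the soft arguments available in this paper). Your induction founders exactly at the point you flag, the term $a_1a_2$, and the case analysis you propose does not close it. In the case $\{1\}\in\FF_1$ you only conclude $\FF_2\subset\{F:1\in F\}$, hence $\mu_p(\FF_2)\leq p$; this gives $\mu_p(\FF_1)\mu_p(\FF_2)\leq p$, not $p^2$, and you cannot upgrade it to $\mu_p(\FF_1)\leq p$, since in this case $\mu_p(\FF_1)$ can be close to $1$ (take $\FF_2=\{[n]\}$ and $\FF_1$ all nonempty sets: the product is $(1-q^n)p^n$, which is fine, but not for the reason you give). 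In the case where neither family contains $\{1\}$, the phrase ``a further shifting argument pins down the structure enough to bound $a_1a_2$'' is not an argument: controlling the interaction between the two link families $\FF_1^+$ and $\FF_2^+$, which are not cross-intersecting, is precisely the content of the theorem, and it is where the proof in \cite{T2010} does real work. As written, your proposal proves (i) but not (ii); for (ii) you would either have to cite \cite{T2010}, as the paper does, or supply the missing quantitative trade-off between the part of $\FF_1$ outside the star of $1$ and the measure of $\FF_2$.
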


\begin{proof}
(i) If one of the families is empty, then the inequality clearly holds.
So suppose that both families are non-empty. Then, by Theorem~\ref{thm4},
we have
\[
 \mu_p(\FF_1) +  \mu_p(\FF_2) \leq \max\{(1-q^a)+p^a:1\leq a\leq n\}.
\]
Thus it suffices to show that $1-q^a+p^a\leq 1$, or equivalently,
$p^a\leq (1-p)^a$ for all $a\geq 1$. Indeed this follows from the assumption
$p\leq \frac12$.

(ii) This is proved in \cite{T2010} as Theorem~2.
\end{proof}

\begin{lemma}\label{lemma:3w1i<=1}
Let $0<p\leq\frac12$, and $t\geq 2$. If $\FF_1,\FF_2, \FF_3 \subset 2^{[n]}$ 
are non-empty $3$-cross $t$-intersecting families, then 
$\mu_p(\FF_1)+\mu_p(\FF_2)+\mu_p(\FF_3)\leq 1$.
\end{lemma}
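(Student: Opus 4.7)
My plan is to apply Theorem~\ref{thm4} directly (with $r=3$) and then reduce the claim to a short elementary inequality in $a$ and $p$. Since the three families are non-empty and $3$-cross $t$-intersecting with $0 < p \leq \tfrac12$, Theorem~\ref{thm4} yields
\[
\mu_p(\FF_1) + \mu_p(\FF_2) + \mu_p(\FF_3) \;\leq\; \max_{t \leq a \leq n}\left\{\, 1 - \sum_{j=0}^{t-1}\binom{a}{j} p^j q^{a-j} + 2 p^a \,\right\}.
\]
So the lemma reduces to showing that for every integer $a \geq t$ (in particular $a \geq 2$) and every $p \in (0, \tfrac12]$,
\[
2 p^a \;\leq\; \sum_{j=0}^{t-1} \binom{a}{j} p^j q^{a-j}.
\]
The role of the hypothesis $t \geq 2$ is exactly that the right-hand side then contains the two leading terms $q^a$ and $a p q^{a-1}$, so it suffices to prove the stronger inequality $2 p^a \leq q^a + a p q^{a-1}$.

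For this last bound I would factor the right-hand side as $q^{a-1}(q + ap)$. Using $p \leq \tfrac12 \leq q$ gives $q^{a-1} \geq p^{a-1}$, while $a \geq 2$ and $p \leq 1$ give $q + ap = 1 + (a-1)p \geq 1 + p \geq 2p$. Multiplying these two estimates yields $q^{a-1}(q + ap) \geq p^{a-1}\cdot 2p = 2p^a$, and the proof is complete. The argument is essentially one line on top of Theorem~\ref{thm4}, so I do not anticipate a real obstacle; the only thing to keep track of is the use of $t \geq 2$ to guarantee that \emph{both} the $j=0$ and $j=1$ terms are available to absorb the $2p^a$ coming from $\FF_2$ and $\FF_3$.
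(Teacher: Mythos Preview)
Your proof is correct and follows essentially the same route as the paper: both apply Theorem~\ref{thm4} with $r=3$ and reduce the statement to the elementary inequality $2p^a\leq q^a+apq^{a-1}$ for $a\geq 2$ and $0<p\leq\tfrac12$. The only difference is in how this final inequality is verified: the paper shows $f(p,a)=1-q^a-apq^{a-1}+2p^a$ is increasing in $p$ and then checks $f(\tfrac12,a)\leq 1$ by monotonicity in $a$, whereas your direct factoring $q^{a-1}(q+ap)\geq p^{a-1}\cdot 2p$ is a bit more transparent and avoids the limit.
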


\begin{proof}
Note that if $t\geq 2$ then $3$-cross $t$-intersecting families are 
$3$-cross $2$-intersecting families. Thus, using Theorem~\ref{thm4}, 
it suffices to show that
\[
 f(p,a):=(1-q^a-apq^{a-1})+2p^a \leq 1
\]
for $a\geq t$.
This inequality follows from the fact that $f(p,a)$ is increasing in $p$,
and $f(\frac12,a)$ is non-decreasing in $a$ for $a=2,3,\ldots$, and
$\lim_{a\to\infty}f(\frac12,a)=1$.
\end{proof}

\subsection{Random walk}
Here we extend the random walk method to deal with $p$-measures of 
$r$-cross $t$-intersecting families possibly with different $p$-measures. 
The method was originally introduced by Frankl in \cite{Fshift}. 

Let $r\geq 2$ be a positive integer. For $1\leq i\leq r$ let
$p_i$ be a real number with $0<p_i<1-\frac1r$, and let $q_i=1-p_i$.
Let $\aa(p_i)\in(0,1)$ be a unique root of the equation
\begin{align}\label{eq:alpha}
X=p_i+q_iX^r,
\end{align}
and let $\bb=\bb(p_1,\ldots,p_r)\in(0,1)$ be a unique root of the equation 
\begin{align}\label{eq:beta}
 X=\prod_{i=1}^r(p_i+q_iX). 
\end{align} 

Consider two types of random walks, $A_i$ and $B$, in the two-dimensional
grid $\Z^2$. Both walks start at the origin, and at each step it moves
from $(x,y)$ to $(x,y+1)$ (one step up), or from $(x,y)$ to $(x+1,y)$
(one step to the right). For every step the type $A_i$ walk takes one step
up with probability $p_i$, and one step to the right with probability $q_i$.
On the other hand, at step $j$, the type $B$ walk takes one step up with
probability $p_i$, and one step to the right with probability $q_i$,
where $i=j\bmod r$. 
Let $L_j$ denote the line $y=(r-1)x+j$. 
\begin{claim}\label{claim:probability}
Let $r\geq 2$ and $t\geq 1$ be integers. Then we have
\begin{align*}
\P(\text{the type $A_i$ walk hits the line $L_t$})&=\aa(p_i)^t,\\
\P(\text{the type $B$ walk hits the line $L_{rt}$})&=\bb(p_1,\ldots,p_r)^t.
\end{align*}
\end{claim}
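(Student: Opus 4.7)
The plan is to derive recursive equations for the two hitting probabilities by conditioning on initial step(s) and exploiting translation invariance in the homogeneous case and an arithmetic periodicity in the inhomogeneous case.

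For the type $A_i$ walk, let $P_t$ denote the probability of hitting $L_t$. First I would establish the multiplicative relation $P_t = P_1^t$. Since the walk is i.i.d., the strong Markov property applied at the first hitting time of $L_j$ (on the event that $L_j$ is hit) shows that the future walk is again a type $A_i$ walk starting from a point whose position relative to $L_{j+1}$ is the same as the origin's relative to $L_1$; iterating gives $P_t = P_1^t$. Next I would condition on the first step: with probability $p_i$ the walk steps up to $(0,1)\in L_1$ and is done, while with probability $q_i$ it steps right to $(1,0)$, from which hitting $L_1$ is, after translation by $(1,0)$, equivalent to hitting $L_r$ from the origin. Hence $P_1 = p_i + q_i P_1^r$, which identifies $P_1$ as the unique root $\alpha(p_i)\in(0,1)$ of \eqref{eq:alpha}, giving the first formula.

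For the type $B$ walk, the crucial observation is that any first hitting time of $L_r$ must be a multiple of $r$: if $L_r$ is first hit at $(x,y)$ with $y=(r-1)x+r$, then the step index is $x+y=r(x+1)$. In particular, after first reaching $L_r$ the step counter is congruent to $0\pmod r$, so the sequence of future step probabilities again begins with $p_1,p_2,\ldots$; the post-hitting walk is therefore again a type $B$ walk, and the event of reaching $L_{2r}$ from it reduces via translation to the event of reaching $L_r$ from the origin. Letting $Q_t$ denote the probability of hitting $L_{rt}$, the strong Markov property then yields $Q_t = Q_1^t$.

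To obtain the equation for $Q_1$, I condition on the entire block of the first $r$ steps. Within this block $L_r$ is hit only if all $r$ steps are up, so otherwise the walk ends at some $(r-k,k)$ with $0\leq k < r$ without having hit $L_r$. By the same translation, hitting $L_r$ from $(r-k,k)$ is equivalent to a fresh type $B$ walk hitting $L_{r(r-k)}$, which (by the previous paragraph) has probability $Q_1^{r-k}$. Summing over the $2^r$ up/right patterns of the first $r$ steps,
\[
Q_1 \;=\; \sum_{S\subseteq[r]}\Bigl(\prod_{i\in S}p_i\Bigr)\Bigl(\prod_{j\notin S}q_j\Bigr)Q_1^{\,r-|S|}
 \;=\; \prod_{i=1}^{r}\bigl(p_i + q_i Q_1\bigr)
\]
by distributivity, so $Q_1$ satisfies \eqref{eq:beta} and hence $Q_1 = \beta(p_1,\ldots,p_r)$. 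The main technical point is the inhomogeneous type $B$ case; the arithmetic fact that $L_r$ can be hit only at step multiples of $r$ is what rescues the self-similar structure that is immediate in the homogeneous type $A_i$ case.
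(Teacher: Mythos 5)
Your proof is correct and follows essentially the same route as the paper: condition on the first step (type $A_i$) and on the first block of $r$ steps (type $B$) to obtain the fixed-point equations \eqref{eq:alpha} and \eqref{eq:beta}. If anything, your write-up is slightly more complete, since the mod-$r$ renewal observation together with the strong Markov property justifies the multiplicative forms $P_t=P_1^t$ and $Q_t=Q_1^t$, which the paper simply posits when it supposes the hitting probabilities are $X^t$ and $Y^t$.
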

\begin{proof}
Let $x_i(t)$ denote the probability that the walk $A_i$ hits the line $L_t$.
After the first step of the walk, it is at $(0,1)$
with probability $p_i$, or at $(1,0)$ with probability $q_i$. From $(0,1)$
the probability for the walk hitting $L_t$ is $x_i(t-1)$, and from $(1,0)$
the probability is $x_i(t-1+r)$. Therefore we have
\begin{align}\label{recurrenceA}
 x_i(t)=p_i x_i(t-1)+q_i x_i(t-1+r). 
\end{align}
Let $a_j$ be the number of walks from $(0,0)$ to $P_j:=(j,(r-1)j+t)$
which touch $L_t$ only at $P_j$. (It is known that 
$a_j=\frac t{rj+t}\binom{rj+t}j$, but we do not need this fact.)
Then we have $x_i(t)=\sum_{j\geq 0}a_jp_i^{(r-1)j+t}q_i^j$.
If a walk touches the line $L_{t+1}$, then the walk needs to hit $L_t$ 
somewhere, say, at $P_j$ for the first time. 
Then the probability that the walk
hit $L_{t+1}$ starting from $P_j$ is equal to $x_i(1)$. Thus we have
\[
 x_i(t+1)=\sum_{j\geq 0}(a_jp_i^{(r-1)j+t}q_i^j)\,x_i(1)=x_i(t)x_i(1),
\]
and so we can write $x_i(t)=z^t$, where $z:=x_i(1)$.
Substituting this into \eqref{recurrenceA} and dividing both sides by 
$z^{t-1}$ we see that $z$ is a root of
the equation \eqref{eq:alpha}. Let $f(X):=p_i+q_iX^r-X$. Then we have
$f(0)=p_i>0$, $f(1)=0$, $f'(1)=q_ir-1>0$, and $f''(X)=q_ir(r-1)X^{r-2}>0$. 
Thus the equation $f(X)=0$, or equivalently, \eqref{eq:alpha} has 
precisely two roots in $[0,1]$, that is, $\alpha(p_i)$ and $1$. 
We claim that $z\neq 1$. Indeed we have
$\lim_{t\to\infty}x_i(t)=\lim_{t\to\infty}z^t=0$ because a step in 
the type $A_i$ walk reduces, on average, $y-(r-1)x$ by $(r-1)-rp_i>0$.
Consequently we have $z=\alpha(p_i)$, and so $x_i(t)=\alpha(p_i)^t$.

Next let $y(t)$ denote the probability that the walk $B$ hits the line 
$L_{rt}$. After the first $r$ steps, it is at $(x,r-x)$ for some
$0\leq x\leq r$ with probability
\[
 \sum_{J\in\binom{[r]}x}\prod_{i\in [r]\setminus J}p_i\prod_{j\in J}q_j.
\]
From $(x,r-x)$ the probability for the walk hitting $L_{rt}$ is $y(x+t-1)$.
This yields
\begin{align}\label{recurrenceB}
 y(t)=\sum_{x=0}^ry(x+t-1)
 \sum_{J\in\binom{[r]}x}\prod_{i\in [r]\setminus J}p_i\prod_{j\in J}q_j.
\end{align}
Let $b_s$ be the number of walks from $(0,0)$ to $Q_s:=(s,(r-1)s+rt)$
which touch $L_{rt}$ only at $Q_s$. Then we have 
\[
y(t)=\sum_{s\geq 0}b_s
\sum_{J\in\binom{[r(s+t)]}s}\prod_{i\in [r(s+t)]\setminus J}
p_i\prod_{j\in J}q_j.
\]
If a walk touches the line $L_{r(t+1)}$, then the walk needs to hit $L_{rt}$
somewhere, say, at $Q_s$ for the first time. 
Then the probability that the walk
hit $L_{r(t+1)}$ starting from $Q_s$ is equal to $y(1)$. Thus we have
\[
y(t+1)=
\sum_{s\geq 0}b_s
\sum_{J\in\binom{[r(s+t)]}s}\prod_{i\in [r(s+t)]\setminus J}
p_i\prod_{j\in J}q_j\,y(1)
=y(t)y(1),
\]
and so $y(t)=w^t$, where $w:=y(1)$.
Substituting this into \eqref{recurrenceB} and dividing both sides by 
$w^{t-1}$ we have
\begin{align*}
w=\sum_{x=0}^r w^x \sum_{J\in\binom{[r]}x}\prod_{i\in[r]\setminus J}p_i
\prod_{j\in J}q_j=\prod_{i=1}^r(p_i+q_iw).
\end{align*}
Thus $w$ is a root of the equation \eqref{eq:beta}.
Let $g(X):=\prod_{i=1}^r(p_i+q_iX)-X$. Then we have
$g(0)=\prod_i p_i>0$, $g(1)=0$, $g'(1)=\sum_i q_i>0$, and $g''(X)>0$. 
Thus the equation $g(X)=0$, or equivalently, \eqref{eq:beta} has 
precisely two roots in $[0,1]$, that is, $\beta$ and $1$. 
But we can exclude the possibility $w=1$ in the same way as in the previous
case. Thus we have $w=\beta$ and so $y(t)=\beta^t$.
\end{proof}

\begin{claim}\label{prop8.1}
Let $\FF_1,\ldots,\FF_r\subset 2^{[n]}$ be shifted $r$-cross $t$-intersecting
families. Then, for all $(F_1,\ldots,F_r)\in\FF_1\times\cdots\times\FF_r$, 
there exists $j=j(F_1,\ldots,F_r)\in[n]$ such that 
$\sum_{i=1}^r|F_i\cap[j]|\geq t+(r-1)j$.
\end{claim}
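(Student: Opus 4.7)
My plan is to argue by contradiction, using shiftedness to produce a tuple that violates the $r$-cross $t$-intersecting assumption. Suppose some $(F_1,\ldots,F_r)\in\FF_1\times\cdots\times\FF_r$ satisfies
\[
\sum_{i=1}^{r} |F_i\cap[j]| \;\le\; t-1+(r-1)j \qquad \text{for every } j\in[n].
\]
The objective is to construct $G_i\in\FF_i$ with $|G_1\cap\cdots\cap G_r|\le t-1$, which would contradict the $r$-cross $t$-intersecting property. Since each $\FF_i$ is shifted, it suffices to find $G_i\subset[n]$ with $|G_i|=|F_i|$ and $G_i\leadsto F_i$, since then $G_i\in\FF_i$ automatically.

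I would think of the $s$-th smallest element $a_{i,s}$ of $F_i$ as a ``token'' that must be assigned to some position in $\{1,\ldots,a_{i,s}\}$, with tokens of the same family occupying distinct positions. The family $G_i$ is then the set of positions occupied by tokens from family $i$. The constraint $|G_1\cap\cdots\cap G_r|\le t-1$ becomes: at most $t-1$ positions receive a token from every family. The negated hypothesis provides exactly the matching Hall-type budget, since the number of tokens whose upper bound lies in $[j]$, namely $\sum_i|F_i\cap[j]|\le(r-1)j+(t-1)$, is precisely the capacity of $[j]$ when each position holds at most $r-1$ tokens, augmented by a reserve of $t-1$ ``overfull'' positions that may hold $r$. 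A greedy placement, processing tokens in increasing order of upper bound and assigning each to the smallest position that does not create more than $t-1$ overfull positions, respects this budget at every step.

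The main obstacle is verifying that the greedy algorithm never gets stuck, i.e., that the hypothesis genuinely translates into the Hall-type condition at every intermediate stage; this requires careful bookkeeping of unplaced tokens against the running budget. For intuition, the $r=2$ case admits an explicit realisation: take $G_2$ to be the union of the $|F_2|-(t-1)$ smallest elements of $[n]\setminus F_1$ together with the $t-1$ smallest elements of $F_1$, and a short case analysis on $|F_1\cap[j]|$ verifies $G_2\leadsto F_2$, with the one residual bad case forcing $|F_1\cap F_2|<t$ directly, itself a contradiction to $2$-cross $t$-intersecting. The general $r$ case follows the same pattern, with the $r$-fold interactions requiring more elaborate accounting but no new ideas.
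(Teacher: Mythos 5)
Your overall strategy --- negate the conclusion, push the elements of each $F_i$ down to obtain sets $G_i\in\FF_i$ (shiftedness does give $G_i\in\FF_i$ whenever $|G_i|=|F_i|$ and the $s$-th smallest element of $G_i$ is at most that of $F_i$; note that in the paper's notation this is $F_i\leadsto G_i$, not $G_i\leadsto F_i$), and contradict the $r$-cross $t$-intersecting hypothesis --- is sound in outline, and it is a genuinely different route from the paper's. But there is a real gap: in your formulation the entire content of the claim is the feasibility statement that the counting condition $\sum_i|F_i\cap[j]|\le (r-1)j+(t-1)$ for all $j$ suffices to produce injections $\phi_i\colon F_i\to[n]$ with $\phi_i(a)\le a$ for which at most $t-1$ positions receive a token from every family. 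You correctly identify this counting as the natural necessary condition, but you leave unverified that your greedy placement never gets stuck, and the assertion that the general case needs ``no new ideas'' is precisely where the difficulty lies: the per-family injectivity constraints interact with the global budget of $t-1$ overfull positions, so this is not a plain bipartite matching or flow feasibility problem, and rescuing a stuck greedy would require an augmenting/exchange argument or an induction that you have not supplied. Moreover, your explicit $r=2$ construction does not transfer mechanically to $r\ge 3$ (the case actually used in the paper), since for $r\ge 3$ one cannot fix $G_1=F_1$ and rebuild a single other set; the positions must be negotiated among all $r$ families simultaneously.

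For comparison, the paper sidesteps all of this with a short extremal argument: among all counterexample tuples choose one minimizing $|F_1\cap\cdots\cap F_r|$, let $j$ be the $t$-th smallest element of the common intersection; the failed inequality at this $j$ forces some $i<j$ that is missing from at least two of the sets, and replacing $j$ by $i$ in one of them (legitimate by shiftedness) yields another counterexample with strictly smaller common intersection, a contradiction. If you wish to keep your constructive route, you must actually prove the feasibility lemma above; as written, the proof is incomplete.
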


This is Proposition 8.1 in \cite{Fshift}. We include a simple proof
for convenience.
\begin{proof}
Suppose the contrary. Then there exist an $r$-tuple of a counterexample
$(F_1,\ldots,F_r)\in\FF_1\times\cdots\times\FF_r$, which we choose
$|F_1\cap\cdots\cap F_r|$ minimal. Let $j$ be the $t$-th element of
$F_1\cap\cdots\cap F_r$. Then we have
\[
\sum_{i=1}^r|F_i\cap[j]|<t+(r-1)j=|F_1\cap\cdots\cap F_r\cap[j]|+(r-1)|[j]|.
\]
Thus there exist some $i\in[j-1]$ such that $i$ is not contained in (at least)
two of the $r$ subsets, say, $i\not\in F_1\cup F_2$. By the shiftedness
we have $F_1':=(F\setminus\{j\})\cup\{i\}\in\FF_1$. Then 
$|F_1\cap[j]|=|F_1'\cap[j]|$ and so $(F_1',F_2,\ldots,F_r)$ is also a
counterexample. But this contradicts the minimality because
$|F_1'\cap F_2\cap \cdots\cap F_r|<|F_1\cap F_2\cap\cdots\cap F_r|$.
\end{proof}

Let $\FF_1,\ldots,\FF_r\subset 2^{[n]}$ be families of subsets. For each
$(F_1,\ldots,F_r)\in\FF_1\times\cdots\times\FF_r$ we define a vector $w$ by
\[
w=w(F_1,\ldots,F_r):=(w_1^{(1)},w_2^{(1)},\ldots,w_r^{(1)},\ldots,
w_1^{(n)},w_2^{(n)},\ldots,w_r^{(n)})\in\{0,1\}^{rn},
\]
where 
\[
 w_i^{(j)}=\begin{cases}
  1&\text{ if }j\in F_i, \\
  0&\text{ if }j\not\in F_i.
  \end{cases}
\]
We can view $w$ as an $rn$-step walk whose $k$-th step is up (resp.\ right)
if the $k$-th entry of $w$ is $1$ (resp.\ $0$) for $1\leq k\leq rn$.

\begin{claim}\label{claim:walk}
Let $\FF_1,\ldots,\FF_r\subset 2^{[n]}$ be shifted $r$-cross $t$-intersecting
families. 
Then, for all $(F_1,\ldots,F_r)\in\FF_1\times\cdots\times\FF_r$, 
the walk $w(F_1,\ldots,F_r)$ hits the line $L_{rt}$.
\end{claim}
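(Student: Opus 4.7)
The plan is to reduce Claim~\ref{claim:walk} to Claim~\ref{prop8.1} by a short bookkeeping computation followed by a one-line ``continuity'' argument. I first observe that after the first $rj$ steps of the walk $w(F_1,\ldots,F_r)$ exactly the coordinates $w_i^{(h)}$ with $1\leq i\leq r$ and $1\leq h\leq j$ have been consumed, so if $(x_k,y_k)$ denotes the position of the walk after step $k$, then
\[
y_{rj}=\sum_{i=1}^{r}|F_i\cap[j]|\quad\text{and}\quad x_{rj}=rj-y_{rj}.
\]
It will be convenient to track the signed vertical distance of $(x_k,y_k)$ from the line $y=(r-1)x$, namely $v_k:=y_k-(r-1)x_k$. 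The walk hits $L_{rt}$ precisely when $v_k=rt$ for some $k\in\{0,1,\ldots,rn\}$.

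Next, applying Claim~\ref{prop8.1} to the shifted $r$-cross $t$-intersecting families $\FF_1,\ldots,\FF_r$, I would pick $j\in[n]$ such that $\sum_{i=1}^{r}|F_i\cap[j]|\geq t+(r-1)j$. Setting $k=rj$ and using the bookkeeping above,
\[
v_{rj}=y_{rj}-(r-1)(rj-y_{rj})=r\,y_{rj}-(r-1)rj\geq r\bigl(t+(r-1)j\bigr)-(r-1)rj=rt,
\]
so at step $rj$ the walk already lies on or strictly above the line $L_{rt}$.

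Finally I would upgrade ``on or above'' to ``hits'' by a one-sided monotonicity argument. Each step of the walk changes $v_k$ by either $+1$ (an up step) or $-(r-1)$ (a right step), so $v_k$ can increase only by unit jumps. Since $v_0=0<rt\leq v_{rj}$, the least index $k^\ast$ with $v_{k^\ast}\geq rt$ satisfies $v_{k^\ast-1}<rt$, forcing the $k^\ast$-th step to be an up step; hence $v_{k^\ast}=v_{k^\ast-1}+1=rt$, and the walk occupies a lattice point of $L_{rt}$ at step $k^\ast$. The only substantive input is Claim~\ref{prop8.1}, which is already in hand; everything else is a change of coordinates and the remark that only the $+1$ increments can push $v_k$ upward, so I do not anticipate any real obstacle.
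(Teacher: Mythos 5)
Your proposal is correct and follows essentially the same route as the paper: invoke Claim~\ref{prop8.1} to get the index $j$, count that the first $rj$ steps contain at least $t+(r-1)j$ up steps so the walk is on or above $L_{rt}$ at step $rj$, and conclude it hits the line. The only difference is that you spell out the final step (the quantity $v_k$ increases only in unit jumps, so the first time it reaches $rt$ it equals $rt$ exactly), which the paper leaves implicit.
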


\begin{proof}
Let $j=j(F_1,\ldots,F_r)$ be from Claim~\ref{prop8.1}, and let 
$w=w(F_1,\ldots,F_r)$ be the corresponding walk. 
In the first $rj$ steps of $w$ there are at least $t+(r-1)j$ up steps,
and so at most $rj-(t+(r-1)j)=j-t$ right steps. 
This means that the walk $w$ hits the line $L_{rt}$ within the first
$rj$ steps.
\end{proof}

\begin{thm}
Let $p_1,\ldots,p_r$ be positive real numbers less than $1-\frac1r$, and
let $\FF_1,\ldots,\FF_r\subset 2^{[n]}$ be $r$-cross $t$-intersecting
families. 
Then we have $\prod_{i=1}^r\mu_{p_i}(\FF_i)\leq \beta^t$,
where $\beta$ is the root of the equation \eqref{eq:beta}.
\end{thm}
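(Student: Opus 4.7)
The plan is to combine a shifting reduction with the random walk framework built up in Claims~\ref{claim:probability} and \ref{claim:walk}. Since shifting preserves each $p_i$-measure and the $r$-cross $t$-intersecting property (as noted in Section~2.1 applied coordinatewise), I first replace the $\FF_i$ by shifted families without altering either side of the inequality.

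Next, I expand the product of measures as
\[
\prod_{i=1}^{r}\mu_{p_i}(\FF_i)
=\sum_{(F_1,\ldots,F_r)\in\FF_1\times\cdots\times\FF_r}
\prod_{i=1}^{r} p_i^{|F_i|}q_i^{n-|F_i|},
\]
and interpret each summand as a walk probability. The key identification is that $\prod_{i=1}^{r}p_i^{|F_i|}q_i^{n-|F_i|}$ equals the probability that the first $rn$ steps of the type $B$ walk trace out exactly the lattice path $w(F_1,\ldots,F_r)$. Indeed, at the step of $w$ indexed by $(j-1)r+i$ the entry is $w_i^{(j)}$, while at that very step the type $B$ walk uses parameters $(p_i,q_i)$ because $(j-1)r+i\equiv i\pmod r$; thus each position contributes $p_i$ when $j\in F_i$ and $q_i$ otherwise, and taking the product over all $rn$ positions gives the claimed weight. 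Since the map $(F_1,\ldots,F_r)\mapsto w(F_1,\ldots,F_r)$ is injective, the events that walk $B$ follows these paths are pairwise disjoint.

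By Claim~\ref{claim:walk} every such path hits the line $L_{rt}$, so the sum of these disjoint probabilities is at most $\P(\text{walk }B\text{ ever reaches }L_{rt})$, which Claim~\ref{claim:probability} evaluates to $\beta^t$. The only real obstacle is careful bookkeeping: one has to verify that the cyclic assignment of parameters $p_1,\ldots,p_r$ along the type $B$ walk matches the flattening of $w$ from an $r\times n$ array into an $rn$-step sequence in the way the definitions were set up. Once that alignment is checked, the inequality $\prod_{i=1}^{r}\mu_{p_i}(\FF_i)\leq\beta^t$ drops out immediately.
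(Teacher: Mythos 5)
Your proof is correct and follows essentially the same route as the paper: shift the families, expand the product of measures over $r$-tuples, identify each summand with the probability that the type $B$ walk traces $w(F_1,\ldots,F_r)$, and bound the resulting disjoint union by $\P(\text{type }B\text{ walk hits }L_{rt})=\beta^t$ via Claims~\ref{claim:walk} and~\ref{claim:probability}. The only difference is that you make explicit the injectivity/disjointness and parameter-alignment bookkeeping that the paper leaves implicit.
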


\begin{proof}
Since the shifting operation preserves $r$-cross $t$-intersecting 
property and $p$-measures, we may assume that all $\FF_i$ are shifted.
We have
\[
\prod_{i=1}^r\mu_{p_i}(\FF_i) 
 = \prod_{i=1}^r\sum_{F_i\in\FF_i}p_i^{|F_i|}q_i^{n-|F_i|}\\
 = \sum_{(F_1,\ldots,F_r)\in\FF_1\times\cdots\times\FF_r}
\prod_{i=1}^rp_i^{|F_i|}q_i^{n-|F_i|}.
\]
Using Claim~\ref{claim:walk} the RHS is
\[
\leq\P(\text{type $B$ walk hits $L_{rt}$ in the first $rn$ steps})
\leq \P(\text{type $B$ walk hits $L_{rt}$}) = \beta^t,
\]
where the last equality follows from Claim~\ref{claim:probability}.
\end{proof}

By comparing \eqref{eq:alpha} and \eqref{eq:beta} it follows that
if $p_1=\cdots=p_r=:p$ then $\beta(p,\ldots,p)=\alpha(p)^r$.
If $r=3$ then it is not so difficult to verify that 
$\beta(p_1,p_2,p_3)\leq\alpha(p_1)\alpha(p_2)\alpha(p_3)$, 
see \cite{KKT} for more details, and we have the following.  

\begin{lemma}\label{la2}
Let $0<p_1,p_2,p_3<\frac23$ and $t$ be a positive integer.  
If $\FF_1,\FF_2,\FF_3\subset 2^{[n]}$ are $3$-cross $t$-intersecting, then
\[
 \mu_{p_1}(\FF_1) \mu_{p_2}(\FF_2) \mu_{p_3}(\FF_3) \leq 
(\alpha(p_1)\alpha(p_2)\alpha(p_3))^t,
\]
where
\begin{align}\label{def:alpha}
 \alpha(p):=\frac12\left(\sqrt{\frac{1+3p}{1-p}}-1\right). 
\end{align}
\end{lemma}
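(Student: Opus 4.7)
The plan is to chain the theorem just established, which gives $\prod_{i=1}^{3}\mu_{p_i}(\FF_i)\leq\beta^t$ with $\beta\in(0,1)$ the root of \eqref{eq:beta} for $r=3$, together with the purely algebraic inequality $\beta(p_1,p_2,p_3)\leq\alpha(p_1)\alpha(p_2)\alpha(p_3)$. Raising this to the $t$-th power then yields the lemma, so the combinatorial hypothesis enters only through the first step and everything remaining is analytic.

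To control $\beta$, consider $f(X):=\prod_{i=1}^{3}(p_i+q_iX)-X$, a cubic with positive leading coefficient $q_1q_2q_3$, satisfying $f(0)=p_1p_2p_3>0$, $f(1)=0$, and $f'(1)=q_1+q_2+q_3-1>0$, where the last inequality uses $p_i<\tfrac23$. Hence $f$ crosses from positive to negative at its unique root $\beta\in(0,1)$, so writing $A:=\alpha(p_1)\alpha(p_2)\alpha(p_3)$, it suffices to show $f(A)\leq 0$, since this forces $A\geq\beta$.

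Set $\alpha_i:=\alpha(p_i)$. Factoring the spurious root $X=1$ out of the cubic $q_iX^3-X+p_i=0$ satisfied by $\alpha_i$ yields the quadratic relation $p_i=q_i\alpha_i(\alpha_i+1)$, whence
\[
p_i+q_iA=q_i\bigl(\alpha_i^2+\alpha_i+\alpha_1\alpha_2\alpha_3\bigr).
\]
The inequality $f(A)\leq 0$ therefore reduces to the symmetric claim
\[
q_1q_2q_3\prod_{i=1}^{3}\bigl(\alpha_i^2+\alpha_i+\alpha_1\alpha_2\alpha_3\bigr)\leq\alpha_1\alpha_2\alpha_3,
\]
which I would verify by direct expansion and regrouping.

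The main obstacle is precisely this final algebraic step: equality holds exactly when $\alpha_1=\alpha_2=\alpha_3$ (equivalently $p_1=p_2=p_3$, where comparing \eqref{eq:alpha} and \eqref{eq:beta} gives $\beta=\alpha^3$ directly), so no crude bound such as AM--GM on the whole product can close the gap. One must rearrange the expansion to exhibit a manifestly non-negative quantity, for instance a sum of squares in the differences $\alpha_i-\alpha_j$ with coefficients depending on the $q_i$. The detailed manipulation is carried out in \cite{KKT}, to which we refer.
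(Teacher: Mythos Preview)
Your proposal is correct and follows exactly the paper's own approach: apply the preceding theorem to obtain $\prod_i\mu_{p_i}(\FF_i)\leq\beta^t$, then invoke the algebraic inequality $\beta(p_1,p_2,p_3)\leq\alpha(p_1)\alpha(p_2)\alpha(p_3)$, deferring the detailed verification of the latter to \cite{KKT}. You supply somewhat more of the intermediate algebra (the sign analysis of the cubic $f$ and the factorisation giving $p_i=q_i\alpha_i(\alpha_i+1)$) than the paper, which simply states the inequality and cites \cite{KKT}, but the route is the same.
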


\subsection{Tools for the proof of Theorem~\ref{thm:main}}
Let $0<p_1<p_2<1$ be fixed. 
Let $\R^{[p_1,p_2]}$ denote the set of real-valued functions defined on
the interval $[p_1,p_2]:=\{x\in\R:p_1\leq x\leq p_2\}$.
We will bound a convex function $g\in\R^{[p_1,p_2]}$ by a linear function
connecting $(p_1,g(p_1))$ and $(p_2,g(p_2))$. To this end, define an 
operator $L_{p_1,p_2}:\R^{[p_1,p_2]}\to\R^{[p_1,p_2]}$ by
\[
 (L_{p_1,p_2}(g))(p):=\frac{g(p_2)-g(p_1)}{p_2-p_1}(p-p_1)+g(p_1).
\]
By definition we have the following.
\begin{claim}\label{claim:piecewiselinear}
Let $g\in\R^{[p_1,p_2]}$ be a convex function. Then 
$g(p)\leq (L_{p_1,p_2}(g))(p)$ for $p\in[p_1,p_2]$.
\end{claim}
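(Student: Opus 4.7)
The plan is to apply the defining property of convexity directly. For any fixed $p\in[p_1,p_2]$, I would write $p$ as a convex combination of the two endpoints by setting $\lambda:=(p-p_1)/(p_2-p_1)$, which lies in $[0,1]$; then $p=(1-\lambda)p_1+\lambda p_2$. Convexity of $g$ on $[p_1,p_2]$ gives immediately
\[
 g(p)=g((1-\lambda)p_1+\lambda p_2)\leq (1-\lambda)g(p_1)+\lambda g(p_2).
\]

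It then remains to identify the right-hand side with $(L_{p_1,p_2}(g))(p)$, which is a one-line algebraic rearrangement: rewriting $(1-\lambda)g(p_1)+\lambda g(p_2)=g(p_1)+\lambda(g(p_2)-g(p_1))$ and substituting $\lambda=(p-p_1)/(p_2-p_1)$ recovers exactly the definition of $L_{p_1,p_2}(g)$ given just before the claim.

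This is the classical observation that the chord through $(p_1,g(p_1))$ and $(p_2,g(p_2))$ lies on or above the graph of a convex function, so there is no genuine obstacle; the whole argument reduces to the definition of convexity together with a routine verification that the chord can be written in the point-slope form used to define the operator $L_{p_1,p_2}$. If the paper instead takes as its working definition of convexity some equivalent condition (for instance, non-decreasing difference quotients), then the only additional step would be to recall the standard equivalence with the midpoint/convex-combination inequality used above.
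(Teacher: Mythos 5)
Your proof is correct and matches the paper's intent: the paper simply asserts the claim ``by definition'' (a convex function lies below its chord), and your argument --- writing $p=(1-\lambda)p_1+\lambda p_2$, applying the convexity inequality, and checking that the chord expression coincides with $(L_{p_1,p_2}(g))(p)$ --- is exactly the routine verification the paper leaves implicit.
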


The function $\alpha=\alpha(p)$ defined by \eqref{def:alpha} is convex 
because $\frac{\partial^2\alpha(p)}{\partial p^2}
=\frac{6p}{(1+3p)^2q^2}\big(\frac{1+3p}q \big)^{1/2}>0$. 
Thus by Claim~\ref{claim:piecewiselinear} we have the following.
\begin{claim}\label{tilde-alpha}
For $\frac25\leq p\leq\frac12$ it follows that 
$\alpha(p)\leq\tilde\alpha(p)$, where
\begin{align*}
 \tilde\alpha(p)&:=(L_{\frac25,\frac12}(\alpha))(p)=
(-3 - 12\sqrt{5} + 5\sqrt{33})/6 + (30\sqrt{5} - 10\sqrt{33})p/6\\
&\approx -0.185 + 1.60607 p.
\end{align*}
\end{claim}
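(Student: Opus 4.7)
The plan is to derive Claim~\ref{tilde-alpha} as an immediate corollary of Claim~\ref{claim:piecewiselinear} applied to $\alpha$ on $[\tfrac{2}{5},\tfrac{1}{2}]$, and then to put the resulting linear interpolant into the stated closed form by evaluating $\alpha$ at the two endpoints. The convexity of $\alpha$ on this interval is already recorded in the paragraph above, since the displayed formula for $\partial^2\alpha/\partial p^2$ is strictly positive for all $p\in(0,1)$; consequently Claim~\ref{claim:piecewiselinear} with $p_1=\tfrac{2}{5}$ and $p_2=\tfrac{1}{2}$ yields $\alpha(p)\leq (L_{\frac25,\frac12}(\alpha))(p)$ for every $p\in[\tfrac{2}{5},\tfrac{1}{2}]$, which is the desired inequality once the right-hand side is identified with $\tilde\alpha(p)$.

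It therefore remains to put $L_{\frac25,\frac12}(\alpha)$ into the explicit form displayed. I would substitute the two endpoints into \eqref{def:alpha}: at $p=\tfrac{1}{2}$ the quantity $(1+3p)/(1-p)$ equals $5$, giving $\alpha(\tfrac{1}{2})=\tfrac{1}{2}(\sqrt{5}-1)$, and at $p=\tfrac{2}{5}$ it equals $\tfrac{11}{3}$, giving $\alpha(\tfrac{2}{5})=\tfrac{1}{2}(\sqrt{11/3}-1)=(\sqrt{33}-3)/6$. A short computation of the slope $(\alpha(\tfrac{1}{2})-\alpha(\tfrac{2}{5}))/(\tfrac{1}{2}-\tfrac{2}{5})$ produces $(30\sqrt{5}-10\sqrt{33})/6$, after which substituting into $\alpha(\tfrac{2}{5})+\text{slope}\cdot(p-\tfrac{2}{5})$ and collecting terms reproduces the intercept $(-3-12\sqrt{5}+5\sqrt{33})/6$; the numerical approximation $-0.185+1.60607\,p$ is then immediate from a direct evaluation.

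No substantive obstacle arises: the whole claim is a mechanical corollary once convexity is in hand, and the only mild hazard is arithmetic error when clearing the factor $1/(p_2-p_1)=10$ in the slope and combining the surds $\sqrt{5}$ and $\sqrt{33}$ cleanly.
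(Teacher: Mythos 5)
Your proposal is correct and follows the paper's own route exactly: convexity of $\alpha$ via the positivity of $\partial^2\alpha/\partial p^2$, then Claim~\ref{claim:piecewiselinear} with $p_1=\frac25$, $p_2=\frac12$, and the endpoint evaluations $\alpha(\frac12)=\frac12(\sqrt5-1)$, $\alpha(\frac25)=(\sqrt{33}-3)/6$ giving the stated slope and intercept. The arithmetic checks out, so nothing needs to be added.
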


Let $\AK(n,t,p)$ denote the maximum $p$-measure $\mu_p(\GG)$ 
of 2-wise $t$-intersecting families $\GG\subset 2^{[n]}$. 

\begin{thm}[Ahlswede and Khachatrian \cite{AK}]\label{thm:AK}
Let
\[
 \frac i{t+2i-1}\leq p\leq \frac{i+1}{t+2i+1}.
\]
Then $\AK(n,t,p)=\mu_p(\AA(n,t,i))$, where
\[
 \AA(n,t,i)=\{A\subset[n]:|A\cap[t+2i]|\geq t+i\}.
\]
Moreover, if $\frac i{t+2i-1}< p< \frac{i+1}{t+2i+1}$ 
(resp.\ $p=\frac i{t+2i-1}$) then 
$\mu_p(\GG)=\AK(n,t,p)$ if and only if $\GG\cong\AA(n,t,i)$
(resp.\ $\GG\cong \AA(n,t,i-1)$ or $\GG\cong\AA(n,t,i)$).
\end{thm}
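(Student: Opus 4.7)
The statement is the Ahlswede--Khachatrian theorem, cited from \cite{AK}, so it is not actually proved in this paper; however, a proof using the toolkit assembled in the preliminaries would run as follows.

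First, I would reduce to shifted inclusion-maximal $\GG$ via the operators $\sigma_{i,j}$, which preserve both the 2-wise $t$-intersecting property and $\mu_p$. The uniqueness portion then lifts to ``up to isomorphism'' through Lemma~\ref{shift-stable} applied with $a=t$ and $b=i$, since $\AA(n,t,i)=\{A:|A\cap[t+2i]|\geq t+i\}$ is exactly the family appearing in that lemma. Thus it suffices to prove $\mu_p(\GG)\leq\mu_p(\AA(n,t,i))$ for shifted inclusion-maximal $\GG$, with equality only at the stated families.

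Second, I would start from the soft bound given by the random-walk framework: the $r=2$ case of Lemma~\ref{la2} (or directly the construction in Claim~\ref{claim:walk} with $r=2$) applied to $\FF_1=\FF_2=\GG$ yields $\mu_p(\GG)^2\leq\alpha(p)^{2t}$, i.e.\ $\mu_p(\GG)\leq\alpha(p)^t$. The curve $\alpha(p)^t$ is tangent to the piecewise curve $\max_j\mu_p(\AA(n,t,j))$ at the phase-transition endpoints $p=\frac i{t+2i-1}$ but lies strictly above it in between, so the soft bound alone is insufficient. The sharp bound would be obtained by refining the walk count: condition on the first lattice point $(x,x+t)$ at which a walk $w(G_1,G_2)$ crosses the line $y=x+t$ (which exists by shiftedness), stratify by $x$, and bound each stratum separately. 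Combining these stratified bounds with an LP duality argument in the spirit of Theorem~\ref{thm3} (multipliers concentrated on the critical strata $x=0,1,\dots,i$) should collapse the soft curve to the Ahlswede--Khachatrian one, with Theorem~\ref{thm:weak duality} closing the loop.

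Third, across the interval $[\frac i{t+2i-1},\frac{i+1}{t+2i+1}]$ I would verify that index $i$ is genuinely optimal by a direct binomial computation: the difference $\mu_p(\AA(n,t,j+1))-\mu_p(\AA(n,t,j))$ is a polynomial in $p$ that changes sign exactly at $p=\frac{j+1}{t+2j+1}$, which pins down the correct $i$ in each sub-interval and explains why the doubled extremum $\AA(n,t,i-1)\cong\AA(n,t,i)$ occurs only at the endpoint $p=\frac i{t+2i-1}$, where both have equal measure. Uniqueness in the interior would follow from strict complementary slackness in the LP of step two.

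The main obstacle is the refinement in step two: upgrading the exponential bound $\alpha(p)^t$ to the piecewise-polynomial curve $\max_i\mu_p(\AA(n,t,i))$ is essentially the combinatorial heart of Ahlswede--Khachatrian's ``pushing-pulling'' argument transcribed into walk/LP language. Producing dual multipliers whose support depends on $i$ and verifying feasibility uniformly across each parameter interval is the real work; the endpoint uniqueness in particular requires that these multipliers become degenerate in a controlled way exactly at $p=\frac i{t+2i-1}$.
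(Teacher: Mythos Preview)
You are right that the paper does not prove Theorem~\ref{thm:AK}; it is quoted from \cite{AK} as a black-box tool in the preliminaries and used as input to the later arguments. So there is no ``paper's own proof'' to compare against.

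That said, your hypothetical proof has a real gap in step two, and you essentially admit as much. The random-walk bound $\mu_p(\GG)\leq\alpha(p)^t$ is strictly weaker than the Ahlswede--Khachatrian bound on the interior of each interval, and the machinery of this paper gives no mechanism for closing that gap. The LP duality arguments here work because the family is decomposed into several pieces with \emph{cross}-intersection constraints among them (Claims~\ref{claim:b2=b3=0}, \ref{claim:g1eqn}, etc.), and it is those linear constraints that the dual variables exploit. For a single $2$-wise $t$-intersecting family there is no analogous system of constraints arising from the walk stratification you describe: knowing the first crossing point of $w(G_1,G_2)$ does not translate into linear inequalities on $\mu_p$ of subfamilies of $\GG$ in any way that would yield the sharp piecewise-polynomial curve. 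The actual Ahlswede--Khachatrian proof uses a compression (``pushing--pulling'') argument on generating sets that has no LP interpretation of the kind used in this paper; the paper treats Theorem~\ref{thm:AK} as an external ingredient precisely because its own toolkit cannot reach it.

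In short: your reduction to shifted families and your endpoint analysis in step three are fine, but step two is not a proof sketch so much as a restatement of the difficulty. The honest answer here is simply that the theorem is cited, not proved, and the paper's methods are not designed to recover it.
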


Let 
\[
 f_t(p):=\limsup_{n\to\infty} \AK(n,t,p).
\]
By Katona's $t$-intersection theorem we have $f_t(\frac12)=\frac12$.
For $p<\frac12$, by Theorem~\ref{thm:AK}, we have
$\AK(n,t,p)=\max\{\mu_p(\AA(n,t,i)):i\leq\frac{n-t}2\}$, and
$\AK(n,t,p)$ is non-decreasing in $n$. In this case we have
\[
 f_t(p)=\lim_{n\to\infty}\AK(n,t,p)=\sum_{j=t+i}^{t+2i}
 \binom{t+2i}jp^jq^{t+2i-j},
\]
where $i=\left\lfloor\frac{(t-1)p}{1-2p}\right\rfloor$.
The function $f_t(p)$ is left-continuous at $p=\frac12$.

\begin{claim}\label{ft is convex}
Let $t\geq 2$ be fixed. Then $f_t(p)$ is a convex function in $p$.
\end{claim}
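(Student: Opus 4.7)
The plan is to write $f_t$ as a pointwise supremum of convex functions and invoke the elementary fact that such a supremum is itself convex. By Theorem~\ref{thm:AK}, for each $n$ and each $p\in[0,\frac12)$, the quantity $\AK(n,t,p)$ is attained by $\AA(n,t,i)$ for an appropriate $i=i(p)$. A direct calculation shows that
\[
P_i(p):=\mu_p(\AA(n,t,i))=\sum_{j=t+i}^{t+2i}\binom{t+2i}{j}p^jq^{t+2i-j}
\]
is independent of $n$ (as long as $n\geq t+2i$), and since each $\AA(n,t,i)$ is itself $2$-wise $t$-intersecting, we have $P_i(p)\leq\AK(n,t,p)$, with equality when $i=i(p)$. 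Letting $n\to\infty$ I therefore obtain
\[
f_t(p)=\sup_{i\geq 0}P_i(p)\qquad\text{for }p\in[0,\tfrac12),
\]
and the equality $f_t(\frac12)=\frac12=\lim_{i\to\infty}P_i(\frac12)$, the latter by the central limit theorem, extends the representation to $p=\frac12$.

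Next I show that each $P_i$ is convex on $[0,\frac12]$. Since $P_i(p)$ is the upper tail of a binomial distribution, the standard telescoping identity gives
\[
P_i'(p)=(t+2i)\binom{t+2i-1}{t+i-1}p^{t+i-1}(1-p)^{i},
\]
and for $i\geq 1$,
\[
P_i''(p)=(t+2i)\binom{t+2i-1}{t+i-1}p^{t+i-2}(1-p)^{i-1}\bigl[(t+i-1)-(t+2i-1)p\bigr].
\]
This is nonnegative exactly when $p\leq (t+i-1)/(t+2i-1)$, and a quick check shows that $(t+i-1)/(t+2i-1)\geq\frac12$ is equivalent to $t\geq 1$. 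Hence $P_i$ is convex on $[0,\frac12]$ for every $i\geq 1$, while $P_0(p)=p^t$ is convex on all of $[0,1]$ because $t\geq 2$.

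Finally, the supremum of a family of convex functions is convex: for any $p_1,p_2\in[0,\frac12]$, $\lambda\in[0,1]$, and any $i$, convexity of $P_i$ together with $P_i\leq f_t$ gives
\[
P_i(\lambda p_1+(1-\lambda)p_2)\leq\lambda P_i(p_1)+(1-\lambda)P_i(p_2)\leq\lambda f_t(p_1)+(1-\lambda)f_t(p_2),
\]
and taking the supremum over $i$ on the left yields convexity of $f_t$. No step is genuinely hard; the only point requiring a moment's thought is the representation $f_t=\sup_i P_i$, which is immediate from the Ahlswede--Khachatrian theorem.
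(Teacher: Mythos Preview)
Your proof is correct, and it takes a genuinely different route from the paper's. The paper argues piecewise: on each open interval $\bigl(\frac{i}{t+2i-1},\frac{i+1}{t+2i+1}\bigr)$ it computes the same second derivative you do and checks it is positive, and then at each break point $p_0=\frac{i}{t+2i-1}$ it compares the one-sided derivatives of the two competing polynomials $P_{i-1}$ and $P_i$, using the fact that they cross at $p_0$ with $P_{i-1}>P_i$ to the left and $P_{i-1}<P_i$ to the right. Your approach replaces this corner analysis by the single observation $f_t=\sup_{i\geq 0}P_i$, which follows directly from the Ahlswede--Khachatrian theorem, and then invokes the closure of convex functions under pointwise suprema. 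This is cleaner: the break-point comparison is absorbed automatically into the supremum argument, and you never need to identify which $P_i$ is active where. The only small extra cost is the verification at $p=\tfrac12$, where the supremum is not attained; you handle this correctly via $P_i(\tfrac12)\leq\AK(n,t,\tfrac12)\to\tfrac12$ and $\lim_{i\to\infty}P_i(\tfrac12)=\tfrac12$. Both proofs rely on the identical second-derivative computation; the difference is purely in how the pieces are glued together.
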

\begin{proof}
First suppose that $\frac i{t+2i-1}<p<\frac{i+1}{t+2i+1}$.
Then $f_t(p)=\sum_{t+i}^{t+2i}\binom{t+2i}jp^jq^{t+2i-j}=:g(p)$, and we have
\[
\frac{\partial^2}{\partial p^2}\,f_t(p)=
\frac{(2i+t)!}{(i+t-1)!\,i!}\,p^{t-2+i}q^{i-1} (i+t-1-(2i+t-1)p) > 0.
\]
Next let $p_0=\frac i{t+2i-1}$. If $p$ is slightly larger than $p_0$
then we have the same $f_t(p)=g(p)$ as above, and if $p$ is slightly smaller
than $p_0$ then $f_t(p)=\sum_{t+i-1}^{t+2i-2}p^jq^{t+2i-2}=:h(p)$.
Since $h(p)<g(p)$ for $p>p_0$ and $h(p)>g(p)$ for $p<p_0$, we see that
the left derivative of $f_t(p)$ at $p=p_0$ is smaller than that of
the right derivative.
\end{proof}

By Claim~\ref{claim:piecewiselinear} and Claim~\ref{ft is convex} we have
the following.
\begin{claim}\label{tilde-at}
For $\frac25\leq p\leq\frac12$ it follows that 
$\AK(n,t,p)\leq\tilde a_t(p)$, where
$\tilde a_t=L_{\frac25,\frac12}(f_t)$.
\end{claim}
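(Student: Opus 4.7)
The plan is to chain two inequalities: $\AK(n,t,p) \leq f_t(p)$ and $f_t(p) \leq \tilde a_t(p)$ on the interval $[\frac25,\frac12]$, both of which are essentially already in hand.

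First, I would establish $\AK(n,t,p) \leq f_t(p)$ for $p\in[\frac25,\frac12]$. For $p<\frac12$, Theorem~\ref{thm:AK} gives $\AK(n,t,p) = \max\{\mu_p(\AA(n,t,i)):i\leq\frac{n-t}{2}\}$, and since each $\AA(n,t,i)$ can be padded with an extra dummy element (exactly the construction $\GG\mapsto\GG'$ used at the start of the introduction to show $M_r^t(n,p)$ is non-decreasing in $n$) without changing the $p$-measure, $\AK(n,t,p)$ is non-decreasing in $n$. Hence $\AK(n,t,p)\leq \lim_{n\to\infty}\AK(n,t,p)=f_t(p)$. At the endpoint $p=\frac12$ the bound $\AK(n,t,\frac12)\leq\frac12=f_t(\frac12)$ is Katona's $t$-intersection theorem, already quoted in the paragraph defining $f_t$.

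Next, Claim~\ref{ft is convex} asserts that $f_t$ is a convex function of $p$ on the relevant range, so I would apply Claim~\ref{claim:piecewiselinear} with $p_1=\frac25$, $p_2=\frac12$, and $g=f_t$ to obtain $f_t(p)\leq (L_{\frac25,\frac12}(f_t))(p)=\tilde a_t(p)$ for every $p\in[\frac25,\frac12]$. Combining the two inequalities yields $\AK(n,t,p)\leq f_t(p)\leq\tilde a_t(p)$, which is the claim.

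There is no real obstacle: the argument is pure bookkeeping once Claims~\ref{claim:piecewiselinear} and~\ref{ft is convex} are available. The only point worth a moment's care is the passage from $\AK(n,t,p)$ to its limit $f_t(p)$, which relies on the (standard) monotonicity of $\AK(\cdot,t,p)$ in $n$ and on left-continuity of $f_t$ at $p=\frac12$; both are immediate from the Ahlswede--Khachatrian characterization.
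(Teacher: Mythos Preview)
Your proposal is correct and follows essentially the same approach as the paper, which simply states that the claim follows from Claim~\ref{claim:piecewiselinear} and Claim~\ref{ft is convex}; you have merely spelled out the implicit step $\AK(n,t,p)\leq f_t(p)$ (via monotonicity in $n$ for $p<\tfrac12$ and Katona for $p=\tfrac12$), which the paper recorded in the paragraph immediately preceding Claim~\ref{ft is convex}.
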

For convenience we record the $\tilde a_t$ which will be used to 
prove Theorem~\ref{thm:main}.
\begin{align*}
 \tilde a_2(p) & = \tfrac12 + (401(p-\tfrac12))/125\approx -1.104 + 3.208 p,\\
 \tilde a_3(p) & = \tfrac12 + (1565029(p-\tfrac12))/390625\approx -1.50324 + 4.00647 p,\\
 \tilde a_4(p) & = \tfrac12 + (5391614441(p-\tfrac12))/1220703125
\approx -1.70841 + 4.41681 p
,\\
 \tilde a_5(p) & = \tfrac12 + (17729648464189(p-\tfrac12))/3814697265625
\approx -1.82386 + 4.64772 p.
\end{align*}

\section{Proof of Theorem~\ref{thm2} and Theorem~\ref{thm:M3(n,p)}}
In this section we deduce Theorem~\ref{thm2} from Theorem~\ref{thm:M3(n,p)},
and then deduce Theorem~\ref{thm:M3(n,p)} from Theorem~\ref{thm:main}
whose proof is given in the next section.

\begin{proof}[Proof of Theorem~\ref{thm2}]
Let $\FF\subset 2^{[n]}$ be a non-trivial 3-wise intersecting family
with $\mu_p(\FF)=M_3(n,p)$. We may assume that $\FF$ is shifted and
inclusion maximal.
Since $\FF$ is {\em non-trivially} 3-wise intersecting, it is also $2$-wise
 $2$-intersecting, and so $M_3(n,p)\leq\AK(n,2,p)$. 

\begin{claim}
If $p<\frac13$ then $M_3(p)=p^2$.
\end{claim}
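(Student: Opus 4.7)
The plan is to prove the matching inequalities $M_3(p)\leq p^2$ and $M_3(p)\geq p^2$ for $p<\frac13$. The upper bound reduces to the Ahlswede--Khachatrian theorem, and the lower bound comes from an explicit construction.

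For the upper bound, the key observation has already been recorded just above the claim: any non-trivial $3$-wise intersecting family $\FF$ is automatically $2$-wise $2$-intersecting, so $M_3(n,p)\leq \AK(n,2,p)$. I would then invoke Theorem~\ref{thm:AK} with $t=2$ and $i=0$. The corresponding range is $0\leq p\leq\frac13$, on which $\AK(n,2,p)=\mu_p(\AA(n,2,0))$, and $\AA(n,2,0)=\{A\subset[n]:[2]\subset A\}$ has measure exactly $p^2$. Hence $M_3(n,p)\leq p^2$ for every $n$, and therefore $M_3(p)\leq p^2$.

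For the lower bound, I would exhibit a non-trivial $3$-wise intersecting family whose measure approaches $p^2$ as $n\to\infty$. A natural candidate is
\[
\FF_n=\{F\subset[n]:\{1,2\}\subset F,\ F\neq\{1,2\}\}\cup\{[n]\setminus\{1\},\ [n]\setminus\{2\}\}.
\]
Its measure is $p^2-p^2q^{n-2}+2pp^{n-2}q$, which tends to $p^2$ since $p,q\in(0,1)$. A short case check verifies the $3$-wise intersection property: triples that include at most one of the two exceptional sets share $\{1\}$ or $\{2\}$, while the delicate triple consisting of $[n]\setminus\{1\}$, $[n]\setminus\{2\}$, and some $F\supset\{1,2\}$ intersects in $F\cap[3,n]$, which is non-empty precisely because the exclusion $F\neq\{1,2\}$ forces $|F|\geq 3$. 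Non-triviality is immediate from $\{1,2\}\cap([n]\setminus\{1\})\cap([n]\setminus\{2\})=\emptyset$. Combining the two bounds gives $M_3(p)=p^2$.

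The only real subtlety is the exclusion $F\neq\{1,2\}$ in the construction, without which the triple $\{\{1,2\},[n]\setminus\{1\},[n]\setminus\{2\}\}$ would have empty intersection and destroy the $3$-wise intersecting property; once Theorem~\ref{thm:AK} is cited, the argument is otherwise routine.
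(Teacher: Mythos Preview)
Your proof is correct and essentially identical to the paper's: the upper bound via $M_3(n,p)\leq\AK(n,2,p)=p^2$ from Theorem~\ref{thm:AK} with $t=2$, $i=0$, and the lower bound via the same explicit family (the paper writes the first part as $\{F:[2]\subset F,\,F\cap[3,n]\neq\emptyset\}$, which is exactly your $\{F:\{1,2\}\subset F,\,F\neq\{1,2\}\}$). One cosmetic point: in your non-triviality line, $\{1,2\}\notin\FF_n$, so what you really mean is that $\bigcap\FF_n\subset\{1,2\}\cap([n]\setminus\{1\})\cap([n]\setminus\{2\})$, using that the intersection of all the ``star'' sets is $\{1,2\}$ once $n\geq 4$.
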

\begin{proof}
Let $p<\frac13$ be fixed. 
Then we have $\mu_p(\FF)=M_3(n,p)\leq\AK(n,2,p)=p^2$.
Moreover $\GG\cong\AA(n,2,0)$ is the only 2-wise 2-intersecting family 
with $\mu_p(\GG)=p^2$. Since $\AA(n,2,0)$ is not non-trivial 
3-wise intersecting, we get $M_3(n,p)<p^2$. 

On the other hand we can construct a non-trivial 3-wise 
intersecting family $\FF_1$ by
\[
 \FF_1=\{F\in[n]:[2]\subset F,\,F\cap[3,n]\neq\emptyset\}\sqcup
\{[n]\setminus\{1\}\}\sqcup\{[n]\setminus\{2\}\}.
\]
Then it follows that
\[
 \mu_p(\FF_1)=p^2(1-q^{n-2})+2p^{n-1}q \to p^2 \text{ as } n\to\infty.
\]
Thus we have $M_3(p)=p^2$ for $p<\frac13$.
\end{proof}

\begin{claim}
If $\frac13\leq p\leq\frac12$ then $M_3(p)=4p^3q+p^4$.
\end{claim}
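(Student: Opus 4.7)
The plan is to derive this case as an immediate corollary of Theorem~\ref{thm:M3(n,p)}, whose proof occupies the remainder of Section~3 via Theorem~\ref{thm:main}. The crucial observation is that $\BD_3(n)=\{F\in 2^{[n]}:|F\cap[4]|\geq 3\}$ depends only on the trace of $F$ on $[4]$, so the identity $\mu_p(\BD_3(n))=4p^3q+p^4$ already recorded in the introduction is independent of $n$.

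The argument then splits cleanly into two halves. For the lower bound I would verify directly that $\BD_3(n)$ itself is a non-trivial $3$-wise intersecting family for every $n\geq 4$: given $F_1,F_2,F_3\in\BD_3(n)$, each complement $[4]\setminus(F_i\cap[4])$ contains at most one element, so their union cannot cover $[4]$, forcing $|F_1\cap F_2\cap F_3\cap[4]|\geq 1$; non-triviality is witnessed by the four $3$-element subsets of $[4]$, whose common intersection is empty. Hence $M_3(n,p)\geq 4p^3q+p^4$, and passing to the limit $n\to\infty$ gives $M_3(p)\geq 4p^3q+p^4$. For the upper bound, Theorem~\ref{thm:M3(n,p)} directly supplies $M_3(n,p)\leq\mu_p(\BD_3(n))=4p^3q+p^4$ for every $n$, and this inequality is preserved under the limit defining $M_3(p)$.

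Combining the two directions yields $M_3(p)=4p^3q+p^4$ throughout $\tfrac13\leq p\leq\tfrac12$. There is no genuine obstacle specific to this subclaim: the entire difficulty on this interval is absorbed into Theorem~\ref{thm:M3(n,p)} (and thereby into Theorem~\ref{thm:main}). The only work this claim contributes on its own is to translate the finite-$n$ extremal equality into the statement about $M_3(p)$, which is immediate precisely because $\mu_p(\BD_3(n))$ does not depend on $n$ once $n\geq 4$.
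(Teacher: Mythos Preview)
Your proposal is correct and follows exactly the paper's approach: the paper's own proof of this claim is the single sentence ``This is an immediate consequence of Theorem~\ref{thm:M3(n,p)}.'' Your additional care in spelling out the lower bound (that $\BD_3(n)$ is itself non-trivial $3$-wise intersecting) and the $n$-independence of $\mu_p(\BD_3(n))$ is entirely accurate but goes beyond what the paper writes.
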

\begin{proof}
This is an immediate consequence of Theorem~\ref{thm:M3(n,p)}.
\end{proof}

\begin{claim}
If $\frac12<p\leq\frac23$ then $M_3(p)=p$.
\end{claim}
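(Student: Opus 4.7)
The argument splits into an explicit construction for the lower bound $M_3(p)\geq p$ and a decomposition combined with a $3$-cross intersecting estimate for the upper bound $M_3(p)\leq p$.

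For the lower bound, I would take
\[
\FF_n := \{F\subseteq[n] : 1\in F \text{ and } 2|F\cap[2,n]|>n-1\}\cup\{[2,n]\},
\]
i.e., sets containing $1$ with a strict majority in $[2,n]$, augmented by $[2,n]$. Three members that all contain $1$ clearly share the element $1$; if one of the three is $[2,n]$ then the other two star-like members $F_1,F_2$ satisfy
\[
|F_1\cap F_2\cap[2,n]|\geq 2\lceil n/2\rceil-(n-1)\geq 1
\]
by inclusion-exclusion, so $\FF_n$ is $3$-wise intersecting. Non-triviality follows because $1\notin[2,n]$ and, for each $j\geq 2$ and $n$ sufficiently large, the set $\{1\}\cup([2,n]\setminus\{j\})$ lies in $\FF_n$ and omits $j$. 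Finally
\[
\mu_p(\FF_n)=p\cdot\P[\mathrm{Bin}(n-1,p)>(n-1)/2]+p^{n-1}q \longrightarrow p
\]
as $n\to\infty$ by the weak law of large numbers, since $p>\tfrac12$.

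For the upper bound, let $\FF\subseteq 2^{[n]}$ be non-trivial $3$-wise intersecting; by Section~\ref{prelim} I may assume $\FF$ is shifted and inclusion-maximal. Non-triviality together with inclusion-maximality forces $[n]\setminus\{i\}\in\FF$ for every $i$. I partition $\FF=\FF(1)\sqcup\FF(\bar 1)$ according to membership of $1$, and set $\GG_1:=\{F\setminus\{1\}:F\in\FF(1)\}$ and $\GG_0:=\FF(\bar 1)$, both viewed in $2^{[2,n]}$. Inclusion-maximality yields $\GG_0\subseteq\GG_1$, and $3$-wise intersecting applied to two members of $\FF(1)$ plus one of $\FF(\bar 1)$ makes $\GG_1,\GG_1,\GG_0$ non-empty $3$-cross intersecting on $[2,n]$. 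Writing $a:=\mu_p(\GG_1)$ and $b:=\mu_p(\GG_0)$, I have $\mu_p(\FF)=pa+qb$ subject to $b\leq a$ and, via the $3$-cross bound, $2a+b\leq 3p$. A short case split closes the argument: if $a\leq p$ then $\mu_p(\FF)\leq pa+qa=a\leq p$; if $a>p$ then $b\leq 3p-2a$ gives
\[
\mu_p(\FF)\leq pa+q(3p-2a)=3pq+a(p-2q),
\]
which is non-increasing in $a$ because $p\leq\tfrac23$ makes $p-2q\leq 0$, and at $a=p$ the right side equals $p^2+pq=p$. Hence $\mu_p(\FF)\leq p$ in either case, so $M_3(p)\leq p$.

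The main obstacle is establishing the $3$-cross bound $2a+b\leq 3p$ in the range $p\in(\tfrac12,\tfrac23]$, since Theorem~\ref{thm3} is stated only for $p\in[\tfrac13,\tfrac12]$. The Gupta--Mogge--Piga--Sch\"ulke framework of Theorem~\ref{thm4} would bound the sum of the three measures by $\max_{a\geq 1}\{(1-q^a)+2p^a\}$, and a direct computation shows that this maximum is attained at $a=1$ with value $3p$ throughout $\tfrac13\leq p\leq\tfrac23$ (the value at $a=2$ is $2p+p^2<3p$ for $p<1$, and the function decreases toward $1$ as $a\to\infty$). The required extension thus reduces either to verifying that Theorem~\ref{thm4} remains valid for $r=3$ and $p\leq\tfrac23$, or to executing the LP-duality strategy described in the introduction on the triple $(\GG_1,\GG_1,\GG_0)$.
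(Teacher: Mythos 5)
Your lower-bound construction is fine and is essentially the paper's: the paper uses $\FF_2=\{F:1\in F,\ |F\cap[2,n]|\geq n/2\}\sqcup\{[2,n]\}$, whose measure tends to $p$ for $p>\frac12$. The problem is the upper bound, where you have a genuine gap that you yourself flag but do not close. Your argument hinges on the inequality $2a+b\le 3p$ for the non-empty $3$-cross intersecting triple $(\GG_1,\GG_1,\GG_0)$, and neither of your two suggested escape routes works. The hypothesis $p\le\frac12$ in Theorem~\ref{thm4} is not a technicality: for $\frac12<p<\frac23$ the conclusion you want is simply false for general non-empty $3$-cross intersecting families. Take a fixed $k\ge 1$ and, on a ground set of size $m\to\infty$, let $\FF_1=\{F:|F|\ge m-k\}$ and $\FF_2=\FF_3=\{F:|F|\ge \frac m2+k\}$. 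Any two members of $\FF_2,\FF_3$ meet in at least $2k$ elements and any member of $\FF_1$ misses at most $k$ elements, so the three families are $3$-cross intersecting; yet $\mu_p(\FF_2)=\mu_p(\FF_3)\to 1$ and $\mu_p(\FF_1)\to 0$, so the sum tends to $2>3p$ whenever $p<\frac23$. The same example can be arranged with $\FF_3\subset\FF_2$, so even your extra constraint $b\le a$ does not rescue the bound; to get $2a+b\le 3p$ for your particular $\GG_1,\GG_0$ you would need to exploit much more of the structure of $\FF$, which is essentially the original difficulty.

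The paper does not attempt any such reduction in this range: it quotes the known theorem (from \cite{FGL,FT2003,T2021}) that every $3$-wise intersecting family $\GG\subset 2^{[n]}$ satisfies $\mu_p(\GG)\le p$ for $p\le\frac23$ (with the additional rigidity statement for $p<\frac23$), which immediately gives $M_3(n,p)\le p$, and then matches it with the construction above. That measure bound at the threshold $p=\frac{r-1}{r}$ is a substantial result in its own right (proved via Hoffman-bound/spectral-type arguments), and it is not recoverable from the cross-intersecting tools of Section~2, which are all confined to $p\le\frac12$. So as written your proposal proves only $M_3(p)\ge p$; the upper bound needs either the cited theorem or a genuinely new argument, not an extension of Theorem~\ref{thm4}.
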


\begin{proof}
Let $\frac12<p\leq\frac23$ be fixed. It is known from 
\cite{FGL,FT2003,T2021} that $3$-wise intersecting families $\GG$ have 
$p$-measure at most $p$ for $p\leq\frac23$, and moreover if 
$\mu_p(\GG)=p$ then $|\bigcap\GG|=1$ for $p<\frac23$. Thus we have 
$M(n,p)<p$ for $\frac12<p<\frac23$ and $M(n,\frac23)\leq\frac23$.

On the other hand, let us define a non-trivial 3-wise intersecting family
$\FF_2$ by
\[
 \FF_2=\{F\in[n]:1\in F,\,|F\cap[2,n]|\geq n/2\}\sqcup\{[2,n]\}.
\]
Then it follows that, for fixed $p$,
\[
 \mu_p(\FF_2)=p\sum_{k\geq n/2}\binom{n-1}kp^kq^{n-1-k}+qp^{n-1}\to p \text{ as } n\to\infty.
\]
Thus we have $M(p)=p$ for $\frac12 <p\leq\frac23$.
\end{proof}

\begin{claim}
If $\frac23<p<1$ then $M_3(p)=1$.
\end{claim}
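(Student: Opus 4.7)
The upper bound $M_3(p)\leq 1$ is immediate from $\mu_p(\GG)\leq\mu_p(2^{[n]})=1$, so the task is purely constructive: exhibit a sequence of non-trivial $3$-wise intersecting families whose $p$-measures tend to~$1$.

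My plan is to use the natural ``majority-like'' construction
\[
\GG_n:=\{F\subset[n]:|F|>\tfrac{2n}{3}\}.
\]
The verification proceeds in three short steps. First, $\GG_n$ is $3$-wise intersecting: for any $F_1,F_2,F_3\in\GG_n$, inclusion-exclusion gives
\[
|F_1\cap F_2\cap F_3|\geq |F_1|+|F_2|+|F_3|-2n>2n-2n=0.
\]
Second, $\GG_n$ is non-trivial for $n\geq 4$: for each $i\in[n]$ the set $[n]\setminus\{i\}$ has size $n-1>\tfrac{2n}{3}$, hence belongs to $\GG_n$, so no element of $[n]$ lies in $\bigcap\GG_n$. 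Third, I will compute
\[
\mu_p(\GG_n)=\sum_{k>2n/3}\binom{n}{k}p^k q^{n-k}=\P(X_n>\tfrac{2n}{3}),
\]
where $X_n\sim\mathrm{Bin}(n,p)$; since $\mathbb{E}[X_n/n]=p>\tfrac23$, the weak law of large numbers (or a Chernoff bound) gives $\P(X_n>\tfrac{2n}{3})\to 1$ as $n\to\infty$.

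Combining, $\mu_p(\GG_n)\to 1$, hence $M_3(n,p)\to 1$, and together with the trivial upper bound we conclude $M_3(p)=1$ for $\tfrac23<p<1$. There is no substantive obstacle; this claim is genuinely the easy case, in contrast to the $\tfrac12<p\leq\tfrac23$ regime, whose sharp upper bound $M_3(p)\leq p$ relies on the nontrivial results of \cite{FGL,FT2003,T2021} invoked in the preceding claim.
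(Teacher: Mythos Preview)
Your proof is correct and follows essentially the same approach as the paper: the paper also uses the family $\FF_3=\{F\subset[n]:|F|>\tfrac23 n\}$ and notes that its measure tends to $1$. You have simply supplied the easy verifications (3-wise intersecting via the complement-union bound, non-triviality, and the law of large numbers for the binomial) that the paper leaves implicit.
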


\begin{proof}
Let $\frac23<p<1$ be fixed. Clearly we have $M(n,p)\leq 1$ and $M(p)\leq 1$.
Let us define a non-trivial 3-wise intersecting family $\FF_3$ by
\[
 \FF_3=\{F\subset[n]:|F|>\tfrac23n\}.
\]
Then $\mu_p(\FF_3)=\sum_{i>\frac23n}\binom nip^iq^{n-i}\to 1$ as
$n\to\infty$. Thus we have $M(p)=1$ for $p>\frac23$.
\end{proof}
This completes the proof of Theorem~\ref{thm2} assuming 
Theorem~\ref{thm:M3(n,p)}.
\end{proof}

\begin{proof}[Proof of Theorem~\ref{thm:M3(n,p)}]
Let $\FF\subset 2^{[n]}$ be a non-trivial 3-wise intersecting family.

First suppose that $\frac13\leq p<\frac25$. 
Note that $\FF$ is 2-wise 2-intersecting,
and $\AA(n,2,1)=\BD_3(n)$. Thus it follows from Theorem~\ref{thm:AK} that 
$\mu_p(\FF)\leq\mu_p(\BD_3(n))$. Moreover equality holds if and only if
$\FF\cong\BD_3(n)$ for $p>\frac13$. If $p=\frac13$ then 
$\mu_p(\FF)=\mu_p(\BD_3(n))$ if and only if $\FF\cong\BD_3(n)$ or
$\AA(n,2,0)$, but the latter is not non-trivial 3-wise intersecting, and so 
$\FF\cong\BD_3(n)$ must hold.

Next suppose that $\frac25\leq p\leq \frac12$. If $\FF$ is shifted then
by Theorem~\ref{thm:main} we have $\mu_p(\FF)\leq\mu_p(\BD_3(n))$ with
equality holding if and only if $\FF=\BD_3(n)$. The same inequality 
holds without assuming that $\FF$ is shifted (see the first paragraph
in Section~\ref{prelim}). In this case, by Lemma~\ref{shift-stable},
we have $\mu_p(\FF)=\mu_p(\BD_3(n))$ if and only if $\FF\cong\BD_3(n)$.
\end{proof}

\section{Proof of Theorem~\ref{thm3}}

\subsection{Proof of (\ref{ineq<3p}) of Theorem~\ref{thm3}}
Let $\frac13\leq p\leq \frac12$, and let $\FF_1,\FF_2,\FF_3$ be non-empty
3-cross intersecting families. By Theorem~\ref{thm4} with $r=3$ we have
$\sum_{i=1}^3\mu_p(\FF_i)\leq\max\{(1-q^a)+2p^a:a\in[n]\}$.
So we need to show that $f(p,a)\geq 0$, where 
\[
f(p,a):=3p-(1-q^a)-2p^a, 
\]
for all $\frac13\leq p\leq\frac12$ and all $1\leq a\leq n$.

If $a=1$ then $f(p,1)=3p-(1-q)-2p=0$, and we are done. So we may assume
that $2\leq a\leq n$, and we show that $f(p,a)>0$.

If $p=\frac13$ then $f(\frac13,a)=1-1+(\frac23)^a-2(\frac13)^a=
(\frac13)^a(2^a-2)>0$.
We claim that $f(p,a)$ is increasing in $p$, 
which yields $f(p,a)\geq f(\frac13,a)>0$ (for $a\geq 2$). We have
\begin{align}\label{eq:f'(a,p)}
 \frac{\partial f}{\partial p}(p,a)=3-aq^{a-1}-2ap^{a-1}. 
\end{align}
Fix $p$ and let $g(a)$ denote the RHS of 
\eqref{eq:f'(a,p)}. We have $g(2)=1-2p>0$ for $\frac13\leq p<\frac12$.
Next we show that $g(a)$ is increasing in $a$. For this we have
\[
 g(a+1)-g(a)=(ap-q)q^{a-1}+2(aq-p)p^{a-1},
\]
and we need to show that the RHS is positive.
Since $aq-p\geq ap-q$ it suffices to show that $ap-q\geq 0$,
or equivalently, $a\geq\frac {1-p}p$. Indeed $a\geq 2\geq\frac{1-p}p$ 
because $p\geq\frac13$.
Thus $g(a)$ is increasing in $a$, and $g(a)\geq g(2)>0$ as needed.
\qed

\subsection{Proof of (\ref{ineq<3p-e}) of Theorem~\ref{thm3}}
Recall that, for $i<j$, 
we write $[i,j]:=\{i,i+1,\ldots,j\}=[j]\setminus[i-1]$.

We divide $\FF_i=\{\{1\}\sqcup A:A\in\AA_i\}\cup\BB_i$, where
\begin{align*}
 \AA_i&:=\{F\setminus\{1\}:1\in F\in\FF_i\}\subset 2^{[2,n]},\\
 \BB_i&:=\{F:1\not\in F\in\FF_i\}\subset 2^{[2,n]}.
\end{align*}
Since $\FF_i\neq\emptyset$ is shifted, we have $\AA_i\neq\emptyset$.
Let $a_i=\mu_p(\AA_i:[2,n])>0$ and $b_i=\mu_p(\BB_i:[2,n])\geq 0$.
Then $\sum_{i=1}^3\mu_p(\FF_i)=\sum_{i=1}^3(pa_i+qb_i)$.
Without loss of generality we may assume that $b_1\geq b_2\geq b_3$.
If $b_1=0$ then $B_i=\emptyset$ for all $i$. 
In this case $1\in\bigcap F$, where the intersection is taken over all
$F\in\FF_1\cup\FF_2\cup\FF_3$, a contradiction.
So we may assume that $b_1\neq 0$, that is, $\BB_1\neq\emptyset$.
\begin{claim}
Let $\{i,j,k\}=[3]$.
\begin{enumerate}
\item If $\{\AA_i,\AA_j,\BB_k\}$ are all non-empty, then they are $3$-cross
intersecting, and $a_i+a_j+b_k\leq 3p$.
\item If $\{\AA_i,\BB_j,\BB_k\}$ are all non-empty, then they are $3$-cross
$2$-intersecting, and $a_i+b_j+b_k\leq 1$.
\end{enumerate} 
\end{claim}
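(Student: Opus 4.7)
The plan is straightforward: in each case we identify the intersection condition the decomposed families inherit and then invoke a bound already at our disposal.

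For part~(1), I would fix arbitrary representatives $A_i\in\AA_i$, $A_j\in\AA_j$, $B_k\in\BB_k$. Then $\{1\}\cup A_i\in\FF_i$, $\{1\}\cup A_j\in\FF_j$, $B_k\in\FF_k$, and their triple intersection is non-empty by 3-cross intersecting. Since $1\notin B_k$, the element $1$ drops out, leaving $A_i\cap A_j\cap B_k\neq\emptyset$. Hence $\AA_i,\AA_j,\BB_k\subset 2^{[2,n]}$ are non-empty 3-cross intersecting, and \eqref{ineq<3p} (just proved in the previous subsection, applied on the ground set $[2,n]$) delivers $a_i+a_j+b_k\leq 3p$.

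For part~(2), the only substantial step is upgrading from 3-cross intersecting to 3-cross 2-intersecting on $\AA_i,\BB_j,\BB_k$. I would argue by contradiction: if some triple $A\in\AA_i$, $B\in\BB_j$, $B'\in\BB_k$ satisfies $A\cap B\cap B'=\{x\}$ with $x\in[2,n]$, then the shiftedness of $\FF_j$ together with $1<x$, $1\notin B$, $x\in B$ forces $B^\ast:=(B\setminus\{x\})\cup\{1\}\in\FF_j$. Now replacing $B$ by $B^\ast$ in the triple $(\{1\}\cup A,\,B^\ast,\,B')\in\FF_i\times\FF_j\times\FF_k$ removes the singleton $x$ from the triple intersection and introduces no new common element (since $1\notin B'$), emptying the triple intersection and contradicting the 3-cross intersecting hypothesis on $\FF_i,\FF_j,\FF_k$. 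Thus $|A\cap B\cap B'|\geq 2$, so $\AA_i,\BB_j,\BB_k$ are non-empty 3-cross 2-intersecting in $2^{[2,n]}$, and Lemma~\ref{lemma:3w1i<=1} gives $a_i+b_j+b_k\leq 1$.

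The main (and only) subtlety is the single shifting step in part~(2), which uses the standard consequence of shiftedness recorded in Section~\ref{prelim}; everything else is bookkeeping about which ground element lies in which family, or a direct invocation of \eqref{ineq<3p} and Lemma~\ref{lemma:3w1i<=1}.
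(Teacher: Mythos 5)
Your proposal is correct and matches the paper's own argument: part (1) is the same reduction to the families $\FF_i,\FF_j,\FF_k$ being 3-cross intersecting plus inequality \eqref{ineq<3p}, and part (2) uses exactly the paper's shifting step $B\mapsto(B\setminus\{x\})\cup\{1\}\in\FF_j$ to empty the triple intersection and then invokes Lemma~\ref{lemma:3w1i<=1}. No substantive differences.
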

\begin{proof}
The item (1) follows from the assumption that 
$\FF_i,\FF_j,\FF_k$ are 3-cross intersecting, and \eqref{ineq<3p} of
Theorem~\ref{thm3}.

To show (2), suppose, to the contrary, that there exist three subsets
$A_i\in\AA_i$, $B_j\in\BB_j$, $B_k\in\BB_k$, and $x\in[2,n]$ such that
$\{x\}\supset A_i\cap B_j\cap B_k$. 
By definition we have $F_i:=\{1\}\cup A_i\in\FF_i$ and $F_k:=B_k\in\FF_k$.
By the shiftedness we have $F_j:=(B_j\setminus\{x\})\cup\{1\}\in\FF_j$. 
Then $F_i\cap F_j\cap F_k=\emptyset$, a contradiction.
Thus $\{\AA_i,\BB_j,\BB_k\}$ are 3-wise 2-intersecting, and the inequality
follows from Lemma~\ref{lemma:3w1i<=1}.
\end{proof}

Now we will show that $\sum_{i=1}^3\mu_p(\FF_i)\leq 3p-\epsilon_p$,
where $\epsilon_p=(2-3p)(3p-1)$.

\subsubsection{Case $\BB_2=\BB_3=\emptyset$}
Since $\{\BB_1,\AA_2,\AA_3\}$ are $3$-cross intersecting, any two of 
them are 2-cross intersecting. Then, by \eqref{ineq<3p} of
Theorem~\ref{thm3} and Lemma~\ref{lemma:2c1i}, we have the following.

\begin{claim}\label{claim:b2=b3=0}
\begin{enumerate}
\item $b_1+a_2+a_3\leq 3p$,
\item $b_1+a_2\leq 1$,
\item $b_1+a_3\leq 1$.
\end{enumerate}
\end{claim}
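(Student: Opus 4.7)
The plan is to reduce each of the three inequalities to results already proved earlier in the paper, namely the first inequality \eqref{ineq<3p} of Theorem~\ref{thm3} for item (1), and Lemma~\ref{lemma:2c1i}(i) for items (2) and (3). The whole task therefore boils down to exhibiting a non-empty $3$-cross intersecting triple of subfamilies of $2^{[2,n]}$ to which these earlier results can be applied, namely $(\BB_1, \AA_2, \AA_3)$.

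The key observation is the translation step. In the present case $\BB_2 = \BB_3 = \emptyset$, so every member of $\FF_2$ (respectively $\FF_3$) has the form $\{1\} \sqcup A_2$ with $A_2 \in \AA_2$ (respectively $\{1\} \sqcup A_3$). For any $(B, A_2, A_3) \in \BB_1 \times \AA_2 \times \AA_3$, the $3$-cross intersecting hypothesis applied to $(B, \{1\} \sqcup A_2, \{1\} \sqcup A_3) \in \FF_1 \times \FF_2 \times \FF_3$ gives
\[
\emptyset \neq B \cap (\{1\} \sqcup A_2) \cap (\{1\} \sqcup A_3) = B \cap A_2 \cap A_3,
\]
where the equality uses $1 \notin B$. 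Non-emptiness of $\BB_1$ holds by the standing assumption $b_1 > 0$ in this case of the argument, while $\AA_2, \AA_3 \neq \emptyset$ follows from $\FF_2, \FF_3 \neq \emptyset$ together with shiftedness. Thus $\BB_1, \AA_2, \AA_3 \subset 2^{[2,n]}$ are non-empty and $3$-cross intersecting.

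With this in hand, (1) is immediate from \eqref{ineq<3p} of Theorem~\ref{thm3} applied to the triple $(\BB_1, \AA_2, \AA_3)$, yielding $b_1 + a_2 + a_3 \leq 3p$. For (2) and (3) I would note that $3$-cross intersecting with all three families non-empty automatically implies pairwise $2$-cross intersecting (since $B \cap A_2 \cap A_3 \neq \emptyset$ forces $B \cap A_2 \neq \emptyset$ and $B \cap A_3 \neq \emptyset$). Then Lemma~\ref{lemma:2c1i}(i) applied to $(\BB_1, \AA_2)$ and to $(\BB_1, \AA_3)$ delivers $b_1 + a_2 \leq 1$ and $b_1 + a_3 \leq 1$, respectively.

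There is no real obstacle here: the claim is bookkeeping on top of already-established tools. The only small points I would verify are that the ground set being $[2,n]$ rather than $[n]$ is immaterial for the cited results (it is, since both \eqref{ineq<3p} and Lemma~\ref{lemma:2c1i}(i) are stated for an arbitrary ground set), and that the hypothesis $\frac{1}{3} \leq p \leq \frac{1}{2}$ needed by \eqref{ineq<3p} is satisfied under the assumption $\frac{1}{3} < p \leq \frac{1}{2}$ of the second part of Theorem~\ref{thm3} currently being proved.
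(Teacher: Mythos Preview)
Your proof is correct and follows essentially the same route as the paper: establish that $\BB_1,\AA_2,\AA_3$ are non-empty $3$-cross intersecting, deduce item (1) from \eqref{ineq<3p}, observe that this forces pairwise $2$-cross intersection, and deduce (2) and (3) from Lemma~\ref{lemma:2c1i}(i). (The paper's text just before the claim cites Lemma~\ref{lemma:3w1i<=1} for the pairwise bounds, but that appears to be a typo for Lemma~\ref{lemma:2c1i}, exactly as you use it.)
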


We solve the following linear programming problem:
\begin{description}
\item[maximize] $\sum_{i=1}^3\mu_p(\FF_i)=p(a_1+a_2+a_3)+qb_1$,
\item[subject to] (1)--(3) in Claim~\ref{claim:b2=b3=0}, 
and $0\leq a_i\leq 1$ for all $i$, $0\leq b_1\leq 1$.
\end{description}
The corresponding dual problem is
\begin{description}
\item[minimize] $3py_1+\sum_{i=2}^7 y_i$,
\item[subject to]
$y_4\geq p$,
$y_1+y_2+y_5\geq p$,
$y_1+y_3+y_6\geq p$,
$y_1+y_2+y_3+y_7\geq q$,
and $y_i\geq 0$ for all $i$. 
\end{description}
\begin{table}[h]
\caption{Case $\BB_2=\BB_3=\emptyset$}
\begin{tabular}{c|cccc|c}
&$a_1$&$a_2$&$a_3$&$b_1$&\\
\hline
$y_1$&&1&1&1&$3p$\\
$y_2$&&1&&1&$1$\\
$y_3$&&&1&1&$1$\\
$y_4$&1&&&&$1$\\
$y_5$&&1&&&$1$\\
$y_6$&&&1&&$1$\\
$y_7$&&&&1&$1$\\
\hline
&$p$&$p$&$p$&$q$&
\end{tabular}
\end{table}
A feasible solution is given by
$y_1=3p-1$,
$y_2=y_3=1-2p$,
$y_4=p$,
$y_5=y_6=y_7=0$,
and the corresponding value of the objective function is
\[
3p(3p-1)+2(1-2p)+p=2-6p+9p^2=3p-\epsilon_p. 
\]
Then it follows from Theorem~\ref{thm:weak duality} (weak duality)
that the same bound applies to the primal problem, and so 
$\sum_{i=1}^3\mu_p(\FF_i)\leq 3p-\epsilon_p$.

\subsubsection{Case $\BB_2\neq\emptyset$, $\BB_3=\emptyset$}
In this case $\{\AA_1,\BB_2,\AA_3\}$ and $\{\BB_1,\AA_2,\AA_3\}$
are both 3-cross intersecting, 
and $\{\BB_1,\BB_2,\AA_3\}$ are 3-cross 2-intersecting.
Thus we have the following.
\begin{claim}\label{claim:b2!=0,b3=0}
\begin{enumerate}
\item $b_1+a_2+a_3\leq 3p$,
\item $a_1+b_2+a_3\leq 3p$,
\item $b_1+b_2+a_3\leq 1$,
\item $b_1+a_2\leq 1$,
\item $a_1+b_2\leq 1$.
\end{enumerate}
\end{claim}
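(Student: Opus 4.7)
The plan is to derive each of the five inequalities by identifying the appropriate cross-intersection structure among the subfamilies $\AA_i,\BB_i$ and then invoking results already in hand: inequality~\eqref{ineq<3p} of Theorem~\ref{thm3} (the part just proved), Lemma~\ref{lemma:3w1i<=1}, and Lemma~\ref{lemma:2c1i}(i). Throughout I will need that every subfamily entering these results is non-empty: each $\AA_i$ is non-empty because $\FF_i$ is non-empty and shifted (a single shift moves the minimum of any $F\in\FF_i$ to $1$), while $\BB_1,\BB_2$ are non-empty by the standing hypothesis of this case.

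For (1), I would verify that $\BB_1,\AA_2,\AA_3$ is a non-empty 3-cross intersecting triple inside $2^{[2,n]}$: given $B_1\in\BB_1$ and $A_j\in\AA_j$ for $j=2,3$, the sets $B_1$, $\{1\}\cup A_2$, $\{1\}\cup A_3$ lie in $\FF_1,\FF_2,\FF_3$, and since $1\notin B_1$ their common intersection equals $B_1\cap A_2\cap A_3$, which is non-empty. Inequality~\eqref{ineq<3p} then yields $b_1+a_2+a_3\leq 3p$. Inequality~(2) follows by the symmetric argument applied to the triple $\AA_1,\BB_2,\AA_3$. For (4) and (5), the 3-cross intersecting triples just exhibited (together with $\AA_3\neq\emptyset$ used to fill the third slot) imply that $\{\BB_1,\AA_2\}$ and $\{\AA_1,\BB_2\}$ are 2-cross intersecting, so Lemma~\ref{lemma:2c1i}(i) delivers $b_1+a_2\leq 1$ and $a_1+b_2\leq 1$.

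The step that requires genuine work is (3), where the 3-cross intersecting condition must be upgraded to a 3-cross 2-intersecting one for $\BB_1,\BB_2,\AA_3$. Suppose, for contradiction, that $B_1\in\BB_1$, $B_2\in\BB_2$, $A_3\in\AA_3$ satisfy $B_1\cap B_2\cap A_3\subseteq\{x\}$ for some $x\in[2,n]$. By shiftedness, $B_2':=(B_2\setminus\{x\})\cup\{1\}\in\FF_2$; then the triple $B_1, B_2', \{1\}\cup A_3$ lies in $\FF_1\times\FF_2\times\FF_3$ and, using $1\notin B_1$, its intersection equals $B_1\cap(B_2\setminus\{x\})\cap A_3=\emptyset$, contradicting 3-cross intersection of the $\FF_i$. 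Hence $\BB_1,\BB_2,\AA_3$ is non-empty 3-cross 2-intersecting, and Lemma~\ref{lemma:3w1i<=1} gives $b_1+b_2+a_3\leq 1$. The main obstacle throughout is not computational but bookkeeping: every application of the auxiliary lemmas demands non-emptiness of each of the three slots, and this is exactly where the case hypothesis $\BB_2\neq\emptyset$ becomes indispensable.
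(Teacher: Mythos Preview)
Your proof is correct and follows essentially the same approach as the paper: the paper records items (1)--(3) as instances of a general claim (proved just before the case split) that $\{\AA_i,\AA_j,\BB_k\}$ are $3$-cross intersecting and $\{\AA_i,\BB_j,\BB_k\}$ are $3$-cross $2$-intersecting, and then obtains (4)--(5) from Lemma~\ref{lemma:2c1i}(i) via the induced $2$-cross intersecting pairs, exactly as you do. Your shifting argument for (3) is the same as the paper's; note only that when $x\notin B_2$ one should observe directly that $B_1\cap B_2\cap A_3=\emptyset$ already gives the contradiction without invoking shiftedness.
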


We solve the following linear programming problem:
\begin{description}
\item[maximize] $\sum_{i=1}^3\mu_p(\FF_i)=p(a_1+a_2+a_3)+q(b_1+b_2)$,
\item[subject to] (1)--(5) in Claim~\ref{claim:b2!=0,b3=0}, 
and $0\leq a_i\leq 1$ for all $i$, $0\leq b_j\leq 1$ for all $j$.
\end{description}
The corresponding dual problem is
\begin{description}
\item[minimize] $3p(y_1+y_2)+\sum_{i=3}^{10} y_i$,
\item[subject to]
$y_2+y_5+y_6\geq p$,
$y_1+y_4+y_7\geq p$,
$y_1+y_2+y_3+y_8\geq p$,
$y_1+y_3+y_4+y_9\geq q$,
$y_2+y_3+y_5+y_{10}\geq q$,
and $y_i\geq 0$ for all $i$. 
\end{description}
\begin{table}[h]
\caption{Case $\BB_2\neq\emptyset, \BB_3=\emptyset$}
\begin{tabular}{c|ccccc|c}
&$a_1$&$a_2$&$a_3$&$b_1$&$b_2$\\
\hline
$y_1$&&1&1&1&&$3p$\\
$y_2$&1&&1&&1&$3p$\\
$y_3$&&&1&1&$1$&1\\
$y_4$&&1&&$1$&&1\\
$y_5$&1&&&&$1$&1\\
$y_6$&1&&&&&$1$\\
$y_7$&&1&&&&$1$\\
$y_8$&&&1&&&$1$\\
$y_9$&&&&1&&$1$\\
$y_{10}$&&&&&1&$1$\\
\hline
&$p$&$p$&$p$&$q$&$q$&
\end{tabular}
\end{table}
A feasible solution is given by
$y_1=y_6=y_7=y_8=y_9=y_{10}=0$,
$y_2=3p-1$,
$y_3=y_5=1-2p$,
$y_4=p$,
and the corresponding value of the objective function is
\[
3p(3p-1)+2(1-2p)+p=3p-\epsilon_p.
\]
Thus, by the weak duality, we have 
$\sum_{i=1}^3\mu_p(\FF_i)\leq 3p-\epsilon_p$.

\subsubsection{Case $\BB_2\neq\emptyset$, $\BB_3\neq\emptyset$}
Let $\{i,j,k\}=[3]$. 
Then families $\{\AA_i,\AA_j,\BB_k\}$ are 3-cross intersecting, and 
families $\{\AA_i,\BB_j,\BB_k\}$ are 3-cross 2-intersecting.
Thus we have the following.
\begin{claim}\label{claim:b3!=0}
\begin{enumerate}
\item $b_1+a_2+a_3\leq 3p$,
\item $a_1+b_2+a_3\leq 3p$,
\item $a_1+a_2+b_3\leq 3p$,
\item $a_1+b_2+b_3\leq 1$,
\item $b_1+a_2+b_3\leq 1$,
\item $b_1+b_2+a_3\leq 1$.
\end{enumerate}
\end{claim}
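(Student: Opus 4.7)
The plan is to establish the two structural assertions preceding the enumerated inequalities, namely that $\AA_i,\AA_j,\BB_k$ are $3$-cross intersecting and $\AA_i,\BB_j,\BB_k$ are $3$-cross $2$-intersecting for every choice of $\{i,j,k\}=[3]$, and then apply \eqref{ineq<3p} of Theorem~\ref{thm3} and Lemma~\ref{lemma:3w1i<=1} in direct parallel with the two earlier subcases. All six families $\AA_1,\AA_2,\AA_3,\BB_1,\BB_2,\BB_3$ are non-empty under the current case hypothesis ($\AA_i\ne\emptyset$ as noted at the outset, and $\BB_j\ne\emptyset$ by assumption), so the non-emptiness requirements of those two results are met.

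For the $3$-cross intersecting assertion I would pick arbitrary $A_i\in\AA_i$, $A_j\in\AA_j$, $B_k\in\BB_k$ and lift them to $F_i:=\{1\}\cup A_i\in\FF_i$, $F_j:=\{1\}\cup A_j\in\FF_j$, $F_k:=B_k\in\FF_k$. Since $\FF_1,\FF_2,\FF_3$ are $3$-cross intersecting and $1\notin F_k$, every element of $F_i\cap F_j\cap F_k$ lies in $[2,n]$, so $A_i\cap A_j\cap B_k\ne\emptyset$.

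For the $3$-cross $2$-intersecting assertion I would argue by contradiction. Suppose some $A_i\in\AA_i$, $B_j\in\BB_j$, $B_k\in\BB_k$ satisfy $|A_i\cap B_j\cap B_k|\leq 1$. Setting $F_i:=\{1\}\cup A_i$, $F_j:=B_j$, $F_k:=B_k$, the $3$-cross intersecting property of the $\FF$'s combined with $1\notin F_j\cup F_k$ already forces $A_i\cap B_j\cap B_k\ne\emptyset$, hence this intersection is a singleton $\{x\}$ with $x\geq 2$ (since $1\notin B_j$). Because $\FF_j$ is shifted, $x\in B_j$, and $1\notin B_j$, the set $F_j':=(B_j\setminus\{x\})\cup\{1\}$ lies in $\FF_j$. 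But then
\[
F_i\cap F_j'\cap F_k\subseteq A_i\cap(B_j\setminus\{x\})\cap B_k=(A_i\cap B_j\cap B_k)\setminus\{x\}=\emptyset,
\]
contradicting the $3$-cross intersecting property. Hence $\AA_i,\BB_j,\BB_k$ are non-empty $3$-cross $2$-intersecting.

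Given the two structural assertions, inequalities (1)--(3) follow from \eqref{ineq<3p} of Theorem~\ref{thm3} applied to each triple $\AA_i,\AA_j,\BB_k$, and inequalities (4)--(6) follow from Lemma~\ref{lemma:3w1i<=1} (with $t=2$) applied to each triple $\AA_i,\BB_j,\BB_k$. The one subtle point is the shifting step, where one must verify that the distinguished element $x$ is at least $2$ (so that $\sigma_{1,x}$ is meaningful) and that the replacement set $F_j'$ really lies in $\FF_j$; both are guaranteed by $1\notin B_j$ and the shiftedness of $\FF_j$. Once the claim is established, what remains in this subcase — and this is the genuine computational obstacle — is to formulate the linear program that maximizes $p(a_1+a_2+a_3)+q(b_1+b_2+b_3)$ subject to these six inequalities together with $0\leq a_i,b_j\leq 1$, and to exhibit a feasible dual solution of value $3p-\epsilon_p$, in the style of the two dual tables already displayed; the weak duality theorem (Theorem~\ref{thm:weak duality}) then completes the proof of \eqref{ineq<3p-e} in this final case.
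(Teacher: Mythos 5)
Your proposal is correct and follows essentially the same route as the paper: the paper's first (unnumbered) claim in this section already establishes, for any $\{i,j,k\}=[3]$, that $\AA_i,\AA_j,\BB_k$ are $3$-cross intersecting (giving $a_i+a_j+b_k\leq 3p$ via \eqref{ineq<3p}) and that $\AA_i,\BB_j,\BB_k$ are $3$-cross $2$-intersecting via exactly your shifting argument $(B_j\setminus\{x\})\cup\{1\}\in\FF_j$ (giving $a_i+b_j+b_k\leq 1$ via Lemma~\ref{lemma:3w1i<=1}), and Claim~\ref{claim:b3!=0} is just its instantiation over all permutations, all families being non-empty in this case. Your write-up merely spells out the $3$-cross intersecting verification and the non-emptiness of $A_i\cap B_j\cap B_k$ a bit more explicitly than the paper does, which is fine.
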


We solve the following linear programming problem:
\begin{description}
\item[maximize] $\sum_{i=1}^3\mu_p(\FF_i)=p(a_1+a_2+a_3)+q(b_1+b_2+b_3)$,
\item[subject to] (1)--(6) in Claim~\ref{claim:b3!=0}, 
and $0\leq a_i\leq 1$ for all $i$, $0\leq b_j\leq 1$ for all $j$.
\end{description}
The corresponding dual problem is
\begin{description}
\item[minimize] $3p(y_1+y_2+y_3)+\sum_{i=4}^{12} y_i$,
\item[subject to]
$y_2+y_3+y_4+y_7\geq p$,
$y_1+y_3+y_5+y_8\geq p$,
$y_1+y_2+y_6+y_9\geq p$,
$y_1+y_5+y_6+y_{10}\geq q$,
$y_2+y_4+y_6+y_{11}\geq q$,
$y_3+y_4+y_5+y_{12}\geq q$,
and $y_i\geq 0$ for all $i$. 
\end{description}
\begin{table}[h]
\caption{Case $\BB_2\neq\emptyset,\BB_3\neq\emptyset$}
\begin{tabular}{c|cccccc|c}
&$a_1$&$a_2$&$a_3$&$b_1$&$b_2$&$b_3$\\
\hline
$y_1$&&1&1&1&&&$3p$\\
$y_2$&1&&1&&1&&$3p$\\
$y_3$&1&1&&&&$1$&$3p$\\
$y_4$&1&&&&1&1&1\\
$y_5$&&1&&1&&1&1\\
$y_6$&&&1&1&1&&1\\
$y_7$&1&&&&&&$1$\\
$y_8$&&1&&&&&$1$\\
$y_9$&&&1&&&&$1$\\
$y_{10}$&&&&1&&&$1$\\
$y_{11}$&&&&&1&&$1$\\
$y_{12}$&&&&&&1&$1$\\
\hline
&$p$&$p$&$p$&$q$&$q$&$q$&
\end{tabular}
\end{table}
A feasible solution is given by
$y_1=3p-1$,
$y_2=y_3=y_7=y_8=y_9=y_{10}=y_{11}=y_{12}=0$,
$y_4=p$,
$y_5=y_6=1-2p$,
and the corresponding value of the objective function is
\[
3p(3p-1)+p+2(1-2p)=3p-\epsilon_p.
\]
Thus we have $\sum_{i=1}^3\mu_p(\FF_i)\leq 3p-\epsilon_p$.

This complete the proof of \eqref{ineq<3p-e} of Theorem~\ref{thm3}. \qed

\section{Proof of Theorem~\ref{thm:main}}
Let $\frac25\leq p\leq\frac12$, and let $\FF\subset 2^{[n]}$ be a 
non-trivial 3-wise intersecting family. Suppose that $\FF$ is shifted, 
inclusion maximal, and $\FF\not\subset\BD_3(n)$. 
We may also assume that $\FF$ is size maximal (with respect to
3-wise intersection condition), that is, for every
$G\not\in\FF$, the larger family $\FF\cup\{G\}$ is no longer
3-wise intersecting. Our goal is to show that 
\[
\mu_p(\FF)<\mu_p(\BD_3(n))-0.0018. 
\]
For $I\subset[3]$ define $\FF_I\subset 2^{[n]}$ and 
$\GG_I\subset 2^{[4,n]}$ by
\begin{align*}
 \FF_I&=\{F\in\FF:F\cap[3]=I\},\\
 \GG_I&=\{F\setminus[3]:F\in\FF_I\}.
\end{align*}
Let $x_I=\mu_p(\GG_I:[4,n])$. Then we have
\[
 \mu_p(\FF_I)=p^{|I|}q^{3-|I|}x_I,
\]
and 
\begin{align}\label{mu(FF)}
 \mu_p(\FF) =\sum_{I\subset[3]} p^{|I|}q^{3-|I|}x_I. 
\end{align} 
For simplicity we often write $\GG_I$ or $x_I$ without braces and 
commas, e.g., we write $\GG_{12}$ to mean $\GG_{\{1,2\}}$.
Let $\bar I$ denote $[3]\setminus I$.

\begin{claim}\label{lemma:leadsto}
If $I,J\subset[3]$ satisfy $I\leadsto J$ then $\GG_I\subset\GG_J$ and 
$x_I\leq x_J$.
\end{claim}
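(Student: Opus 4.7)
The plan is to reduce Claim~\ref{lemma:leadsto} to the corresponding shift-closure property of $\FF$ itself, which is recorded in Section~\ref{prelim}: since $\FF$ is shifted and inclusion maximal, $F\in\FF$ and $F\leadsto F'$ imply $F'\in\FF$. So the whole task is to show that if $I\leadsto J$ in $[3]$ and $G\in\GG_I$, then $G\in\GG_J$, i.e.\ that $F':=J\cup G$ belongs to $\FF$. By construction $F:=I\cup G\in\FF$ and $G\subset[4,n]$, so it suffices to prove that $F\leadsto F'$ as subsets of $[n]$; once this is done, $F'\in\FF$ and clearly $F'\cap[3]=J$, $F'\setminus[3]=G$, so $F'\in\FF_J$ and $G\in\GG_J$. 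The conclusion $x_I\le x_J$ is then immediate from $\GG_I\subset\GG_J$ and the monotonicity of $\mu_p$ with respect to set inclusion of families.

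To verify $F\leadsto F'$, I would first note that $|F|=|I|+|G|\le|J|+|G|=|F'|$, using $|I|\le|J|$ (contained in the definition of $I\leadsto J$, the case $I=\emptyset$ being trivial). Then I would compare the $i$th smallest elements of $F$ and $F'$ for $i\le|F|$ by splitting into three ranges, exploiting the fact that every element of $I,J$ is at most $3$ while every element of $G$ is at least $4$:
\begin{itemize}
\item For $1\le i\le|I|$, both $i$th smallest elements lie in $I$ and $J$ respectively, and the required inequality is exactly the hypothesis $I\leadsto J$.
\item For $|I|<i\le|J|$, the $i$th smallest of $F$ is an element of $G$, hence $\ge 4$, whereas the $i$th smallest of $F'$ is still an element of $J\subset[3]$, hence $\le 3$.
\item For $|J|<i\le|F|$, both $i$th smallest elements belong to $G$, namely the $(i-|I|)$th and $(i-|J|)$th smallest of $G$; since $i-|I|\ge i-|J|$, the former is at least the latter.
\end{itemize}
In all three cases the $i$th smallest of $F$ is at least the $i$th smallest of $F'$, so $F\leadsto F'$ as needed.

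There is no real obstacle here: the content is purely combinatorial bookkeeping, essentially the observation that appending a common tail $G\subset[4,n]$ to $I,J\subset[3]$ preserves the shifting relation because the ``displaced'' positions $|I|<i\le|J|$ trade a small element of $J$ (in $F'$) for a larger element of $G$ (in $F$), which only helps the inequality. The only point to be slightly careful about is the edge case $I=\emptyset$ in the definition of $\leadsto$, but the same three-case analysis handles it verbatim (the first range is empty).
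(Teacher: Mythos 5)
Your proof is correct and follows the same route as the paper: reduce the claim to the shift-closure property of the shifted, inclusion-maximal family $\FF$ by showing $I\cup G\leadsto J\cup G$, then conclude $J\cup G\in\FF_J$ and hence $\GG_I\subset\GG_J$ and $x_I\le x_J$. The paper merely asserts this step in one line, while you verify the elementwise comparison explicitly, so your write-up is just a more detailed version of the same argument.
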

\begin{proof}
Suppose that $G\in\GG_I$. Then $I\cup G\in\FF_I$.
Since $\FF$ is shifted, inclusion maximal, and $I\leadsto J$, we have that
$J\cup G\in\FF_J$, and so $G\in\GG_J$.
Thus $\GG_I\subset\GG_J$, and so $x_I\leq x_J$.
\end{proof}

Applying Claim~\ref{lemma:leadsto} to the diagram in Figure~\ref{fig1},
we get Claim~\ref{poset}.
\begin{figure}[h]
\includegraphics[scale=1.2]{./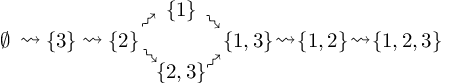}  
\caption{Poset induced by shifting and inclusion}\label{fig1}
\end{figure}

\begin{claim}\label{poset}
We have
$x_\emptyset\leq x_3\leq x_2\leq x_1\leq x_{13}\leq x_{12}\leq x_{123}$,
and $x_2\leq x_{23}\leq x_{13}$.
\end{claim}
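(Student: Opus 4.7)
The plan is to invoke Claim~\ref{lemma:leadsto} repeatedly: for each inequality $x_I \leq x_J$ appearing in the claim, it suffices to exhibit the shifting relation $I \leadsto J$, and the asserted chains will then follow immediately by transitivity of $\leadsto$ (and of $\subset$). So there is nothing deep here; the work is purely to check each comparison against the definition of $G \leadsto H$, namely that $G = \emptyset$ or else $|G| \leq |H|$ and the $i$th smallest element of $G$ is at least the $i$th smallest element of $H$ for every $i \leq |G|$.

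First I would handle the long chain. The relation $\emptyset \leadsto \{3\}$ holds trivially by the first clause of the definition. For $\{3\} \leadsto \{2\}$ and $\{2\} \leadsto \{1\}$, both sides have size one and $3 \geq 2 \geq 1$. The step $\{1\} \leadsto \{1,3\}$ is immediate since $|\{1\}| \leq |\{1,3\}|$ and the smallest element of $\{1\}$ is $1$, which equals the smallest element of $\{1,3\}$. For $\{1,3\} \leadsto \{1,2\}$ both sets have size two, the smallest elements agree at $1$, and the second smallest satisfies $3 \geq 2$. Finally $\{1,2\} \leadsto \{1,2,3\}$ is clear from $|\{1,2\}| \leq 3$ with elementwise comparison $1 \geq 1$ and $2 \geq 2$. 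Chaining these gives the main inequality.

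For the secondary chain, $\{2\} \leadsto \{2,3\}$ holds because $|\{2\}| \leq |\{2,3\}|$ and the minimum element $2$ matches, while $\{2,3\} \leadsto \{1,3\}$ holds because both sides have size two with $2 \geq 1$ and $3 \geq 3$. Applying Claim~\ref{lemma:leadsto} to each of these shift relations produces $x_2 \leq x_{23} \leq x_{13}$, completing the proof.

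I do not anticipate any real obstacle; the only thing to be careful about is matching the convention used in the paper (``the $i$th smallest element of $G$ is greater than or equal to that of $H$''), and remembering that this needs to hold for $i \leq |G|$ so that smaller $G$ can shift into larger $H$. All pairs above satisfy the definition on the nose, so Claim~\ref{lemma:leadsto} delivers the desired containments and measure inequalities.
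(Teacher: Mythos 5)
Your proof is correct and is exactly the paper's argument: the paper just applies Claim~\ref{lemma:leadsto} to the shift relations among subsets of $[3]$ (encoded in the diagram of Figure~\ref{fig1}), which are precisely the pairwise relations $I\leadsto J$ you verified. Nothing further is needed.
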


Let $I_1,I_2,I_3\subset[3]$. Define a $3\times 3$ matrix 
$M=M(I_1,I_2,I_3)=(m_{i,j})$ by
\[
 m_{i,j}=\begin{cases}
	  1 & \text{if }j\in I_i,\\
	  0 & \text{if }j\not\in I_i.
	 \end{cases}
\]
Then $I_1\cap I_2\cap I_3=\emptyset$ if and only if every column of $M$
contains (at least one) $0$. In this case we say that $M$ is acceptable,
and let $\tau:=7-s$, where $s$ is the total sum of $m_{i,j}$.

\begin{claim}\label{M}
Let $M(I_1,I_2,I_3)$ be acceptable. 
If $\{\GG_{I_1},\GG_{I_2},\GG_{I_3}\}$ are all non-empty, then
they are $3$-cross $\tau$-intersecting,
and any two of them are $2$-cross $\tau$-intersecting.
\end{claim}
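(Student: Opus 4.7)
The plan is to establish the 3-cross $\tau$-intersecting property by contradiction via an iterated shifting construction, and then obtain the 2-cross property as an immediate corollary of non-emptiness of the third family.

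First I will set $F_i:=I_i\cup G_i\in\FF$ for $i=1,2,3$ and $D:=G_1\cap G_2\cap G_3$. Acceptability of $M$ means $I_1\cap I_2\cap I_3=\emptyset$, so $\bigcap_i F_i=D$, and 3-wise 1-intersection of $\FF$ forces $|D|\geq 1$. Assuming for contradiction that $|D|\leq\tau-1=6-s$, I will produce three sets $F_i^*\in\FF$ with empty triple intersection, contradicting the 3-wise 1-intersecting property of $\FF$. The construction hinges on choosing extensions $I_i^*\supset I_i$ inside $[3]$ so that the augmented matrix $M^*:=M(I_1^*,I_2^*,I_3^*)$ is still acceptable and $\sum_i|I_i^*\setminus I_i|=|D|$. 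Such a choice is feasible: column $j$ of $M$ contains $a_j\geq 1$ zeros, so up to $a_j-1$ zeros per column may be flipped to $1$ while preserving acceptability, giving a total budget of $\sum_j(a_j-1)=6-s\geq|D|$ flips, to be distributed in any way one pleases.

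Writing $k_i:=|I_i^*\setminus I_i|$, so that $\sum k_i=|D|$, I then partition $D=D_1\sqcup D_2\sqcup D_3$ with $|D_i|=k_i$, which is possible since $D\subset G_i$ for each $i$. Iterated application of shiftedness---each step swapping some $d\in D_i\subset[4,n]$ with some $j\in I_i^*\setminus I_i\subset[3]$ (with $j<d$, hence a legal shift)---produces $F_i^*:=I_i^*\cup(G_i\setminus D_i)\in\FF$. The intersection $F_1^*\cap F_2^*\cap F_3^*$ is empty on $[3]$ by acceptability of $M^*$, and empty on $[4,n]$ because every $x\in D$ lies in some $D_i$ and therefore not in $G_i\setminus D_i$; this is the desired contradiction. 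For the 2-cross statement, given $G_i\in\GG_{I_i}$ and $G_j\in\GG_{I_j}$, I pick any $G_k\in\GG_{I_k}$ (which exists by hypothesis) and invoke the 3-cross bound: $|G_i\cap G_j|\geq|G_i\cap G_j\cap G_k|\geq\tau$.

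The main technical point to verify is the legality of the shifts, which requires $|G_i|\geq k_i$. I expect to handle this via the observation that every $F\in\FF$ satisfies $|F|\geq 3$: since $\FF$ is non-trivial 3-wise 1-intersecting, it is 2-wise 2-intersecting, so $|F|\geq 2$; and $|F|=2$ would force $F\subset F'$ for every $F'\in\FF$, hence $F\subset\bigcap\FF=\emptyset$, a contradiction. Therefore $|G_i|\geq 3-|I_i|\geq|I_i^*|-|I_i|=k_i$, using $|I_i^*|\leq 3$. Beyond this, only routine bookkeeping is needed to formalize the flip distribution on $M$ and the corresponding partition of $D$.
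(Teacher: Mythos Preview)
Your argument is correct and follows the same shifting idea as the paper: replace elements of $D=G_1\cap G_2\cap G_3$ by elements of $[3]$ while keeping the matrix acceptable, producing three sets in $\FF$ with empty triple intersection. The paper actually gives only two worked examples in lieu of a general proof; your flip-budget formulation $\sum_j(a_j-1)=6-s\geq|D|$ supplies the general argument that the paper omits. One minor point: your closing paragraph on $|G_i|\geq k_i$ is unnecessary, since the elements you remove from $G_i$ lie in $D_i\subset D\subset G_i$ by construction, so the legality of each shift (and the existence of the partition of $D$) is automatic without invoking $|F|\geq 3$.
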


\begin{proof}
Let us start with two concrete examples.

First example is the case $I_1=I_2=\{1\}$, $I_3=\{2,3\}$, and so
$s=1+1+2=4$, $\tau=7-4=3$.
We show that $\{\GG_1,\GG_1,\GG_{23}\}$ are 3-cross 3-intersecting.
Suppose the contrary.
Then there are $G_1,G_1'\in\GG_1$, $G_{23}\in\GG_{23}$ and 
$x,y\in[4,n]$ such that $G_1\cap G_1'\cap G_{23}\subset\{x,y\}$.
Let $F_{12}=\{1,2\}\cup(G_1\setminus\{x\})$,
$F_{13}=\{1,3\}\cup(G_1'\setminus\{y\})$, and
$F_{23}=\{2,3\}\cup G_{23}\in\FF_{23}$.
By the shiftedness we have $F_{12}\in\FF_{12}$ and $F_{13}\in\FF_{13}$.
But $F_{12}\cap F_{13}\cap F_{23}=\emptyset$, a contradiction.

Next example is the case $I_1=I_2=I_3=\emptyset$, and so $\tau=7$.
We show that $\{\GG_\emptyset,\GG_\emptyset,\GG_\emptyset\}$ are 3-cross 
7-intersecting, that is, $\GG_\emptyset$ is 3-wise 7-intersecting.
Suppose the contrary.
Then there are $F,F',F''\in\FF_\emptyset$ and $x_1,\ldots,x_6\in[4,n]$ 
such that $F\cap F'\cap F''\subset\{x_1,\ldots,x_6\}$.
Let $F_{12}:=(F\setminus\{x_1,x_2\})\cup\{1,2\}\in\FF_{12}$, 
$F'_{13}:=(F'\setminus\{x_3,x_4\})\cup\{1,3\}\in\FF_{13}$, and
$F''_{23}:=(F'\setminus\{x_5,x_6\})\cup\{2,3\}\in\FF_{23}$.
Then we have $F_{12}\cap F'_{13}\cap F''_{23}=\emptyset$, a contradiction.

The following proof for the general case is given by one of the referees.
We assume by contradiction that there are sets $G_i\in\GG_I$ such that 
$|G_1\cap G_2 \cap G_| \leq \tau-1 = 6 - s$. 
The matrix $M$ has s ``taken'' places out of 9. ``Reserve'' one empty spot 
in each column (these are available since the matrix is acceptable). 
We are left with $6-s$ empty spots, say $r_i$ in row $i$. 
Shift $r_i$ elements from $G_i$ to the empty spots on row $i$ to construct 
a new set $F_i$, no longer belonging to $G_{I_i}$. 
By construction, $F_1, F_2, F_3$ have empty intersection.
\end{proof}

\subsection{Case $\GG_1=\emptyset$}
In this case, by Claim~\ref{poset}, we have 
$\GG_\emptyset=\GG_3=\GG_2=\GG_1=\emptyset$.

First suppose that $\GG_{23}\neq\emptyset$.
If $\bigcap G\neq\emptyset$, where the intersection is taken over all $G\in\GG_{12}\cup\GG_{13}\cup\GG_{23}$, then since the family is shifted,
$4\in\bigcap G$.
Since $\FF\subset\FF_{23}\cup\FF_{13}\cup\FF_{12}\cup\FF_{123}$, we have
$|F\cap[4]|\geq 3$ for every $F\in\FF$. 
This means that $\FF\subset\BD_3(n)$, which contradicts our assumption. 
Therefore we have $\bigcap G=\emptyset$. 
Moreover, the families $\GG_{12},\GG_{13},\GG_{23}$ are $3$-cross 
intersecting by Claim~\ref{M}. 
Thus we can apply \eqref{ineq<3p-e} of Theorem~\ref{thm3} with 
$\min \{\epsilon_p:\frac25\leq p\leq\frac12\}=\frac4{25}$ to get
\[
x_{12}+x_{13}+x_{23}\leq 3p-\epsilon_p\leq 3p-0.16.
\]
Next suppose that $\GG_{23}=\emptyset$. By Lemma~\ref{lemma:2c1i}
we have $x_{12}+x_{13}+x_{23}=x_{12}+x_{13}\leq 1\leq 3p-0.2$. 
Thus in both cases we have $x_{12}+x_{13}+x_{23}\leq 3p-0.16$. 
Then it follows from \eqref{mu(FF)} that
\begin{align*}
 \mu_p(\FF) &= p^2q(x_{12}+x_{13}+x_{23})+p^3x_{123}\\
&\leq p^2q(3p-0.16)+p^3\\
&=4p^3q+p^4-0.16p^2q.
\end{align*}
Noting that $\mu_p(\BD_3(n))=4p^3q+p^4$ and $p^2q\geq\frac{12}{125}=0.96$
for $\frac25\leq p\leq \frac12$, we have 
\[
\mu_p(\FF)\leq 4p^3q+p^4-0.16\cdot 0.96<\mu_p(\BD_3(n))-0.01, 
\]
as needed. 

\subsection{Case $\GG_1\neq\emptyset$ and $\GG_2=\emptyset$}
If $\GG_{23}=\emptyset$ then $[2,n]\not\in\FF$. This means that there are 
$F,F'\in\FF$ such that $F\cap F'=\{1\}$. (Otherwise all $F,F'\in\FF$ 
intersect on $[2,n]$ and we could add $[2,n]$ to $\FF$, which contradicts 
the assumption that $\FF$ is size maximal.)
In this case all subsets in $\FF$ must contain $1$, which contradicts the 
assumption that $\FF$ is non-trivial. So we may assume that 
$\GG_{23}\neq\emptyset$. Then both $\FF_1$ and $\FF_{23}$ are non-empty,
and so the families $\FF_{13}, \FF_{12}, \FF_{123}$ are also non-empty
by Claim~\ref{poset}.

By Claim~\ref{M} we have the following. 

\begin{claim}\label{claim:g1}
\begin{enumerate}
\item\label{g1-1} $\{\GG_1,\GG_1,\GG_{23}\}$ are $3$-cross $3$-intersecting,
and so $\GG_1$ is $2$-wise $3$-intersecting.
\item\label{g1-2} $\{\GG_{12},\GG_{13},\GG_{23}\}$ are $3$-cross intersecting, 
and so $\{\GG_{12},\GG_{13}\}$ are $2$-cross intersecting.
\item\label{g1-3} 
$\{\GG_1,\GG_{23},\GG_{123}\}$ are $3$-cross intersecting, and so
both $\{\GG_1,\GG_{123}\}$ and $\{\GG_{23},\GG_{123}\}$ are $2$-cross
intersecting.
\item\label{g1-4} $\{\GG_{1},\GG_{12},\GG_{23}\}$ are $3$-cross 
$2$-intersecting.
\item\label{g1-5} 
$\{\GG_{1},\GG_{23},\GG_{23}\}$ are $3$-cross $2$-intersecting, and so
$\GG_{23}$ is $2$-wise $2$-intersecting.
\end{enumerate}
\end{claim}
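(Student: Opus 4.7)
My plan is to deduce every item of Claim~\ref{claim:g1} as a direct application of Claim~\ref{M}, each time picking a triple $(I_1,I_2,I_3)$ of subsets of $[3]$ whose three-way intersection is empty (so that $M(I_1,I_2,I_3)$ is acceptable) and reading off the threshold $\tau=7-(|I_1|+|I_2|+|I_3|)$. Claim~\ref{M} then delivers both the $3$-cross $\tau$-intersecting conclusion and the $2$-cross $\tau$-intersecting conclusion for any two of the three factors, which is exactly what the ``and so \dots'' clauses of each item demand.

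The triples are dictated by the statements themselves. For item (\ref{g1-1}) I would use $(\{1\},\{1\},\{2,3\})$, giving $s=4$ and $\tau=3$. For (\ref{g1-2}), the triple $(\{1,2\},\{1,3\},\{2,3\})$, giving $s=6$ and $\tau=1$. For (\ref{g1-3}), the triple $(\{1\},\{2,3\},\{1,2,3\})$, again with $s=6$ and $\tau=1$. For (\ref{g1-4}), $(\{1\},\{1,2\},\{2,3\})$, with $s=5$ and $\tau=2$. And for (\ref{g1-5}), $(\{1\},\{2,3\},\{2,3\})$, with $s=5$ and $\tau=2$. In each case the three-way intersection is empty by direct inspection, so the underlying matrix $M(I_1,I_2,I_3)$ is acceptable.

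To invoke Claim~\ref{M} I also need each factor family in the triple to be non-empty. The running assumptions of this subsection supply $\GG_1\neq\emptyset$ and $\GG_{23}\neq\emptyset$, and Claim~\ref{poset} propagates non-emptiness along the shifting/inclusion poset to give $\GG_{12},\GG_{13},\GG_{123}\neq\emptyset$ as well (since $\{1\}\leadsto\{1,2\},\{1,3\},\{1,2,3\}$ implies $\GG_1\subset\GG_{12},\GG_{13},\GG_{123}$ via Claim~\ref{lemma:leadsto}). Hence every triple above consists of non-empty families and Claim~\ref{M} applies.

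There is no genuine obstacle here: the step is essentially combinatorial book-keeping. The only care needed is in matching each triple to the item it produces, confirming acceptability, and computing $\tau$ correctly; everything else, including the $2$-cross $\tau$-intersecting consequences for any two of the three families, is already packaged inside Claim~\ref{M}.
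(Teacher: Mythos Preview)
Your proposal is correct and matches the paper's approach exactly: the paper's entire proof is the single line ``By Claim~\ref{M} we have the following,'' and you have spelled out precisely the triples, the acceptability checks, and the $\tau$ computations that this invocation entails. Your extra care in verifying non-emptiness of the relevant $\GG_I$ via Claim~\ref{lemma:leadsto} is also consistent with the paper, which records the same fact just before stating Claim~\ref{claim:g1}.
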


\begin{claim}\label{claim:g1eqn}
\begin{enumerate}
\item\label{g1eqn-1} $\min\{x_1,x_{23}\}\leq \tilde\alpha^3$
(see Claim~\ref{tilde-alpha} for the definition of $\tilde\alpha$).
\item\label{g1eqn-2} $x_{12}+c x_{13}\leq p(c+1)$, where
$c=\frac1{2p}(1+\sqrt{1-4p^2})$.
\item\label{g1eqn-3} $x_1+x_{123}\leq 1$.
\item\label{g1eqn-4} $x_{23}+x_{123}\leq 1$.
\item\label{g1eqn-5} $x_{1}+x_{12}+x_{23} \leq 1$.
\item\label{g1eqn-6} 
$x_{23}\leq \tilde a_2$
(see Claim~\ref{tilde-at} for the definition of $\tilde a_t$).
\item\label{g1eqn-7} $x_1\leq \tilde a_3$.
\end{enumerate} 
\end{claim}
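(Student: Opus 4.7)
The plan is to prove each of the seven inequalities separately, combining the intersection properties recorded in Claim~\ref{claim:g1}, the shifting-order chain from Claim~\ref{poset}, and one of the measure bounds developed in Section~\ref{prelim}. Five of the seven are essentially direct. For parts~(\ref{g1eqn-3}) and~(\ref{g1eqn-4}), Claim~\ref{claim:g1}(\ref{g1-3}) yields that $\{\GG_1,\GG_{123}\}$ and $\{\GG_{23},\GG_{123}\}$ are both 2-cross intersecting, and Lemma~\ref{lemma:2c1i}(i) gives the two sum bounds at once. For part~(\ref{g1eqn-5}), Claim~\ref{claim:g1}(\ref{g1-4}) gives that $\GG_1,\GG_{12},\GG_{23}$ are 3-cross 2-intersecting, and Lemma~\ref{lemma:3w1i<=1} then gives $x_1+x_{12}+x_{23}\leq 1$. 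For parts~(\ref{g1eqn-6}) and~(\ref{g1eqn-7}), Claim~\ref{claim:g1}(\ref{g1-5}) and~(\ref{g1-1}) say that $\GG_{23}\subset 2^{[4,n]}$ is 2-wise 2-intersecting and $\GG_1\subset 2^{[4,n]}$ is 2-wise 3-intersecting, so Theorem~\ref{thm:AK} gives $x_{23}\leq\AK(n-3,2,p)$ and $x_1\leq\AK(n-3,3,p)$; since $\AK(m,t,p)$ is non-decreasing in $m$ and bounded above by $f_t(p)$, Claim~\ref{tilde-at} converts these to the $\tilde a_t(p)$ bounds on $[\tfrac25,\tfrac12]$.

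For part~(\ref{g1eqn-1}), Claim~\ref{claim:g1}(\ref{g1-1}) states that $\GG_1,\GG_1,\GG_{23}$ are 3-cross 3-intersecting, so Lemma~\ref{la2} with $t=3$ and $p_1=p_2=p_3=p$ gives $x_1^2\,x_{23}\leq\alpha(p)^9$. Since $\min\{x_1,x_{23}\}^3\leq x_1^2 x_{23}$ (the smaller of the two dominates $x_1^3$ or $x_{23}^3$), we conclude $\min\{x_1,x_{23}\}\leq\alpha(p)^3$, and Claim~\ref{tilde-alpha} promotes this to $\tilde\alpha(p)^3$.

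Part~(\ref{g1eqn-2}) is the technical core. The available constraints on the pair $(x_{12},x_{13})$ are the sum bound $x_{12}+x_{13}\leq 1$ (Lemma~\ref{lemma:2c1i}(i) applied to the 2-cross intersecting pair $\GG_{12},\GG_{13}$ from Claim~\ref{claim:g1}(\ref{g1-2})), the product bound $x_{12}x_{13}\leq p^2$ (Lemma~\ref{lemma:2c1i}(ii) applied to the same pair), and the order $x_{12}\geq x_{13}$ from Claim~\ref{poset}. The constant $c=\tfrac1{2p}(1+\sqrt{1-4p^2})$ is chosen precisely so that $cp$ and $p/c$ are the two roots of $X^2-X+p^2=0$; in particular $cp+p/c=1$ and $c^2p=c-p$, and $c\geq 1$ for $p\leq\tfrac12$. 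Setting $\phi:=(x_{12}-p)+c(x_{13}-p)$, the plan is to split into three cases. If $x_{12}\leq p$, then $x_{13}\leq x_{12}\leq p$, so both summands of $\phi$ are non-positive. If $p<x_{12}\leq cp$, substituting $x_{13}\leq p^2/x_{12}$ gives $\phi\leq(x_{12}-p)(x_{12}-cp)/x_{12}\leq 0$. If $x_{12}>cp$, then the sum constraint $x_{13}\leq 1-x_{12}$ yields $\phi\leq(1-c)x_{12}+c(1-p)-p$; this linear function of $x_{12}$ has non-positive slope (since $c\geq 1$) and vanishes at $x_{12}=cp$ thanks to the identity $c^2p=c-p$, so $\phi\leq 0$ for $x_{12}\geq cp$. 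In every case $\phi\leq 0$, giving $x_{12}+cx_{13}\leq p(c+1)$. The main obstacle is identifying the right combination of the three constraints (neither the sum nor the product bound alone suffices, and extremes such as $x_{12}=1$ must be ruled out using the sum bound); the identity $c^2p=c-p$ is what makes the three regimes patch together exactly at $x_{12}=cp$.
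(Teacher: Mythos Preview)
Your proof is correct and follows essentially the same approach as the paper for all seven items. For item~(\ref{g1eqn-2}) the paper argues geometrically: it identifies the two points $(x_{13},x_{12})=(p,p)$ and $(1-cp,cp)$ on the boundary of the feasible region and asserts that the region lies below the chord joining them, which is the line $x_{12}+cx_{13}=p(c+1)$. Your three-case analysis on $x_{12}$ is exactly a rigorous verification of that geometric claim, and in fact fills in the justification the paper leaves implicit.
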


\begin{proof}
Item \eqref{g1eqn-1}: 
By Lemma~\ref{la2} with \eqref{g1-1} of Claim~\ref{claim:g1}, we have
$x_1^2x_{23}\leq\alpha^9$. Then, using $\alpha\leq\tilde\alpha$ from
Claim~\ref{tilde-alpha}, we get 
$(\min\{x_1,x_{23}\})^3\leq x_1^2x_{23}\leq\alpha^9\leq\tilde\alpha^9$.

Item \eqref{g1eqn-2}:
By (i) of Lemma~\ref{lemma:2c1i} with \eqref{g1-2} of Claim~\ref{claim:g1}, 
we have $x_{13}+x_{12}\leq 1$. 
Moreover, by (ii) of Lemma~\ref{lemma:2c1i} with $x_{13}\leq x_{12}$
from Claim~\ref{poset}, 
we have $x_{13}^2\leq x_{13}x_{12}\leq p^2$ and $x_{13}\leq p$. 
By solving $x_{13}x_{12}=p^2$ and $x_{13}+x_{12}=1$ 
with $x_{13}\leq x_{12}$ we get 
$(x_{13},x_{12})=(\frac12(1-\sqrt{1-4p^2}),\frac12(1+\sqrt{1-4p^2}))
=(1-cp,cp)$.
Also, by solving $x_{13}x_{12}=p^2$ and $x_{13}=x_{12}$ we get 
$(x_{13},x_{12})=(p,p)$.
Thus $(x_{13},x_{12})$ exists only under the line connecting these two
points, that is, $x_{12}\leq c(p-x_{13})+p$. (See Figure~\ref{fig-claim20})
\begin{figure}[h]
\includegraphics[scale=1.2]{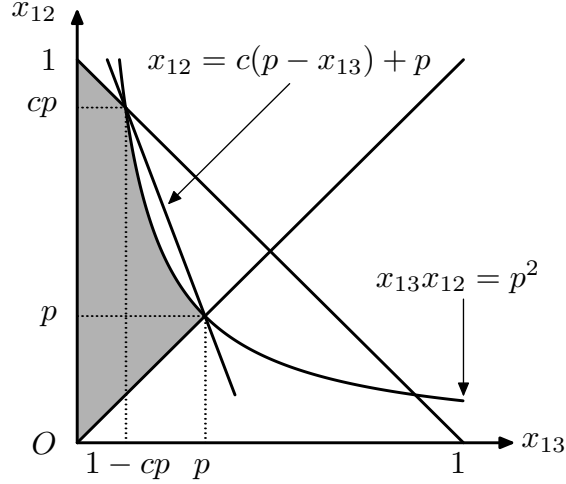}  
\caption{Item (2) of Claim~20: $(x_{13},x_{12})$ is included in the gray area}\label{fig-claim20}
\end{figure}

Items \eqref{g1eqn-3} and \eqref{g1eqn-4}: These follow from 
(i) of Lemma~\ref{lemma:2c1i} with \eqref{g1-3} of Claim~\ref{claim:g1}. 

Items \eqref{g1eqn-5}: This follows from 
Lemma~\ref{lemma:3w1i<=1} with \eqref{g1-4} of Claim~\ref{claim:g1}. 

Items \eqref{g1eqn-6}: This follows from Claim~\ref{tilde-at} 
with \eqref{g1-5} of Claim~\ref{claim:g1}. 

Items \eqref{g1eqn-7}: This follows from Claim~\ref{tilde-at} 
with \eqref{g1-1} of Claim~\ref{claim:g1}. 
\end{proof}

Recall from \eqref{mu(FF)} that 
$\mu_p(\FF)=pq^2x_1+p^2q(x_{12}+x_{13}+x_{23})+p^3x_{123}$. 

\subsubsection{Subcase $x_1\leq x_{23}$.}
We solve the following linear programming problem:
\begin{description}
\item[maximize] $pq^2x_1+p^2q(x_{12}+x_{13}+x_{23})+p^3x_{123}$,
\item[subject to] $x_1-x_{23}\leq 0$, \eqref{g1eqn-1}--\eqref{g1eqn-6}
in Claim~\ref{claim:g1eqn}, and $x_I\geq 0$ for all $I$.
\end{description}
The corresponding dual problem is
\begin{description}
\item[minimize] $\tilde\alpha^3 y_1+p(c+1) y_2+y_3+y_4+y_5+\tilde a_2 y_6$,
\item[subject to]
$y_0+y_1+y_3+y_5\geq pq^2$, 
$-y_0+y_4+y_5+y_6\geq p^2q$, 
$c y_2\geq p^2q$, 
$y_2+y_5\geq p^2q$, 
$y_3+y_4\geq p^3$, and $y_i\geq 0$ for all $i$. 
\end{description}
\begin{table}[h]
\caption{Subcase $x_1\leq x_{23}$}
\begin{tabular}{c|ccccc|c}
&$x_1$&$x_{23}$&$x_{13}$&$x_{23}$&$x_{123}$&\\
\hline
$y_0$&1&$-1$&&&&$0$\\
$y_1$&1&&&&&$\tilde\alpha^3$\\
$y_2$&&&$c$&1&&$p(c+1)$\\
$y_3$&1&&&&1&$1$\\
$y_4$&&1&&&1&$1$\\
$y_5$&1&1&&1&&$1$\\
$y_6$&&1&&&&$\tilde a_2$\\
\hline
&$pq^2$&$p^2q$&$p^2q$&$p^2q$&$p^3$
\end{tabular}
\end{table}
A feasible solution is given by
$y_0=y_6=0$,
$y_1=pq^2-p^2q(1-\frac2c)-p^3$,
$y_2=y_4=\frac{p^2q}c$,
$y_3=p^3-\frac{p^2q}c$,
$y_5=p^2q(1-\frac1c)$,
and the corresponding value of the objective function is
\begin{align}\label{value4.2.1}
p\left(p + p^2 - p^3 + \tilde\alpha^3(1 - 3p + p^2)\right) 
-\frac1c \,p^2 q(1-2\tilde\alpha^3-p).
\end{align}
By the Taylor expansion of $\frac1c$ at $p=\frac25$ it follows that
$\frac1c>d(p)$, where 
$d(p)=\frac{1375 p^2}{216}-\frac{325p}{108}+\frac{37}{54}$ for 
$\frac25\leq p\leq \frac12$. Since $1-2\tilde\alpha^3-p>0$ in this 
interval, the value \eqref{value4.2.1} satisfies
\begin{align*}
 &<
p\left(p + p^2 - p^3 + \tilde\alpha^3(1 - 3p + p^2)\right) 
-d(p)p^2 q(1-2\tilde\alpha^3-p) =:f(p)\\
&\approx  - 0.00633167 p + 0.490037 p^2 + 3.71975 p^3 - 8.76595 p^4 \\
&\qquad\quad +21.3084 p^5 - 61.7755 p^6 + 95.9036 p^7 - 52.7438 p^8.
\end{align*}
Let $g(p):=(4p^3q+p^4 -0.00194)-f(p)$, and let $g^{(i)}(p)$ denote the
$i$-th derivative. Let $\frac25\leq p\leq \frac12$.
We have $g^{(6)}(p)\approx 1063314.937p^2-483354.3468p+44478.39041$
and $g^{(6)}(p)>0$. Thus $g^{(4)}(p)$ is convex.
Since $g^{(4)}(\frac25)<-213<0$ and $g^{(4)}(\frac12)<-112<0$, we have
$g^{(4)}(p)<0$. Thus $g^{(3)}(p)$ is decreasing in $p$. Since 
$g^{(3)}(\frac25)<-7<0$ we have $g^{(3)}(p)<0$. 
Thus $g'(p)$ is concave. Since $g'(\frac25)>0.23>0$ and 
$g'(\frac12)>0.34>0$, we have $g'(p)>0$ and $g(p)$ is increasing in $p$.
Finally we have $g(\frac25)>3\times 10^{-6}>0$, and so $g(p)>0$. This means
that the value \eqref{value4.2.1} is less than $4p^3q+p^4 -0.00194$.

Then by the weak duality theorem we have
$\mu_p(\FF)<4p^3q+p^4-0.00194$.

\subsubsection{Subcase $x_{23}\leq x_1$.}
We solve the following linear programming problem:
\begin{description}
\item[maximize] $pq^2x_1+p^2q(x_{12}+x_{13}+x_{23})+p^3x_{123}$,
\item[subject to] $x_{23}-x_1\leq 0$,
\eqref{g1eqn-1}--\eqref{g1eqn-5} and \eqref{g1eqn-7}
in Claim~\ref{claim:g1eqn}, and $x_I\geq 0$ for all $I$.
\end{description}
The corresponding dual problem is
\begin{description}
\item[minimize] 
$\tilde\alpha^3 y_1+p(c+1) y_2+ y_3+y_4+y_5+\tilde a_3 y_7$,
\item[subject to]
$-y_0+y_3+y_5+y_7\geq pq^2$, 
$y_0+y_1+y_4+y_5\geq p^2q$, 
$c y_2\geq p^2q$, 
$y_2+y_5\geq p^2q$, 
$y_3+y_4\geq p^3$, and $y_i\geq 0$ for all $i$. 
\end{description}
\begin{table}[h]
\caption{Subcase $x_{23}\leq x_1$}
\begin{tabular}{c|ccccc|c}
&$x_1$&$x_{23}$&$x_{13}$&$x_{23}$&$x_{123}$&\\
\hline
$y_0$&$-1$&1&&&&$0$\\
$y_1$&&1&&&&$\tilde\alpha^3$\\
$y_2$&&&$c$&1&&$p(c+1)$\\
$y_3$&1&&&&1&$1$\\
$y_4$&&1&&&1&$1$\\
$y_5$&1&1&&1&&$1$\\
$y_7$&1&&&&&$\tilde a_3$\\
\hline
&$pq^2$&$p^2q$&$p^2q$&$p^2q$&$p^3$
\end{tabular}
\end{table}
A feasible solution is given by
$y_0=y_4=0$,
$y_1=y_2=p^2 q(1-\frac1c)$,
$y_3=p^3$,
$y_5=p^2 q(1-\frac1c)$,
$y_7=p q^2-p^2 q(1-\frac1c)-p^3$.
Noting that $c+\frac1c=p$, 
the corresponding value of the objective function is
\[
p^3\left(-\tilde\alpha^3+\tilde a_3-1\right)+p^2 \left(\tilde\alpha^3-3\tilde a_3+2\right)+\tilde a_3 p
   -\frac1c\,p^2q\left(\tilde\alpha^3-\tilde a_3+2 p+1\right).
\]
Then, using $\tilde\alpha^3-\tilde a_3+2 p+1>0$ and  $\frac1c>d(p)$ 
(see the previous subsection), the above value satisfies
\begin{align*}
&< -1.50324 p + 8.79901 p^2 - 3.86493 p^3 - 26.8212 p^4\\
&\qquad\quad  + 30.6062 p^5 + 12.8608 p^6 - 47.9518 p^7 + 26.3719 p^8=:f(p).
\end{align*}
Let $g(p):=(4p^3q+p^4 -0.00182)-f(p)$.
Let $J_1=\{p\in\R:\frac25\leq p\leq\frac9{20}\}$,
$J_2=\{p\in\R:\frac9{20}\leq p\leq\frac12\}$, and $J=J_1\cup J_2$.
We need to show that $g(p)>0$ for $p\in J$.

First we show that $g^{(4)}(p)>0$ for $p\in J_1$. 
We have $g^{(7)}(p)<0$ for $p\in J$. Thus $g^{(5)}(p)$ is concave.
Since $g^{(5)}(\frac25)>0$ and $g^{(5)}(\frac9{20})>0$ we have
$g^{(5)}(p)>0$ for $p\in J_1$. Thus $g^4(p)$ is increasing in $p\in J_1$.
Since $g^{(4)}(\frac9{20})<0$, we have $g^{(4)}(p)<0$ for $p\in J_1$.

Next we show that $g^{(4)}(p)>0$ for $p\in J_2$.
Since $g^{(7)}(p)<0$, $g^{(6)}(p)$ is decreasing in $p$.
Since $g^{(6)}(\frac9{20})<0$, $g^{(4)}(p)$ is concave for $p\in J_2$.
Let $h(p)=481.064p-381.36$ be the tangent to $g^{(4)}(p)$ at $p=\frac9{20}$.
Then we have $g^{(4)}(p)\leq h(p)\leq h(\frac12)$ for $p\in J_2$. 
Since $h(\frac12)<0$, we have $g^{(4)}(p)<0$ for $p\in J_2$.

Let $p\in J$. We have shown that $g^{(4)}(p)>0$.
Then $g^{(2)}(p)$ is concave. Since $g^{(2)}(\frac25)>0$
and $g^{(2)}(\frac12)>0$, we have $g^{(2)}(p)>0$. Thus $g'(p)$ is increasing
in $p$. Since $g'(\frac25)>0$, we have $g'(p)>0$ and $g(p)$ is increasing
in $p$. Since $g(\frac25)>0$, we have $g(p)>0$ as needed.

Thus it follows from the weak duality theorem that 
$\mu_p(\FF)<4p^3q+p^4-0.0018$.

\subsection{Case $\GG_2\neq\emptyset$, $\GG_3=\emptyset$}
Using Claim~\ref{M} we have the following.
\begin{claim}\label{claim:g2}
\begin{enumerate}
\item\label{g21} 
$\{\GG_1,\GG_1,\GG_2\}$ are $3$-cross $4$-intersecting, and so $\GG_1$ is
$2$-wise $4$-intersecting.
\item\label{g22} $\{\GG_2,\GG_{13},\GG_{13}\}$ are $3$-cross $2$-intersecting,
and so $\GG_{13}$ is $2$-wise $2$-intersecting.
\item\label{g23} 
$\{\GG_2,\GG_{13},\GG_{123}\}$ are $3$-cross intersecting, and so
$\{\GG_{13},\GG_{123}\}$ are $2$-cross intersecting.
\item\label{g24} $\{\GG_{2},\GG_{12},\GG_{13}\}$ are $3$-cross $2$-intersecting.
\item\label{g25} $\{\GG_{1},\GG_{2},\GG_{123}\}$ are $3$-cross $2$-intersecting.
\end{enumerate}
\end{claim}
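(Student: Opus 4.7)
The plan is to derive each of the five items as a direct application of Claim~\ref{M} to an appropriate triple $(I_1,I_2,I_3) \subset 2^{[3]}$ with $I_1 \cap I_2 \cap I_3 = \emptyset$, and then obtain the corresponding $2$-cross (or $2$-wise) intersection statements by fixing a set in one of the three families. The only genuine verification needed, beyond identifying the triple and computing $\tau = 7 - (|I_1| + |I_2| + |I_3|)$, is that all three families on the list are non-empty; this is the prerequisite for invoking Claim~\ref{M}.

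I would first dispatch the non-emptiness question once and for all. By hypothesis $\GG_2 \neq \emptyset$, and Claim~\ref{lemma:leadsto} together with the shift relations $\{2\} \leadsto \{1\}$, $\{2\} \leadsto \{1,2\}$, $\{2\} \leadsto \{1,3\}$, and $\{2\} \leadsto \{1,2,3\}$ yields the chain $\GG_2 \subset \GG_1 \subset \GG_{12}, \GG_{13} \subset \GG_{123}$. Hence every family appearing in items (\ref{g21})--(\ref{g25}) is non-empty.

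Now I would apply Claim~\ref{M} to each triple in turn. For (\ref{g21}) take $(I_1,I_2,I_3)=(\{1\},\{1\},\{2\})$: the columns of $M$ all contain a zero, $s=3$, $\tau=4$, so $\GG_1,\GG_1,\GG_2$ are $3$-cross $4$-intersecting. For (\ref{g22}) take $(\{2\},\{1,3\},\{1,3\})$: $s=5$, $\tau=2$. For (\ref{g23}) take $(\{2\},\{1,3\},\{1,2,3\})$: $s=6$, $\tau=1$. For (\ref{g24}) take $(\{2\},\{1,2\},\{1,3\})$: $s=5$, $\tau=2$. For (\ref{g25}) take $(\{1\},\{2\},\{1,2,3\})$: $s=5$, $\tau=2$. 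In each case acceptability of $M$ is immediate (the unique element of $\bigcap I_i$ would have to lie in the respective intersection, and inspection shows it does not).

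The $2$-wise conclusions in (\ref{g21})--(\ref{g23}) follow at once: for (\ref{g21}), fixing any $G_2 \in \GG_2$ and taking the intersection with any $G,G' \in \GG_1$ forces $|G \cap G'| \geq |G \cap G' \cap G_2| \geq 4$; the arguments for (\ref{g22}) (with $\GG_{13}$ in the role of $\GG_1$ and any fixed $G_2 \in \GG_2$) and for (\ref{g23}) (with any fixed $G_2 \in \GG_2$ playing the role of the third family) are identical. The only step requiring any care is matching up $\tau$ with the column pattern of $M$, but because every column contains at least one zero in each of the five choices, the bookkeeping is mechanical; I do not foresee any real obstacle.
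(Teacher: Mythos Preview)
Your proposal is correct and follows essentially the same approach as the paper, which simply invokes Claim~\ref{M} for each triple. You add the explicit non-emptiness check and spell out the derivation of the $2$-wise conclusions; both are routine (and the latter is already contained in the statement of Claim~\ref{M}), so this is just a slightly more detailed version of the paper's one-line proof.
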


\begin{claim}\label{claim:g2eqn}
\begin{enumerate}
\item\label{g2eqn-1} $x_1\leq \tilde a_4$. 
\item\label{g2eqn-2} $x_{13}\leq \tilde a_2$.
\item\label{g2eqn-3} $x_2\leq \tilde \alpha^4$. 
\item\label{g2eqn-4} $x_{13}+x_{123}\leq 1$.
\item\label{g2eqn-5} $x_{1}+x_{12}+x_{23}\leq 1$.
\item\label{g2eqn-6} $x_{2}+x_{12}+x_{13}\leq 1$.
\item\label{g2eqn-7} $x_{1}+x_{2}+x_{123}\leq 1$.
\end{enumerate}
\end{claim}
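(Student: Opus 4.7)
The plan is to verify the seven inequalities one by one, each by feeding one of the cross-intersecting statements of Claim \ref{claim:g2} into the appropriate upper-bound tool from Section \ref{prelim}. First, note that since $\GG_2\neq\emptyset$, Claim \ref{poset} implies $\GG_1,\GG_2,\GG_{12},\GG_{13},\GG_{23},\GG_{123}$ are all non-empty, so the non-emptiness hypotheses of Lemma \ref{lemma:3w1i<=1} and Theorem \ref{thm4} are automatically satisfied throughout.

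For \eqref{g2eqn-1}, by \eqref{g21} of Claim \ref{claim:g2}, $\GG_1$ is $2$-wise $4$-intersecting, so Claim \ref{tilde-at} with $t=4$ yields $x_1=\mu_p(\GG_1)\leq \AK(n,4,p)\leq \tilde a_4(p)$. Inequality \eqref{g2eqn-2} is analogous: by \eqref{g22}, $\GG_{13}$ is $2$-wise $2$-intersecting, so $x_{13}\leq \tilde a_2(p)$ by Claim \ref{tilde-at} with $t=2$. For \eqref{g2eqn-3}, apply Lemma \ref{la2} to the $3$-cross $4$-intersecting triple $\GG_1,\GG_1,\GG_2$ (from \eqref{g21}); this gives $x_1^2 x_2\leq \alpha(p)^{12}$. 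Combining with $x_2\leq x_1$ from Claim \ref{poset} we get $x_2^3\leq x_1^2 x_2\leq\alpha^{12}$, so $x_2\leq\alpha^4\leq \tilde\alpha^4$ by Claim \ref{tilde-alpha}.

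Inequality \eqref{g2eqn-4} follows from (i) of Lemma \ref{lemma:2c1i} applied to the $2$-cross intersecting pair $\GG_{13},\GG_{123}$ given by \eqref{g23} of Claim \ref{claim:g2}. Inequalities \eqref{g2eqn-6} and \eqref{g2eqn-7} follow directly from Lemma \ref{lemma:3w1i<=1} applied to the $3$-cross $2$-intersecting triples in \eqref{g24} and \eqref{g25}, respectively. Only \eqref{g2eqn-5} requires a fresh application of Claim \ref{M}: take $I_1=\{1\}$, $I_2=\{1,2\}$, $I_3=\{2,3\}$. Every column of the corresponding matrix $M$ contains a $0$, so $M$ is acceptable with $s=1+2+2=5$ and $\tau=7-5=2$, hence $\GG_1,\GG_{12},\GG_{23}$ are $3$-cross $2$-intersecting. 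Lemma \ref{lemma:3w1i<=1} then gives $x_1+x_{12}+x_{23}\leq 1$.

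There is no real obstacle here: the work is purely bookkeeping, matching each inequality to the correct hypothesis in Claim \ref{claim:g2} and the correct general bound. The only mild subtlety is \eqref{g2eqn-3}, where one must first combine Lemma \ref{la2} with the poset bound $x_2\leq x_1$ before invoking Claim \ref{tilde-alpha}, and \eqref{g2eqn-5}, which is not already listed in Claim \ref{claim:g2} and so requires verifying acceptability via Claim \ref{M} directly.
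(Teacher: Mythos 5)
Your proof is correct and follows essentially the same route as the paper: each inequality is matched to the corresponding cross-intersecting statement and bounded via Claim~\ref{tilde-at}, Claim~\ref{tilde-alpha} with Lemma~\ref{la2}, Lemma~\ref{lemma:2c1i}, or Lemma~\ref{lemma:3w1i<=1}. The only cosmetic difference is item~\eqref{g2eqn-5}, which the paper obtains by noting that Claim~\ref{claim:g1eqn} remains valid in this case, while you re-derive it directly from Claim~\ref{M} with $I_1=\{1\}$, $I_2=\{1,2\}$, $I_3=\{2,3\}$ --- the same underlying computation.
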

\begin{proof}
Item \eqref{g2eqn-3}: 
By Lemma~\ref{la2} with \eqref{g21} of Claim~\ref{claim:g2}, we have
$x_1^2x_2\leq\alpha^{12}$. Then, using $x_2\leq x_1$ from Claim~\ref{poset}
and Claim~\ref{tilde-alpha}, we get 
$x_2^3\leq x_1^2x_2\leq\alpha^{12}\leq\tilde\alpha^{12}$.

Item \eqref{g2eqn-5} is from Claim~\ref{claim:g1eqn}. Indeed all 
inequalities in Claim~\ref{claim:g1eqn} are still valid in this case. 

The other items follow from Claim~\ref{claim:g2}, Claim~\ref{tilde-at},
Lemma~\ref{lemma:2c1i}, and Lemma~\ref{lemma:3w1i<=1}.
\end{proof}

We solve the following linear programming problem:
\begin{description}
\item[maximize] $pq^2(x_1+x_2)+p^2q(x_{12}+x_{13}+x_{23})+p^3x_{123}$,
\item[subject to] \eqref{g2eqn-1}--\eqref{g2eqn-7}
in Claim~\ref{claim:g2eqn}, and $x_I\geq 0$ for all $I$.
\end{description}
The corresponding dual problem is
\begin{description}
\item[minimize] 
$\tilde a_4y_1+\tilde a_2y_2+\tilde\alpha^4 y_3+ y_4+ y_5+y_6+y_7$,
\item[subject to]
$y_3+y_6+y_7\geq pq^2$, 
$y_1+y_5+y_7\geq pq^2$, 
$y_5\geq p^2q$, 
$y_2+y_4+y_6\geq p^2q$, 
$y_5+y_6\geq p^2q$, 
$y_4+y_7\geq p^3$, and $y_i\geq 0$ for all $i$. 
\end{description}
\begin{table}[h]
\caption{Case $\GG_2\neq\emptyset$}
\begin{tabular}{c|cccccc|c}
&$x_2$&$x_1$&$x_{23}$&$x_{13}$&$x_{23}$&$x_{123}$&\\
\hline
$y_1$&&1&&&&&$\tilde a_4$\\
$y_2$&&&&1&&&$\tilde a_2$\\
$y_3$&1&&&&&&$\tilde\alpha^4$\\
$y_4$&&&&1&&1&$1$\\
$y_5$&&1&1&&1&&$1$\\
$y_6$&1&&&1&1&&$1$\\
$y_7$&1&1&&&&1&$1$\\
\hline
&$pq^2$&$pq^2$&$p^2q$&$p^2q$&$p^2q$&$p^3$
\end{tabular}
\end{table}
A feasible solution is given by
$y_1=pq(1-2p)$, $y_2=p^2(1-2p)$, 
$y_3=pq^2$, $y_4=p^3$,
$y_5=p^2 q$, $y_6=y_7=0$.
Then the corresponding value of the objective function is
\begin{align*}
& \tilde a_4pq(1-2p) +  \tilde a_2p^2(1-2p) +\tilde\alpha^4 pq^2+p^2\\
&\qquad\approx -1.70723 p + 9.39501 p^2 - 10.639 p^3 - 1.74811 p^4 \\
&\qquad\qquad + 13.3146 p^5 -  16.3729 p^6 + 6.6536 p^7\\
&\qquad< 4p^3q+p^4-0.00436,
\end{align*}
and so $\mu_p(\FF)<4p^3q+p^4-0.004$.

\subsection{Case $\GG_3\neq\emptyset$, $\GG_\emptyset=\emptyset$}
Using Claim~\ref{M} we have the following.
\begin{claim}\label{claim:g3}
\begin{enumerate}
\item\label{g31} 
$\{\GG_3,\GG_{12},\GG_{12}\}$ are $3$-cross $2$-intersecting, and so
$\GG_{12}$ is $2$-wise $2$-intersecting.
\item\label{g32} 
$\{\GG_3,\GG_{12},\GG_{123}\}$ are $3$-cross intersecting, and so
$\{\GG_{12},\GG_{123}\}$ are $2$-cross intersecting.
\end{enumerate}
\end{claim}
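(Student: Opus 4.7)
The plan is to derive Claim~\ref{claim:g3} as a direct application of the general acceptability framework from Claim~\ref{M}, mirroring the structure used to prove the analogous Claims~\ref{claim:g1} and \ref{claim:g2} in the previous subcases. The first step is to verify that in the present subcase (where $\GG_3 \neq \emptyset$) all of the families $\GG_I$ with $I \neq \emptyset$ are non-empty. This follows from Claim~\ref{lemma:leadsto}: since $\{3\} \leadsto \{2\}, \{1\}, \{1,3\}, \{1,2\}, \{2,3\}, \{1,2,3\}$ in the shifting order, the chain $\GG_3 \subset \GG_J$ for each such $J$ gives the non-emptiness needed to apply Claim~\ref{M}.

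For item~\eqref{g31}, I would take $I_1 = \{3\}$, $I_2 = I_3 = \{1,2\}$ and write down the matrix
\[
M = \begin{pmatrix} 0 & 0 & 1 \\ 1 & 1 & 0 \\ 1 & 1 & 0 \end{pmatrix}.
\]
Every column contains a $0$ (so $M$ is acceptable), the entry sum is $s=5$, hence $\tau = 7-s = 2$. Claim~\ref{M} then gives that $\GG_3, \GG_{12}, \GG_{12}$ are $3$-cross $2$-intersecting. Picking any $G_3 \in \GG_3$ (which exists by hypothesis), for every pair $G, G' \in \GG_{12}$ we obtain $|G \cap G'| \geq |G \cap G' \cap G_3| \geq 2$, so $\GG_{12}$ is $2$-wise $2$-intersecting.

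For item~\eqref{g32}, I would take $I_1 = \{3\}$, $I_2 = \{1,2\}$, $I_3 = \{1,2,3\}$ and write down
\[
M = \begin{pmatrix} 0 & 0 & 1 \\ 1 & 1 & 0 \\ 1 & 1 & 1 \end{pmatrix}.
\]
Again every column contains a $0$, the entry sum is $s = 6$, and $\tau = 7-s = 1$. By Claim~\ref{M} the triple $\GG_3, \GG_{12}, \GG_{123}$ is $3$-cross intersecting, and fixing any $G_3 \in \GG_3$ yields $|G \cap G'| \geq |G \cap G' \cap G_3| \geq 1$ for every $G \in \GG_{12}$ and $G' \in \GG_{123}$, establishing $2$-cross intersection of $\GG_{12}$ and $\GG_{123}$.

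There is no real obstacle in this claim; the content is purely combinatorial bookkeeping. The only point that requires a moment of care is the verification that the full set of families $\GG_{I}$ appearing as rows of $M$ is simultaneously non-empty in the present subcase, so that the hypothesis of Claim~\ref{M} is actually satisfied. This is handled uniformly by the remark that $\GG_3 \neq \emptyset$ forces $\GG_J \neq \emptyset$ for all $J$ with $\{3\} \leadsto J$, which covers every index appearing here.
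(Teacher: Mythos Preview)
Your proposal is correct and follows exactly the approach the paper takes: the paper simply writes ``Using Claim~\ref{M} we have the following'' before stating Claim~\ref{claim:g3}, and your argument is the fully unpacked version of that remark (forming the acceptable matrices, computing $\tau$, and invoking $\GG_3\neq\emptyset$ to pass to the $2$-wise and $2$-cross conclusions). There is nothing to add.
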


\begin{claim}\label{claim:g3eqn}
\begin{enumerate}
\item\label{g3eqn-1} $x_1\leq \tilde a_4$. 
\item\label{g3eqn-2} $x_{12}\leq \tilde a_2$.
\item\label{g3eqn-3} $x_2\leq \tilde \alpha^4$. 
\item\label{g3eqn-4} $x_{12}+x_{123}\leq 1$.
\item\label{g3eqn-5} $x_{1}+x_{12}+x_{23}\leq 1$.
\item\label{g3eqn-6} $x_{2}+x_{12}+x_{13}\leq 1$.
\item\label{g3eqn-7} $x_{1}+x_{2}+x_{123}\leq 1$.
\item\label{g3eqn-8} $x_3-x_2\leq 0$.
\item\label{g3eqn-9} $x_{23}-x_{13}\leq 0$.
\item\label{g3eqn-10} $x_{13}-x_{12}\leq 0$.
\end{enumerate}
\end{claim}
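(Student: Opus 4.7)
My plan is to split the ten inequalities of Claim~\ref{claim:g3eqn} into three blocks, according to which earlier tool handles them. The guiding observation is that $\GG_3\neq\emptyset$ forces $\GG_I\neq\emptyset$ for every non-empty $I\subset[3]$ via the shift-ordering of Claim~\ref{poset}, so every previously used 3-cross intersection relation whose required families are non-empty carries over to the present case.

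First I would dispense with items (8), (9), (10) immediately: these are exactly the inequalities $x_3\leq x_2$, $x_{23}\leq x_{13}$, $x_{13}\leq x_{12}$ recorded in Claim~\ref{poset}. Next, for items (1), (3), (5), (6), (7) I would observe that items \eqref{g21}, \eqref{g24}, \eqref{g25} of Claim~\ref{claim:g2} and item \eqref{g1-4} of Claim~\ref{claim:g1} remain valid in the current case, because their content depends only on the non-emptiness of the families involved, which we have just verified. Consequently the proofs of items \eqref{g2eqn-1}, \eqref{g2eqn-3}, \eqref{g2eqn-5}, \eqref{g2eqn-6}, \eqref{g2eqn-7} of Claim~\ref{claim:g2eqn}, which combine Lemma~\ref{la2} with Claim~\ref{tilde-alpha}, Claim~\ref{tilde-at}, Lemma~\ref{lemma:3w1i<=1}, and $x_2\leq x_1$ from Claim~\ref{poset}, transfer verbatim.

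Only items (2) and (4) genuinely use the new hypothesis $\GG_3\neq\emptyset$. For these I would appeal to Claim~\ref{claim:g3}: part \eqref{g31} states that $\GG_{12}$ is $2$-wise $2$-intersecting, so Claim~\ref{tilde-at} applied with $t=2$ yields $x_{12}\leq\tilde a_2$, while part \eqref{g32} states that $\GG_{12}$ and $\GG_{123}$ are $2$-cross intersecting, so Lemma~\ref{lemma:2c1i}(i) yields $x_{12}+x_{123}\leq 1$. I do not anticipate any real obstacle in the proof; the only point that requires modest care is confirming that the matrices $M(\{3\},\{1,2\},\{1,2\})$ and $M(\{3\},\{1,2\},\{1,2,3\})$ used to establish Claim~\ref{claim:g3} are acceptable with the right values of $\tau$, which is a direct column-by-column check giving $\tau=2$ and $\tau=1$ respectively.
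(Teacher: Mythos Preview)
Your proposal is correct and follows essentially the same approach as the paper: the paper likewise attributes items (1), (3), (6), (7) to Claim~\ref{claim:g2}, item (5) to Claim~\ref{claim:g1}, items (2) and (4) to Claim~\ref{claim:g3} combined with Claim~\ref{tilde-at} and Lemma~\ref{lemma:2c1i}, and the remaining items to Claim~\ref{poset}. Your added remark that $\GG_3\neq\emptyset$ forces every $\GG_I$ with $\emptyset\neq I\subset[3]$ to be non-empty (so the hypotheses of Claim~\ref{M} are met) makes explicit what the paper leaves implicit.
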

\begin{proof}
Items \eqref{g3eqn-1}, \eqref{g3eqn-3}, \eqref{g3eqn-6}, and 
\eqref{g3eqn-7} are from Claim~\ref{claim:g2}.
Items \eqref{g3eqn-2} and \eqref{g3eqn-4} follow from Claim~\ref{claim:g3},
Claim~\ref{tilde-at}, and Lemma~\ref{lemma:2c1i}.
Item \eqref{g3eqn-5} is from Claim~\ref{claim:g1}.
The other items are from Claim~\ref{poset}.
\end{proof}

We solve the following linear programming problem:
\begin{description}
\item[maximize] $pq^2(x_1+x_2+x_3)+p^2q(x_{12}+x_{13}+x_{23})+p^3x_{123}$,
\item[subject to] \eqref{g3eqn-1}--\eqref{g3eqn-10}
in Claim~\ref{claim:g3eqn}, and $x_I\geq 0$ for all $I$.
\end{description}
The corresponding dual problem is
\begin{description}
\item[minimize] 
$\tilde a_4y_1+\tilde a_2y_2+\tilde\alpha^4 y_3+ y_4+ y_5+y_6+y_7$,
\item[subject to]
$y_8\geq pq^2$,
$y_3+y_6+y_7-y_8\geq pq^2$, 
$y_1+y_5+y_7\geq pq^2$, 
$y_5+y_9\geq p^2q$, 
$y_6-y_9+y_{10}\geq p^2q$, 
$y_2+y_4+y_5+y_6-y_{10}\geq p^2q$, 
$y_4+y_7\geq p^3$, and $y_i\geq 0$ for all $i$. 
\end{description}
\begin{table}[h]
\caption{Case $\GG_3\neq\emptyset$}
\begin{tabular}{c|ccccccc|c}
&$x_3$&$x_2$&$x_1$&$x_{23}$&$x_{13}$&$x_{23}$&$x_{123}$&\\
\hline
$y_1$&&&1&&&&&$\tilde a_4$\\
$y_2$&&&&&&1&&$\tilde a_2$\\
$y_3$&&1&&&&&&$\tilde\alpha^4$\\
$y_4$&&&&&&1&1&$1$\\
$y_5$&&&1&1&&1&&$1$\\
$y_6$&&1&&&1&1&&$1$\\
$y_7$&&1&1&&&&1&$1$\\
$y_8$&1&$-1$&&&&&&$0$\\
$y_9$&&&&$1$&$-1$&&&$0$\\
$y_{10}$&&&&&$1$&$-1$&&$0$\\
\hline
&$pq^2$&$pq^2$&$pq^2$&$p^2q$&$p^2q$&$p^2q$&$p^3$
\end{tabular}
\end{table}
We distinguish the following two subcases.

\subsubsection{Subcase $\frac25\leq p\leq 0.453264$}
A feasible solution is given by
$y_1=p q^2$,
$y_2=p^2(3-4p)$,
$y_3=2pq^2$,
$y_4=p^3$,
$y_5=y_6=y_7=0$,
$y_8=pq^2$,
$y_9=p^2q$,
$y_{10}=2p^2q$,
and the corresponding value of the objective function is
\begin{align*}
&
p\left(\tilde a_4q^2+\tilde a_2(3-4p)p+2\tilde\alpha^4q^2+p^2\right)\\
&\qquad\approx 
-1.70606 p + 4.43558 p^2 + 5.72241 p^3 - 16.7467 p^4\\
&\qquad\quad + 26.6293 p^5 - 32.7457 p^6 + 13.3072 p^7\\
&\qquad<4p^3q+p^4-0.00404377.
\end{align*}
Thus $\mu_p(\FF)<4p^3q+p^4-0.004$.

\subsubsection{Subcase $0.453264\leq p\leq \frac 12$}
A feasible solution is given by
$y_1=y_6=y_9=0$,
$y_2=p(1-2p)$,
$y_3=pq$,
$y_4=p(3p-p^2-1)$,
$y_5=y_{10}=p^2q$,
$y_7=pq(1-2p)$,
$y_8=pq^2$,
and the corresponding value of the objective function is
\begin{align*}
&
p\left(\tilde a_2(1-2p)+\tilde\alpha^4q+p\right)\\
&\qquad\approx 
-1.10283 p + 6.37415 p^2 - 5.84563 p^3 
- 3.59536 p^4 + 9.71927 p^5 - 6.6536 p^6\\
&\qquad
<4p^3q+p^4-0.00404377.
\end{align*}
Thus $\mu_p(\FF)<4p^3q+p^4-0.004$.

\subsection{Case $\GG_\emptyset\neq\emptyset$}
Using Claim~\ref{M} we have the following.
\begin{claim}\label{claim:gempty}
\begin{enumerate}
\item\label{g01} $\{\GG_\emptyset,\GG_\emptyset,\GG_\emptyset\}$ are
$3$-cross $7$-intersecting, that is,
$\GG_\emptyset$ is $3$-wise $7$-intersecting.
\item\label{g02} 
$\{\GG_1,\GG_1,\GG_\emptyset\}$ are $3$-cross $5$-intersecting,
and so $\GG_1$ is $2$-wise $5$-intersecting.
\item\label{g03} 
$\{\GG_\emptyset,\GG_{123},\GG_{123}\}$ are $3$-cross intersecting, and so
$\{\GG_{123},\GG_{123}\}$ are $2$-cross intersecting.
\end{enumerate}
\end{claim}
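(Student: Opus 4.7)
The plan is to obtain all three items as direct applications of Claim~\ref{M} with appropriately chosen $I_1,I_2,I_3 \subset [3]$, followed by a short projection argument to extract the 2-wise consequences. Before applying Claim~\ref{M} I need every relevant $\GG_{I}$ to be non-empty: the case hypothesis gives $\GG_\emptyset \neq \emptyset$, and Claim~\ref{poset} propagates this up the chain so that all of $\GG_1,\GG_{12},\GG_{13},\GG_{23},\GG_{123}$ are non-empty as well.

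For item (1) I take $I_1=I_2=I_3=\emptyset$. The matrix $M(\emptyset,\emptyset,\emptyset)$ is the zero matrix, so it is acceptable (every column contains a $0$ and $I_1 \cap I_2 \cap I_3 = \emptyset$), with $s=0$ and $\tau = 7-0 = 7$. Claim~\ref{M} then gives 3-cross 7-intersection, which is exactly the assertion that $\GG_\emptyset$ is 3-wise 7-intersecting.

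For item (2) I take $I_1=I_2=\{1\}$ and $I_3=\emptyset$. Then $I_1 \cap I_2 \cap I_3 = \emptyset$, so $M$ is acceptable, and $s=1+1+0=2$, giving $\tau = 5$. Claim~\ref{M} yields that $\GG_1,\GG_1,\GG_\emptyset$ are 3-cross 5-intersecting. To conclude that $\GG_1$ itself is 2-wise 5-intersecting, pick any fixed $G_0 \in \GG_\emptyset$; for arbitrary $G,G' \in \GG_1$ the 3-cross property gives $|G \cap G' \cap G_0| \geq 5$, hence $|G \cap G'| \geq 5$.

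For item (3) I take $I_1=\emptyset$ and $I_2=I_3=\{1,2,3\}$. Since $I_1 \cap I_2 \cap I_3 = \emptyset$, $M$ is acceptable, and $s = 0+3+3 = 6$, so $\tau = 1$. Claim~\ref{M} produces the 3-cross intersection of $\GG_\emptyset,\GG_{123},\GG_{123}$, and the 2-cross intersection of $\GG_{123}$ with itself follows by the same projection trick: fix $G_0 \in \GG_\emptyset$ and note $|G \cap G'| \geq |G \cap G' \cap G_0| \geq 1$ for all $G,G' \in \GG_{123}$. There is essentially no obstacle here; this claim is pure bookkeeping and its content is entirely absorbed in the combinatorial lemma Claim~\ref{M}, which was established earlier by the explicit swap-and-shift argument.
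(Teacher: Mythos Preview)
Your proof is correct and follows the same approach as the paper, which simply invokes Claim~\ref{M}. One small remark: the projection argument you give for the 2-wise consequences is valid but unnecessary, since Claim~\ref{M} already asserts that any two of the three families are $2$-cross $\tau$-intersecting.
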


\begin{claim}\label{claim:g0eqn}
\begin{enumerate}
\item\label{g0eqn-1} $x_\emptyset\leq \tilde\alpha^7$.
\item\label{g0eqn-2} $x_2\leq \tilde\alpha^4$.
\item\label{g0eqn-3}\label{g0x1} $x_1\leq \tilde a_5$.
\item\label{g0eqn-4} $x_{123}\leq p$.
\item\label{g0eqn-5} $x_{1}+x_{12}+x_{23}\leq 1$.
\item\label{g0eqn-6} $x_3-x_2\leq 0$.
\item\label{g0eqn-7} $x_{23}-x_{13}\leq 0$.
\item\label{g0eqn-8} $x_{13}-x_{12}\leq 0$.
\item\label{g0eqn-9} $x_{12}-x_{123}\leq 0$.
\end{enumerate}
\end{claim}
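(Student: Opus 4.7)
The plan is to derive each of the nine inequalities by combining the intersection structure of Claim~\ref{claim:gempty} (supplemented by further applications of Claim~\ref{M}) with the general upper bound tools collected in Section~\ref{prelim}. The four pointwise bounds (1)--(4) will come from either the random-walk bound (Lemma~\ref{la2}) or the Ahlswede--Khachatrian bound in piecewise linear form (Claim~\ref{tilde-at}), depending on the arity of the intersection condition; inequality (5) will follow from Lemma~\ref{lemma:3w1i<=1}; and the chain (6)--(9) will be an immediate restatement of the shifting poset in Claim~\ref{poset}.

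Concretely, for item (1), Claim~\ref{claim:gempty}(\ref{g01}) says $\GG_\emptyset$ is $3$-wise $7$-intersecting, so applying Lemma~\ref{la2} with $\FF_1=\FF_2=\FF_3=\GG_\emptyset$ and $t=7$ yields $x_\emptyset^3\le\alpha(p)^{21}$, and Claim~\ref{tilde-alpha} then gives $x_\emptyset\le\tilde\alpha(p)^7$. For item (2), the acceptable matrix with rows $\{1\},\{1\},\{2\}$ (sum $s=3$, so $\tau=4$) shows via Claim~\ref{M} that $\GG_1,\GG_1,\GG_2$ are $3$-cross $4$-intersecting, and Lemma~\ref{la2} combined with $x_2\le x_1$ from Claim~\ref{poset} produces $x_2^3\le x_1^2x_2\le\alpha(p)^{12}\le\tilde\alpha(p)^{12}$. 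For item (3), Claim~\ref{claim:gempty}(\ref{g02}) gives that $\GG_1$ is $2$-wise $5$-intersecting, so $x_1\le\AK(n,5,p)\le\tilde a_5(p)$ via Claim~\ref{tilde-at}. For item (4), Claim~\ref{claim:gempty}(\ref{g03}) makes $\GG_{123}$ self $2$-cross intersecting, so part (ii) of Lemma~\ref{lemma:2c1i} gives $x_{123}^2\le p^2$.

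For item (5), I would verify via Claim~\ref{M} that $\GG_1,\GG_{12},\GG_{23}$ are $3$-cross $2$-intersecting---the rows $\{1\},\{1,2\},\{2,3\}$ form an acceptable matrix with sum $s=5$, giving $\tau=2$---so Lemma~\ref{lemma:3w1i<=1} yields $x_1+x_{12}+x_{23}\le 1$. Finally, items (6)--(9) are immediate from the inequality chains $x_3\le x_2$, $x_{23}\le x_{13}\le x_{12}\le x_{123}$ recorded in Claim~\ref{poset}.

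The main obstacle is bookkeeping rather than depth: for each inequality one must correctly identify the triple of subfamilies to which Claim~\ref{M} applies, check acceptability of the associated $3\times 3$ matrix, and compute the right value of $\tau$ before invoking the appropriate intersection lemma. Once the structural facts in Claim~\ref{claim:gempty} and the ordering in Claim~\ref{poset} are in hand, every single inequality is a short one-step consequence of a preliminary result, so no part of the argument is technically demanding.
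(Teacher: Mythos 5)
Your proposal is correct and follows essentially the same route as the paper: items (1), (3), (4) via Claim~\ref{claim:gempty} together with Lemma~\ref{la2}, Claim~\ref{tilde-alpha}, Claim~\ref{tilde-at}, and Lemma~\ref{lemma:2c1i}(ii), item (5) via Lemma~\ref{lemma:3w1i<=1}, and items (6)--(9) directly from Claim~\ref{poset}. Your direct re-derivations of (2) and (5) through Claim~\ref{M} (with $\tau=4$ and $\tau=2$, respectively) simply inline what the paper handles by citing Claims~\ref{claim:g2eqn} and~\ref{claim:g1eqn}, and all the non-emptiness hypotheses hold here since $\GG_\emptyset\neq\emptyset$ forces every $\GG_I$ to be non-empty.
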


\begin{proof}
Items \eqref{g0eqn-1}, \eqref{g0eqn-3}, and \eqref{g0eqn-4} follow from
Claim~\ref{claim:gempty}, Lemma~\ref{la2}, Claim~\ref{tilde-alpha},
Claim~\ref{tilde-at}, and Lemma~\ref{lemma:2c1i}.
Items \eqref{g0eqn-2} and \eqref{g0eqn-5} are from Claim~\ref{claim:g2eqn}
and Claim~\ref{claim:g1eqn}, respectively.
The other items are from Claim~\ref{poset}.
\end{proof}

We solve the following linear programming problem:
\begin{description}
\item[maximize] 
$q^3 x_\emptyset+pq^2(x_1+x_2+x_3)+p^2q(x_{12}+x_{13}+x_{23})+p^3x_{123}$,
\item[subject to] \eqref{g0eqn-1}--\eqref{g0eqn-9}
in Claim~\ref{claim:g0eqn}, and $x_I\geq 0$ for all $I$.
\end{description}
The corresponding dual problem is
\begin{description}
\item[minimize] 
$\tilde\alpha^7 y_1+\tilde\alpha^4 y_2+\tilde a_5 y_3+ p y_4+ y_5$,
\item[subject to]
$y_1\geq q^3$,
$y_6\geq p q^2$,
$y_2-y_6\geq p q^2$,
$y_3+y_5\geq p q^2$,
$y_5+y_7\geq p^2 q$,
$-y_7+y_8\geq p^2 q$,
$y_5-y_8+y_9\geq p^2 q$,
$y_4-y_9\geq p^3$,
and $y_i\geq 0$ for all $i$. 
\end{description}
\begin{table}[h]
\caption{Case $\GG_\emptyset\neq\emptyset$}
\begin{tabular}{c|cccccccc|c}
&$x_\emptyset$&$x_3$&$x_2$&$x_1$&$x_{23}$&$x_{13}$&$x_{23}$&$x_{123}$&\\
\hline
$y_1$&1&&&&&&&&$\tilde\alpha^7$\\
$y_2$&&&1&&&&&&$\tilde\alpha^4$\\
$y_3$&&&&1&&&&&$\tilde\alpha^3$\\
$y_4$&&&&&&&&1&$p$\\
$y_5$&&&&1&1&&1&&$1$\\
$y_6$&&1&$-1$&&&&&&$0$\\
$y_7$&&&&&$1$&$-1$&&&$0$\\
$y_8$&&&&&&$1$&$-1$&&$0$\\
$y_9$&&&&&&&$1$&$-1$&$0$\\
\hline
&$q^3$&$pq^2$&$pq^2$&$pq^2$&$p^2q$&$p^2q$&$p^2q$&$p^3$
\end{tabular}
\end{table}
We distinguish the following two subcases.

\subsubsection{Subcase $\frac25\leq p\leq 0.424803$}
A feasible solution is given by
$y_1=q^3$,
$y_2=2pq^2$,
$y_3=y_6=pq^2$,
$y_4=p^2(3-2p)$,
$y_5=0$,
$y_7=p^2q$,
$y_8=2p^2 q$,
$y_9=3p^2 q$,
and the corresponding value of the objective function is
\begin{align*}
&\tilde\alpha^7 q^3 + 2\tilde\alpha^4 pq^2+ 
\tilde a_5 pq^2+p^3(3-2p)\\
&\qquad\approx 
-27.5644 p^{10}+104.919p^9-157.051 p^8+132.065p^7-82.6061 p^6+39.2544p^5\\
&\qquad\qquad -7.70344 p^4-6.68845p^3+8.19629 p^2-1.82104p-7.41682\cdot 10^{-6}\\
&\qquad<4p^3q+p^4-0.004322.
\end{align*}
Thus $\mu_p(\FF)<4p^3q+p^4-0.004$.

\subsubsection{Subcase $0.424803\leq p\leq\frac12$}
A feasible solution is given by
$y_1=q^3$,
$y_2=2pq^2$,
$y_3=pq(1-2p)$,
$y_4=p^2$,
$y_5=y_8=y_9=p^2q$,
$y_6=pq^2$,
$y_7=0$,
and the corresponding value of the objective function is
\begin{align*}
&\tilde\alpha^7 q^3 + 2\tilde\alpha^4 pq^2
+ \tilde a_5 pq(1-2p)+p^2\\
&\qquad\approx 
-27.5644 p^{10}+104.919p^9-157.051 p^8+132.065p^7-82.6061 p^6+39.2544p^5\\
&\qquad\qquad
-1.05572 p^4-16.16p^3+11.0202 p^2-1.82104p-7.41682\cdot 10^{-6}\\
&\qquad<4p^3q+p^4-0.004322.
\end{align*}
Thus $\mu_p(\FF)<4p^3q+p^4-0.004$.

\medskip
This completes the proof of Theorem~\ref{thm:main}. \qed

\section{Concluding remarks}

In this section we discuss possible extensions and related problems.
\subsection{Non-trivial $r$-wise intersecting families for $r\geq 4$}
We have determined $M_2(p)$ and $M_3(p)$ for all $p$. 
Let us consider $M_r(p)$ for the general case $r\geq 4$.
Some of the facts we used for the cases $r=2,3$ can be easily extended 
for the other cases as follows.
\begin{prop}\label{prop:M_r(p)}
Let $r\geq 2$.
\begin{enumerate}
 \item For $s=0,1,\ldots,r-1$ we have 
$M_r(p)\geq p^s$ for $p>\frac{r-s-1}{r-s}$.
\item $M_r(p)=p^{r-1}$ for $0<p\leq\frac1r$.
\item $M_r(p)=p$ for $\frac{r-2}{r-1}<p\leq\frac{r-1}r$.
\item $M_r(p)=1$ for $\frac{r-1}{r}<p<1$.
\end{enumerate} 
\end{prop}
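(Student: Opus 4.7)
The plan is to establish (1) by explicit constructions generalizing the families $\FF_1$, $\FF_2$, $\FF_3$ appearing in the proof of Theorem~\ref{thm2}, and then to derive the matching upper bounds in (2)--(4) using machinery already recalled in Section~\ref{prelim}. Item (4) is immediate from the trivial bound $M_r(p)\le 1$ together with (1) applied with $s=0$.

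For (1), fix $s\in\{0,1,\ldots,r-1\}$ and consider the family
\[
\FF_s := \Big\{F\subset[n] : [s]\subset F,\ |F\cap[s+1,n]|>\tfrac{r-s-1}{r-s}(n-s)\Big\}\cup\{[n]\setminus\{i\} : i\in[s]\},
\]
where the second union is empty when $s=0$. To verify that $\FF_s$ is $r$-wise intersecting, take any $r$ of its members and let $k\le s$ be the number that are of the exceptional form $[n]\setminus\{i\}$. When $k<s$, some $j\in[s]$ belongs to every chosen set. When $k=s$, the exceptional sets jointly contribute $[n]\setminus[s]=[s+1,n]$, and each of the remaining $r-s$ regular sets misses fewer than $(n-s)/(r-s)$ elements of $[s+1,n]$, so the union of these complements has size strictly less than $n-s$ and a common element in $[s+1,n]$ exists. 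Non-triviality is clear since each $i\in[s]$ is missing from some exceptional set and, for each $j>s$, the set $[s]\cup([s+1,n]\setminus\{j\})$ is a regular set omitting $j$ as soon as $n>r$. The $p$-measure of the regular part equals $p^s\cdot\P(\mathrm{Bin}(n-s,p)>\tfrac{r-s-1}{r-s}(n-s))$, which tends to $p^s$ as $n\to\infty$ whenever $p>(r-s-1)/(r-s)$ by the law of large numbers; the contribution of the (at most $s$) exceptional sets is $O(p^{n-1})$ and negligible in the limit.

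For (2), I would iterate the observation recorded just before Theorem~\ref{thm:EKL}: a non-trivial $r$-wise $t$-intersecting family is automatically $(r-1)$-wise $(t+1)$-intersecting. Since the starting family satisfies $\bigcap\FF=\emptyset$, the inequality $|\bigcap\FF|<t+1$ survives at every stage, so after $r-2$ iterations $\FF$ is $2$-wise $(r-1)$-intersecting. Theorem~\ref{thm:AK} with $t=r-1$ and $i=0$ then yields $\mu_p(\FF)\le p^{r-1}$ on the range $p\le 1/r$ (the two candidate AK extremals have equal measure at the endpoint $p=1/r$, as one checks by direct computation), matching the lower bound from (1) with $s=r-1$. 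For (3), the lower bound is (1) with $s=1$, and the upper bound $M_r(p)\le p$ for $p\le(r-1)/r$ is the classical bound on $r$-wise intersecting families cited as \cite{FGL,FT2003,T2021} in the proof of Theorem~\ref{thm2}; the analogous statement for general $r$ is known in that literature. The only mildly delicate step in the whole argument is the case split $k<s$ versus $k=s$ in the $r$-wise intersection verification for $\FF_s$; the main obstacle, if any, is confirming that the $\mu_p\le p$ bound needed for (3) is stated in the literature uniformly for all $r\ge 2$, rather than case by case.
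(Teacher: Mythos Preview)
Your proposal is correct and follows essentially the same approach as the paper: the same construction $\FF_s$ (the paper calls it $\FF_r(n,s)$) for item (1), the reduction to the Ahlswede--Khachatrian bound via the ``non-trivial $r$-wise $t$-intersecting $\Rightarrow$ $(r-1)$-wise $(t+1)$-intersecting'' observation for item (2), the citation of \cite{FGL,FT2003,T2021} for the upper bound in item (3), and the trivial bound for item (4). You supply more detail than the paper does---in particular, the explicit verification that $\FF_s$ is $r$-wise intersecting and non-trivial, and the remark that non-triviality persists through the iteration in (2)---but the underlying ideas coincide.
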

\begin{proof}
Item (1): We construct a non-trivial $r$-wise intersecting family
\[
\FF_r(n,s):=\{\{[s]\cup G:G\subset[s+1,n],\,
|G|\geq\tfrac{r-s-1}{r-s}n\}\}\cup\{[n]\setminus\{i\}:1\leq i\leq s\}.
\]
Then $\mu_p(\FF_r(n,s))\to p^s$ as $n\to\infty$, cf.~\cite{FT2006}.

Item (2): By item (1) with $s=r-1$ we have $M_r(p)\geq p^{r-1}$.
On the other hand, a non-trivial $r$-wise intersecting family is 
$2$-wise $(r-1)$-intersecting, and by Theorem~\ref{thm:AK} the $p$-measure
of the family is at most $p^{r-1}$ if $p\leq\frac1r$.

Item (3): By item (1) with $s=1$ we have $M_r(p)\geq p$.
On the other hand, it is known from \cite{FGL,FT2003,T2021} that
$r$-wise intersecting family has $p$-measure at most $p$ if 
$p\leq\frac{r-1}r$.

Item (4): By item (1) with $s=0$ we have $M_r(p)\geq 1$, and so 
$M_r(p)=1$ by definition of $M_r(p)$.
\end{proof}

Even for the case $r=4$ the exact value of $M_4(p)$ is not known for
$\frac14<p\leq\frac23$. In this case Proposition~\ref{prop:M_r(p)} and 
Theorem~\ref{thm:BD} give us the following.
For simplicity here we write 
$\bd_r(p)=\lim_{n\to\infty}\mu_p(\BD_r(n))$ and 
$f_r(p,s)=\lim_{n\to\infty}\mu_p(\FF_r(n,s))$.

\begin{fact}\label{fact 4-wise}
For non-trivial $4$-wise intersecting families we have the following:
\[
M_4(p)\begin{cases}
=f_4(p,3)=
p^3 & \text{ if }0<p\leq\frac14,\\
\geq \bd_4(p) = 5p^4q+p^5
& \text{ if }\frac14\leq p<\frac12,\\
=\bd_4(p) = 5p^4q+p^5 & \text{ if }p=\frac12,\\
\geq f_4(p,2)=
p^2 &\text{ if }\frac12<p\leq\frac{1+\sqrt{17}}8,\\
\geq \bd_4(p) = 5p^4q+p^5
&\text{ if }\frac{1+\sqrt{17}}8<p\leq\frac23,\\
=f_4(p,1)=p &\text{ if }\frac23<p\leq\frac34,\\
=1 &\text{ if }\frac34<p<1.
\end{cases}
\]
\end{fact}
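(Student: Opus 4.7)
My plan is to treat each of the seven rows separately by pairing an explicit construction (for the lower bound) with an existing result from the paper (for the upper bound in the equality rows).

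First I would collect all the lower bounds. Every ``$f_4(p,s)$'' entry comes from Proposition~\ref{prop:M_r(p)}(1): the family $\FF_4(n,s)$ is non-trivial $4$-wise intersecting whenever $p > (3-s)/(4-s)$ and has limiting $p$-measure $p^s$, so taking $s = 3, 2, 1, 0$ delivers the first, fourth, sixth, and seventh rows. Every ``$\bd_4(p)$'' entry comes from the Brace--Daykin family $\BD_4(n) = \{F \subset [n] : |F \cap [5]| \geq 4\}$. I would first verify that it is non-trivial $4$-wise intersecting: among any four members each omits at most one element of $[5]$, so the union of omitted elements has size at most $4$ and at least one element of $[5]$ is common to all four; non-triviality follows because the five members $[5]\setminus\{i\}$, $i \in [5]$, already have empty intersection. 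A direct count gives $\mu_p(\BD_4(n)) = \binom{5}{4} p^4 q + p^5 = 5p^4q+p^5$. The threshold $(1+\sqrt{17})/8$ separating rows four and five is the crossover of the two competing lower bounds: solving $p^2 = 5p^4q + p^5$ yields $4p^3 - 5p^2 + 1 = (p-1)(4p^2 - p - 1) = 0$, whose unique root in $(\tfrac12, 1)$ is $(1+\sqrt{17})/8$.

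Next I would dispatch the equality rows. The last two, $M_4(p) = p$ on $(\tfrac23, \tfrac34]$ and $M_4(p) = 1$ on $(\tfrac34, 1)$, are immediate from Proposition~\ref{prop:M_r(p)}(3) and (4). The point equality at $p = \tfrac12$ is Theorem~\ref{thm:BD} with $r = 4$, which gives $M_4(n, \tfrac12) = \mu_{1/2}(\BD_4(n))$ and hence $M_4(\tfrac12) = \bd_4(\tfrac12)$ on letting $n \to \infty$. For the first row I would invoke the paper's observation that a non-trivial $r$-wise $t$-intersecting family is automatically $(r-1)$-wise $(t+1)$-intersecting: iterating once turns a non-trivial $4$-wise intersecting family into a $2$-wise $3$-intersecting one, so Theorem~\ref{thm:AK} with $t = 3$ gives $M_4(n, p) \leq \AK(n, 3, p)$. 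In the $i = 0$ block of AK, namely $0 < p \leq \tfrac14$, this bound equals $p^3$ and matches the construction $\FF_4(n, 3)$.

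The main obstacle is the first row on the subinterval $(\tfrac14, \tfrac13]$: there Theorem~\ref{thm:AK} switches to its $i = 1$ block and gives $\AK(n, 3, p) = 5 p^4 q + p^5 = \bd_4(p)$, which is matched by $\BD_4(n)$ and strictly exceeds $p^3$. So the literal equality $M_4(p) = p^3$ is sharp only on $(0, \tfrac14]$, while on $[\tfrac14, \tfrac12]$ the correct value---proved by the same AK bound combined with $\BD_4(n)$---is $\bd_4(p)$. I would therefore read the first two rows as $M_4(p) = p^3$ on $(0, \tfrac14]$ and $M_4(p) = \bd_4(p)$ on $[\tfrac14, \tfrac12]$, and I expect that upgrading the remaining ``$\geq$'' rows on $(\tfrac12, \tfrac23]$ to equalities would require an $r = 4$ analogue of Theorem~\ref{thm:main}, which the paper explicitly leaves as an open direction.
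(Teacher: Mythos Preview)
Your row-by-row approach is exactly what the paper intends: it presents the Fact as an immediate consequence of Proposition~\ref{prop:M_r(p)} and Theorem~\ref{thm:BD}, and your derivation of the lower bounds (including the crossover at $(1+\sqrt{17})/8$) and of the equalities in rows 3, 6, 7 matches that. You are also right to flag the first row: Proposition~\ref{prop:M_r(p)}(2) only gives $M_4(p)=p^3$ on $(0,\tfrac14]$, and since $\BD_4(n)$ is non-trivial $4$-wise intersecting with $\mu_p(\BD_4(n))=5p^4q+p^5>p^3$ for $p>\tfrac14$, the printed equality $M_4(p)=p^3$ on $(\tfrac14,\tfrac13]$ cannot hold.

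Your proposed fix, however, overreaches. You claim that the Ahlswede--Khachatrian bound combined with $\BD_4(n)$ proves $M_4(p)=\bd_4(p)$ on all of $[\tfrac14,\tfrac12]$. It does not. In Theorem~\ref{thm:AK} with $t=3$, the $i=1$ block---where the extremal family is $\AA(n,3,1)=\BD_4(n)$ and the bound equals $\bd_4(p)$---covers only $\tfrac14\leq p\leq\tfrac13$. For $p>\tfrac13$ the theorem moves to $i\geq 2$, so $\AK(n,3,p)=\mu_p(\AA(n,3,i))>\bd_4(p)$ strictly; and those families $\AA(n,3,i)$ with $i\geq 2$ are not $4$-wise intersecting (four members each omitting two elements of $[7]$ can cover all of $[7]$), so they give no matching construction. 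Thus the AK argument yields $M_4(p)=\bd_4(p)$ only on $[\tfrac14,\tfrac13]$; on $(\tfrac13,\tfrac12)$ the equality is genuinely open and is precisely the $r=4$ case of the conjecture stated just after the Fact. (A minor slip: passing from non-trivial $4$-wise intersecting to $2$-wise $3$-intersecting requires applying the $(r,t)\mapsto(r-1,t+1)$ step twice, not once; the iteration is valid because $\bigcap\FF=\emptyset$ keeps the family non-trivial at each stage.)
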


\begin{conj}
For $r\geq 2$ it holds that
$M_r(p)=\bd_r(p)$ for $\frac1r\leq p\leq\frac12$.
\end{conj}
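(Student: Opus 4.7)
The plan is to extend the strategy of Theorem~\ref{thm:main} to arbitrary $r\geq 4$. Since $\BD_r(n)$ itself is a non-trivial $r$-wise intersecting family, the lower bound $M_r(p)\geq\bd_r(p)$ is automatic, so only the upper bound needs proof. I would first dispose of the easy subrange $\tfrac{1}{r}\leq p\leq\tfrac{2}{r+2}$: any non-trivial $r$-wise intersecting family is $2$-wise $(r-1)$-intersecting, and one checks $\BD_r(n)=\AA(n,r-1,1)$, so Theorem~\ref{thm:AK} with $t=r-1$, $i=1$ yields $M_r(n,p)\leq\mu_p(\BD_r(n))$ immediately. For $r=3$ this is exactly the range $[\tfrac{1}{3},\tfrac{2}{5}]$ treated in the proof of Theorem~\ref{thm:M3(n,p)}, and the remaining interval is handled by Theorem~\ref{thm:main}; for general $r\geq 4$ the residual subrange $(\tfrac{2}{r+2},\tfrac{1}{2}]$ is where all the real work lies.

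On this residual interval I would follow Section~5 very closely. Assume $\FF\subset 2^{[n]}$ is shifted, inclusion and size maximal, and $\FF\not\subset\BD_r(n)$. Decompose $\FF=\bigsqcup_{I\subset[r]}\FF_I$ by the trace on $[r]$, set $\GG_I=\{F\setminus[r]:F\in\FF_I\}\subset 2^{[r+1,n]}$ and $x_I=\mu_p(\GG_I)$; then $\mu_p(\FF)=\sum_{I\subset[r]} p^{|I|}q^{r-|I|}x_I$. A direct generalisation of Claim~\ref{M} shows that whenever $I_1,\ldots,I_r\subset[r]$ satisfy $I_1\cap\cdots\cap I_r=\emptyset$, the families $\GG_{I_1},\ldots,\GG_{I_r}$ are $r$-cross $\tau$-intersecting with $\tau=r^2-\sum_i|I_i|$. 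Combining these with the $r$-wise random-walk bound (Lemma~\ref{la2} extended to $r$ terms), with Theorem~\ref{thm4}, and with piecewise-linearisations of $\alpha(p)$ and $f_t(p)$ via Claim~\ref{claim:piecewiselinear}, one obtains a large system of linear constraints on the $x_I$. The desired upper bound $\mu_p(\FF)\leq\mu_p(\BD_r(n))$ then becomes an LP whose dual one hopes to bound by exhibiting explicit feasible vectors in the spirit of the tables in Sections~5.1--5.5.

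The main obstacle is combinatorial explosion. The LP has $2^r$ primal variables, the number of relevant cross-intersection constraints is of order $\binom{2^r}{r}$, and the case split according to which $\GG_I$ are empty (which already occupies five subsections for $r=3$) multiplies further. Moreover, the dual vectors produced for $r=3$ have no apparent closed form in $r$, so a brute-force approach would have to be redone for each $r$. The most promising route I see is induction on $r$: in each subcase, fixing which $\GG_I$ are non-empty, the cross-intersection conditions should force certain collections of $\GG_J$'s to be $(r-1)$-wise intersecting families on $[r+1,n]$, at which point the inductive bound $M_{r-1}(n-r,p)\leq\bd_{r-1}(p)$ can replace the crude bounds of Lemma~\ref{lemma:2c1i} and Lemma~\ref{lemma:3w1i<=1}. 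A parallel ingredient one would also need is an $r$-wise stability strengthening of Theorem~\ref{thm4} supplying an $\epsilon_p$-gap under non-triviality (the analogue of \eqref{ineq<3p-e}), and producing such a gap uniformly in $r$ and $p$ across $(\tfrac{2}{r+2},\tfrac{1}{2}]$ is where I expect essentially all the difficulty to concentrate.
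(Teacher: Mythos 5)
The statement you are addressing is not a theorem of the paper but an open conjecture: the paper proves the case $r=3$ (Theorems~\ref{thm2}, \ref{thm:M3(n,p)}, \ref{thm:main}) and explicitly leaves $r\geq 4$ unresolved, so there is no proof of record to compare yours against. What you have written is a research programme, not a proof, and by your own account the decisive ingredients are missing. Concretely: (i) the only part you actually establish is the subrange $\frac1r\leq p\leq\frac2{r+2}$ via Theorem~\ref{thm:AK} (correct, since a non-trivial $r$-wise intersecting family has empty total intersection and is therefore $2$-wise $(r-1)$-intersecting, and $\BD_r(n)=\AA(n,r-1,1)$), but this interval shrinks to a point as $r$ grows, so essentially the whole conjecture lies in the residual range $(\frac2{r+2},\frac12]$; (ii) on that range you propose to mimic Section~5, but the paper's argument is not a template that scales: it rests on an explicit, hand-built dual feasible certificate for each of five emptiness cases, on numerically verified polynomial inequalities valid only on $[\frac25,\frac12]$, and on the $r=3$ stability bound \eqref{ineq<3p-e}, none of which you supply for general $r$; (iii) the proposed induction on $r$ is only an aspiration --- you do not verify that fixing which $\GG_I$ are empty actually produces $(r-1)$-wise intersecting families that are themselves \emph{non-trivial} (which is what the inductive hypothesis $M_{r-1}(p)=\bd_{r-1}(p)$ would require), nor that the resulting bounds are strong enough to close the LP; and (iv) the needed analogue of \eqref{ineq<3p-e}, an $\epsilon_p$-gap for non-empty $r$-cross intersecting families with empty common intersection, is not proved and does not follow from Theorem~\ref{thm4}, which has no stability content.

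So the gap is not a local slip but the absence of the core argument: no constraint system is written down for general $r$, no dual feasible solutions are exhibited, and no mechanism is given that produces them uniformly in $r$ and $p$. As a plan it is sensible and consistent with the paper's method (and your observation that known results for $r\leq 5$ only beat $\BD_r(n)$ for $p>\frac12$ means the conjecture is at least not contradicted on the closed interval up to $\frac12$), but as it stands it proves the conjecture only on $[\frac1r,\frac2{r+2}]$ and leaves the substantive range open.
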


It is known that $M_r(p)=\bd_r(p)$ if $r\geq 13$ and
$\frac12\leq p\leq \frac12+\epsilon_r$ for some $\epsilon_r>0$, see
\cite{FT2006}. 
Note also that $M_5(p)\geq f_5(p,3)>\bd_5(p)$ for 
$\frac12<p<\frac{1+\sqrt{21}}{10}$.

\begin{prob}
Let $0<p\leq\frac{r-1}r$, and  
let $\FF\subset 2^{[n]}$ be a non-trivial $r$-wise intersecting family.
Is it true that 
\[
M_r(p)\leq\max\{\bd_r(p),\,
f_r(p,1),\ldots, f_r(p,r-1)\}\,?
\]
\end{prob}

\subsection{Uniform families}
One can consider non-trivial $r$-wise intersecting $k$-uniform families,
that is, families in $\binom{[n]}k:=\{F\subset[n]:|F|=k\}$, and ask the 
maximum size. Let us construct some candidate families to address this 
problem. For $1\leq s\leq r-1$ and $r-s+1\leq y\leq k-s+1$,
let $j_0:=\lceil\frac{(r-s-1)y+1}{r-s}\rceil$.
Note that $j_0<y$ and $j_0$ is the minimum integer $j$ satisfying 
$(r-s)j\geq(r-s-1)y+1$.
Let $\FF_r(n,k,s,y):=\AA\cup\BB$, where
\begin{align*}
\AA&:=\{A\in\tbinom{[n]}k:[s]\subset A,\,|A\cap[s+1,s+y]|\geq j_0\},\\
\BB&:=\{B\in\tbinom{[n]}k:|B\cap[s]|=s-1,\,[s+1,s+y]\subset B\}.
\end{align*}
Then the family $\{A\setminus[s]:A\in\AA\}$ is $(r-s)$-wise intersecting 
due to the choice of $j_0$ and $y$.
Thus $\FF_r(n,k,s,y)$ is a $k$-uniform non-trivial $r$-wise intersecting
family. In particular, 
\[
\FF_r(n,k,1,r)=\BD_r(n)\cap\tbinom{[n]}k\quad (j_0=r-1), 
\]
and
$\FF_r(n,k,r-1,k-r+2)$ ($j_0=1$) is the so-called Hilton--Milner family.
Note that different parameters may give the same family, e.g.,
$\FF_r(n,k,1,r)=\FF_r(n,k,s,r-s+1)$ for all $1\leq s\leq r-1$.
\begin{conj}[O'Neill and Verstr\"aete \cite{OV}]\label{ON-V conj}
Let $k>r\geq 2$ and $n\geq kr/(r-1)$. Then the unique extremal non-trivial 
$r$-wise intersecting families in $\binom{[n]}k$ are 
$\FF_r(n,k,1,r)$ and $\FF_r(n,k,r-1,k-r+2)$ (up to isomorphism).
\end{conj}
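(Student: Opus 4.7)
The plan is to follow the template the paper has already set up for Theorem~\ref{thm:main}: shift, decompose, and dualize. First I would apply shifting so that $\FF\subset\binom{[n]}{k}$ may be assumed shifted and inclusion-maximal among non-trivial $r$-wise intersecting families; the size is preserved and both structural properties (the $r$-wise intersection and the non-triviality) are inherited as in Section~\ref{prelim}. Since any non-trivial $r$-wise intersecting family is automatically $2$-wise $(r-1)$-intersecting, the Ahlswede--Khachatrian theorem (Theorem~\ref{thm:AK}) gives a ceiling: for $n\geq kr(r-1)$, the parameter $p=k/n$ is so small that the $k$-uniform AK-extremal collapses to $\AA(n,r-1,0)=\{F:[r-1]\subset F\}$, which is trivial. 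Hence an extremal non-trivial family must come from a small modification of $\AA(n,r-1,0)$, or from the next AK-family $\AA(n,r-1,1)=\BD_r(n,k)$.

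Second, I would decompose $\FF$ exactly as in Section~5: for $I\subset[r]$ set $\FF_I=\{F\in\FF:F\cap[r]=I\}$ and $\GG_I=\{F\setminus[r]:F\in\FF_I\}\subset\binom{[r+1,n]}{k-|I|}$. Using shiftedness together with Claim~\ref{M} (verbatim as stated for the measure version), one gets a family of cross-intersection constraints linking the $\GG_I$, plus the poset relations $\GG_I\subset\GG_J$ whenever $I\leadsto J$. Introducing normalized sizes $x_I=|\GG_I|/\binom{n-r}{k-|I|}$, these become piecewise-linear constraints on the $x_I$, and $|\FF|/\binom{n}{k}$ is expressed (up to the hypergeometric weights $\binom{r}{|I|}\binom{n-r}{k-|I|}/\binom{n}{k}$) as a linear functional to be maximized. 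This reduces the problem to an LP with parameter $k/n$, to which one can apply weak duality (Theorem~\ref{thm:weak duality}) to bound $|\FF|$.

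Third, the uniqueness part requires a stability statement: whenever the LP optimum is attained, the primal slack variables force the $\GG_I$ to coincide with the normalized profiles of either $\BD_r(n,k)$ or $\FF_r(n,k,r-1)$. By Lemma~\ref{shift-stable}, this would already give $\FF\cong\BD_r(n,k)$ or $\FF\cong\FF_r(n,k,r-1)$. The role of the hypothesis $n\geq kr(r-1)$ is to guarantee that the coefficients in the LP place the optimum at one of these two vertices rather than somewhere in the interior.

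The main obstacle is precisely the last step, and it is why I am skeptical of the conjecture in the form stated: the bound $n\geq kr(r-1)$ does not obviously force the LP optimum to occur at a vertex, and intermediate families of the form $\FF_r(n,k,s)$ with $1\leq s<r-1$ (or more exotic hybrids combining a shifted $\BD_r$-like block on a small initial segment with an $\FF_r(n,k,r-1)$-like tail) are natural candidates to tie or beat the two conjectured extremals. I would therefore carry out the LP analysis with special attention to the regime where $|\BD_r(n,k)|$ and $|\FF_r(n,k,r-1)|$ are comparable; if the dual fails to certify optimality there, the slack pattern typically points directly to a counterexample family — which is consistent with the author's announced refutation of this conjecture in the introduction.
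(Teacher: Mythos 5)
There is a fundamental mismatch here: the statement you were asked about is not a theorem of the paper at all, and the paper contains no proof of it. It is the conjecture of O'Neill and Verstr\"aete, quoted in the concluding section precisely in order to be \emph{refuted}. The paper's contribution on this point is a counterexample, not a proof: guided by Theorem~\ref{thm2} (which gives $M_3(p)=p$ for $\frac12<p\leq\frac23$, strictly above $\mu_p(\BD_3(n))=4p^3q+p^4$ in that range), the author observes that the family $\FF_3(n,k,1)$ beats both conjectured extremals when $k/n$ lies roughly in $(\frac12,\frac23]$, and records the explicit instance $n=120$, $63\leq k\leq 90$, where $|\FF_3(120,k,1)|>\max\{|\BD_3(120,k)|,|\FF_3(120,k,2)|\}$. (The printed hypothesis ``$n\geq kr(r-1)$'' should be read as the natural threshold $n\geq kr/(r-1)$, i.e.\ $k\leq\frac{r-1}{r}n$, below which $r$-wise intersection is not automatic; the counterexamples fall squarely inside that range.) So no proof strategy can succeed, and your proposal, which is an outline of an attempted proof, necessarily has a gap: the intended conclusion is false.

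Concretely, the gap appears already in your first step. You argue that for the stated range of $n$ the parameter $p=k/n$ is small, so the Ahlswede--Khachatrian ceiling collapses to the trivial family and only $\BD_r(n,k)$ and a small modification of $\AA(n,r-1,0)$ survive. But the regime where the conjecture actually fails is $k/n\in(\frac12,\frac23]$, where $p$ is not small, the AK bound for $2$-wise $(r-1)$-intersecting families no longer pins the optimum to $\AA(n,r-1,i)$ with $i\in\{0,1\}$, and the family $\FF_r(n,k,1)$ (a principal-type family on one vertex plus the complementary set, with a large-uniformity core) overtakes both conjectured extremals. Your LP/dual machinery, modeled on Section~5, could at best certify the bound in the regime where it is true; it cannot deliver the claimed uniqueness statement globally. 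To your credit, your final paragraph anticipates exactly this failure mode and correctly identifies intermediate families $\FF_r(n,k,s)$ with $1\leq s<r-1$ as the likely culprits --- that is precisely the paper's counterexample, with $s=1$ for $r=3$ --- but as a proof of the statement the proposal cannot be repaired, because the statement is false.
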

O'Neill and Verstr\"aete proved the conjecture if 
$n\geq r+e(k^22^k)^{2^k}(k-r)$. This bound can be reduced to
$n>(1+\frac r2)(k-r+2)$ using 
the Ahlswede--Khachatrian
theorem for non-trivial 2-wise $t$-intersecting families in \cite{AK2}
with the fact that an $r$-wise intersecting family is $2$-wise 
$(r-1)$-intersecting, see \cite{BL} for more details.
The case $r=2$ in the conjecture is known to be true as the Hilton--Milner 
theorem \cite{HM}.
The case $r=3$ is studied in \cite{T2023}, and a $k$-uniform 
version of Theorem~\ref{thm2} is obtained, from which it follows that the 
conjecture fails if $n$ and $k$ are sufficiently large and roughly 
$\frac12<\frac kn\leq\frac 23$. In this case 
$\FF_3(n,k,1,k-1)$ or $\FF_3(n,k,1,k)$ has size larger than 
$\FF_3(n,k,1,3)$ and $\FF_3(n,k,2,k-1)$ (see Theorem~4 in \cite{T2023}).
Balogh and Linz \cite{BL} verified that 
$\FF_3(11,7,1,7)$ is indeed a counterexample to the conjecture. 
They constructed the families $\FF_r(n,k,1,(r-1)i+1)$
($i\leq\lfloor\frac{k-1}{r-1}\rfloor$), and suggested that the largest
family of them could be a counterexample if $n\approx kr/(r-1)$.
Here we show that Conjecture~\ref{ON-V conj} fails if $r\geq 3$, 
and $n$ and $k$ are sufficiently large and $k/n$ is roughly in 
$(\frac{r-2}{r-1},\frac{r-1}r)$. More precisely we prove the following.
Let $M_r(n,k)$ denote the maximum size of a non-trivial $r$-wise 
intersecting family in $\binom{[n]}k$.
\begin{thm}\label{thm:counterexample}
Let $r\geq 3$.
For every $\epsilon>0$ and every $\delta>0$,
there exists $n_0\in\N$ such that for all integers $n$ and $k$ with
$n>n_0$ and $\frac{r-2}{r-1}+\epsilon<\frac kn<\frac{r-1}r-\epsilon$, 
we have
\[
 (1-\delta)\binom{n-1}{k-1}\leq M_r(n,k)<\binom{n-1}{k-1}.
\]
\end{thm}
Before proving this result, let us check that it gives counterexamples 
to the conjecture. To this end, suppose that $\frac kn=p$, and $n$ and $k$ 
are sufficiently large. Then we have 
\begin{align*}
&|\FF_r(n,k,r,1)|=(r+1)\binom{n-r-1}{k-r}+\binom{n-r-1}{k-r-1}
\approx\left((r+1)p^{r-1}q+p^r\right) \binom nk,\\
&|\FF_r(n,k,r-1,k-r+2)|=\binom{n-r+1}{k-r+1}-\binom{n-k-1}{k-r+1}+r-1
\approx p^{r-1} \binom nk.
\end{align*}
If $p>\frac 1r$ then $(r+1)p^{r-1}q+p^r>p^{r-1}$.
Indeed if $k>r$ and $n\leq r(k-r+2)$, then 
$|\FF_r(n,k,r,1)|>|\FF_r(n,k,r-1,k-r+2)|$.
If moreover $p=\frac kn\leq\frac{r-1}r$, then
\[
\lim_{n,k\to\infty} |\FF_r(n,k,r,1)|/\binom{n-1}{k-1}
=(r+1)p^rq+p^r\leq\frac 89,
\]
where equality holds if and only if $r=3$ and $p=\frac 23$.
This implies that under the assumptions in Theorem~\ref{thm:counterexample}
we have 
$\max\{|\FF_r(n,k,1,r)|,\,|\FF_r(n,k,r-1,k-r+2)|\}<(1-\delta)\binom{n-1}{k-1}$ for $0<\delta<\frac19$.

\begin{proof}[Proof of Theorem~\ref{thm:counterexample}]
The upper bound $M_r(n,k)<\binom{n-1}{k-1}$ was proved by Frankl in 
\cite{Fshift}.

We prove the lower bound.
Let $r$ be fixed, and let $\epsilon>0$ and $\delta>0$ be given.
Let $\frac{r-2}{r-1}<p<\frac{r-1}r$, and $k=pn$. 
Let $c>0$ be a constant depending on $r$ only (specified later), and let
\[
 J_{n,p}=\{j\in\N:|j-p^2n|\leq c\sqrt{n}\}.
\]
For $j\in J_{n,p}$ let
\[
 \theta_j(n,p)=\frac{\binom {pn}j\binom{n-pn}{pn-j}}{\binom n{pn}}
=\frac{\binom kj\binom{n-k}{k-j}}{\binom nk}.
\]
Let $\erf(z)$ denote the error function, that is,
\[
 \erf(z)=\frac 2{\sqrt\pi}\int_0^z\exp(-x^2)\,dx.
\]
Then, by Lemma~5 of \cite{T2023}, we have
\[
 \lim_{n\to\infty}\sum_{j\in J_{n,p}}\theta_j(n,p)=\erf\left(\frac{3c}{\sqrt{2}p}\right).
\]
The RHS is a function decreasing in $p$ (for fixed $c$), and we have
\[
\min_{p\in[\frac{r-2}{r-1},\frac{r-1}r]}
\erf\left(\frac{3c}{\sqrt 2 p}\right)
=\erf\left(\frac{3rc}{\sqrt 2(r-1)}\right).
\]
Then the RHS is a function increasing in $c$ and approaching $1$.
Thus we can choose $c>0$ so that
$\erf\left(\frac{3rc}{\sqrt 2(r-1)}\right)>1-\frac{\delta}3$,
and we fix $c$.

We will show that $|\FF_r(n,k,1,k)|>(1-\delta)\binom{n-1}{k-1}$.
Let $j_0=\lceil\frac{(r-2)k+1}{r-1}\rceil$.
Choose $n_1$ so that if $n>n_1$ then 
\begin{align}\label{eq:>1-delta}
 \sum_{j\in J_{n,p}}\theta_j(n,p)>1-\frac{\delta}2
\end{align}
for all $p$ with $\frac{r-2}{r-1}\leq p\leq\frac{r-1}r$. Next choose 
$n_2$ so that if $\frac{r-2}{r-1}+\epsilon<p<\frac{r-1}r-\epsilon$,
$n>n_2$, and $k=pn$, 
then $j_0<pk-c\sqrt n$ and $pk+c\sqrt n<k-1$. Then we have
$J_{n,p}\subset[j_0,k-1]$.
Finally choose $n_3$ so that if $n>n_3$ then 
$p-\frac c{q\sqrt n}>(1-\frac{\delta}2)p$, and 
let $n_0:=\max\{n_1,n_2,n_3\}$. 
 
We have
\begin{align*}
 |\FF_r(n,k,1,k)|
\geq\sum_{j=j_0}^{k-1}\binom kj\binom{n-k-1}{k-j-1}
>\sum_{j\in J_{n,p}}\binom kj\binom{n-k-1}{k-j-1}.
\end{align*}
The summands in the RHS is 
$\binom kj\binom{n-k-1}{k-j-1}=\frac{k-j}{n-k}\binom kj\binom{n-k}{k-j}$.
For $j\in J_{n,p}$ we have $j<p^2n+c\sqrt n$ and
\[
\frac{k-j}{n-k}=\frac{p-\frac jn}{1-p}
>\frac1q\left(p-p^2-\frac c{\sqrt n}\right)
=p-\frac{c}{q\sqrt n}
>\left(1-\frac{\delta}2\right)p, 
\]
where we used $n>n_3$ in the last inequality. We then have
\[
\binom kj\binom{n-k-1}{k-j-1}> 
\left(1-\frac{\delta}2\right)p\binom kj\binom{n-k}{k-j}.
\]
The RHS can be rewritten as 
$\left(1-\frac{\delta}2\right) \binom {n-1}{k-1}\theta_j(n,p) $
because $\binom kj\binom{n-k}{k-j}=\theta_j(n,p)\binom nk$ and
$p\binom nk=\binom{n-1}{k-1}$. Finally we have
\begin{align*}
M_r(n,k)&\geq |\FF_r(n,k,1,k)|\\
&>\sum_{j\in J_{n,p}}\binom kj\binom{n-k-1}{k-j-1}\\
&>\left(1-\frac{\delta}2\right) \binom {n-1}{k-1}\sum_{j\in J_{n,p}}
\theta_j(n,p) \\
&>\left(1-\frac{\delta}2\right)^2 \binom {n-1}{k-1} 
\qquad \text{(by \eqref{eq:>1-delta})}\\
&>\left(1-\delta\right) \binom {n-1}{k-1},
\end{align*}
and this is the lower bound we needed.
\end{proof}

Fact~\ref{fact 4-wise} suggests that the conjecture could be false even if
$\frac kn<\frac{r-2}{r-1}$. For example we have
$|\FF_4(41,26,2,25)|>|\FF_4(41,26,1,4)|>|\FF_4(41,26,3,24)|$.
Noting that $\frac{1+\sqrt{17}}8\approx 0.64$ we can expect
$\FF_4(n,k,2,k-1)$ is larger than $\FF_4(n,k,1,4)$ if
$\frac12<\frac kn<0.64$ and $n,k$ sufficiently large. Indeed we have
\begin{align*}
&|\FF_4(1000,514,2,513)|/|\FF_4(1000,514,1,4)|\approx 1.03254,\\
&|\FF_4(1000,630,2,629)|/|\FF_4(1000,630,1,4)|\approx 1.0165,\\
&|\FF_4(1000,650,2,649)|/|\FF_4(1000,650,1,4)|\approx 0.98655.
\end{align*}

\begin{prob}
Let $k>r\geq 2$ and $n\geq kr/(r-1)$, and  
let $\FF\subset\binom{[n]}k$ be a non-trivial $r$-wise intersecting family.
Is it true that 
\[
|\FF|\leq\max\{|\FF_r(n,k,s,y)|:1\leq s\leq r-1,\,r-s+1\leq y\leq k-s+1
\}\,?
\]
\end{prob}

\section*{Acknowledgment}
I thank both referees for their careful reading and many helpful 
suggestions. This research was supported by JSPS KAKENHI 
Grant No.~18K03399 and No.~23K03201.

\end{document}